\newtheorem{Remark}{Remark}[section]
\newtheorem{Corollary}[Remark]{Corollary}
\newtheorem{Definition}[Remark]{Definition}
\newtheorem{Example}[Remark]{Example}
\newtheorem{Fact}[Remark]{Fact}
\newtheorem{Lemma}[Remark]{Lemma}
\newtheorem{Proposition}[Remark]{Proposition}
\newtheorem{Theorem}[Remark]{Theorem}
\newcommand{\ba}{\begin{array}}
\newcommand{\bc}{\begin{center}}
\newcommand{\bd}{\begin{description}}
\newcommand{\bdm}{\begin{displaymath}}
\newcommand{\be}{\begin{enumerate}}
\newcommand{\beq}{\begin{equation}}
\newcommand{\bdf}{\begin{Definition}}
\newcommand{\bex}{\begin{Example}}
\newcommand{\bft}{\begin{Fact}}
\newcommand{\bl}{\begin{Lemma}}
\newcommand{\bp}{\begin{Proposition}}
\newcommand{\br}{\begin{Remark}}
\newcommand{\bt}{\begin{Theorem}}
\newcommand{\bco}{\begin{Corollary}}
\newcommand{\bhy}{\begin{Hypothesis}}
\newcommand{\ea}{\end{array}}
\newcommand{\ec}{\end{center}}
\newcommand{\ed}{\end{description}}
\newcommand{\edm}{\end{displaymath}}
\newcommand{\ee}{\end{enumerate}}
\newcommand{\eeq}{\end{equation}}
\newcommand{\edf}{\end{Definition}}
\newcommand{\eex}{\end{Example}}
\newcommand{\eft}{\end{Fact}}
\newcommand{\el}{\end{Lemma}}
\newcommand{\ep}{\end{Proposition}}
\newcommand{\er}{\end{Remark}}
\newcommand{\et}{\end{Theorem}}
\newcommand{\eco}{\end{Corollary}}
\newcommand{\ehy}{\end{Hypothesis}}
\newcommand{\bH}{\mathbb{H}}
\newcommand{\bI}{\mathbb{I}}
\newcommand{\bN}{\mathbb{N}}
\newcommand{\bR}{\mathbb{R}}
\newcommand{\bV}{\mathbb{V}}
\newcommand{\bW}{\mathbb{W}}
\newcommand{\bX}{\mathbb{X}}
\newcommand{\bY}{\mathbb{Y}}
\newcommand{\bZ}{\mathbb{Z}}
\newcommand{\cC}{\mathcal{C}}
\newcommand{\cL}{\mathcal{L}}
\newcommand{\cN}{\mathcal{N}}
\newcommand{\cR}{\mathcal{R}}
\newcommand{\cT}{\mathcal{T}}
\newcommand{\cV}{\mathcal{V}}
\newcommand{\im}{\mathrm{ im \;}}
\numberwithin{equation}{section} \errorcontextlines=0
\newcommand{\cl}{\mathrm{cl}}
\newcommand{\sone}{S^1}
\newcommand{\ds}{\displaystyle}
\newcommand{\sub}{\overline{\mathrm{sub}}}
\newcommand{\chittwo}{\chi_{T^2}}
\newcommand{\db}{\mathrm{deg}_{\mathrm{B}}}
\newcommand{\dg}{\mathrm{deg}^{\nabla}_G}
\newcommand{\degsone}{\mathrm{deg}^{\nabla}_{S^1}}
\newcommand{\degttwo}{\mathrm{deg}^{\nabla}_{T^2}}
\newcommand{\ib}{\mathrm{i}_{\mathrm{B}}}
\newcommand{\bif}{\mathcal{BIF}}
\newcommand{\bifttwo}{\mathcal{BIF}_{T^2}}
\newcommand{\h}{\mathbb{H}}
\newcommand{\hone}{\mathbb{H}^1_{2\pi}}
\begin{document}

\title[Autonomous $S^1$-symmetric Newtonian systems]{Periodic solutions of autonomous $S^1$-symmetric Newtonian systems}

\author{Anna Go{\l}\c{e}biewska}
\address{Faculty of Mathematics and Computer Science \\ Nicolaus Copernicus University in Toru\'n\\
PL-87-100 Toru\'{n} \\ ul. Chopina $12 \slash 18$ \\
Poland}
\author{S{\l}awomir Rybicki}

\author{Piotr Stefaniak}
\email{aniar@mat.umk.pl (A. Go{\l}\c{e}biewska)}
\email{rybicki@mat.umk.pl (S. Rybicki)}
\email{cstefan@mat.umk.pl (P. Stefaniak)}

%\date{\today}

\keywords{symmetric Newtonian systems, bifurcations of periodic solutions, equivariant bifurcation theory}
\subjclass[2020]{Primary: 37J46, 37G40; Secondary: 37J20, 70K42}

\begin{abstract}
The aim of this paper is to formulate necessary conditions and sufficient ones for the existence of  closed connected sets of nonstationary $2 \pi$-periodic solutions of $\sone$-symmetric Newtonian systems in  $C_{2 \pi}([0,2\pi],\Omega) \times (0,+ \infty)$. As the main topological tool we apply the degree for equivariant gradient maps.
\end{abstract}
\maketitle

\section{Introduction}
 Let us consider a Newtonian system of the form
\beq\label{nsin}
\ddot  u(t)=-\lambda^2 U'(u(t)),\eeq where $\lambda > 0,\Omega \subset \bR^N$ is open, $ U \in C^2(\Omega,\bR)$,  and $ (U')^{-1}(0)$ is finite.

We are interested in finding continua  of nonstationary $2\pi$-pe\-rio\-dic solutions of the system \eqref{nsin} emanating from the set of stationary ones.

The method that can be applied to this problem is to consider some Hamiltonian system corresponding to the system \eqref{nsin}. More precisely, the Newtonian system \eqref{nsin} is equivalent to the following Hamiltonian one
\beq\label{hsin}
\dot  x(t)=\lambda JH'(x(t)),
\eeq
where $\lambda > 0, x=(u,v)$, the Hamiltonian $H \in C^2(\bR^{2N},\bR)$ is defined by $H(u,v) = -\frac{1}{2} |v|^2 - U(u)$ and $J$ is the standard symplectic matrix.
Moreover, it is clear that $u_{0} \in (U')^{-1}(0)$ iff $(u_{0},0) \in (H')^{-1}(0)$.

The continua of nonstationary $2 \pi$-periodic solutions of autonomous Hamiltonian systems of the form \eqref{hsin} (with an arbitrary Hamiltonian $H$ having a finite number of critical points) have been studied by Dancer and the second author, see Theorem 3.3 of \cite{danryb}.
In particular, for any $(u_{0},0) \in (H')^{-1}(0)$ and $\lambda_0 > 0$ there  has been defined a bifurcation index $\ds \eta((u_{0},0),\lambda_0)$, which is an element of the Euler ring $U(\sone)$ of the group $\sone$ (we refer the reader to \cite{TomDieck1979}, \cite{TomDieck} for the definition and properties of the Euler ring of a compact Lie group $G$), see the formulas (3.4) and  (3.5) in \cite{danryb}. The nontriviality  of this index implies  a bifurcation of continua of nonstationary $2 \pi$-periodic  solutions of the system \eqref{hsin} from  $((u_{0},0),\lambda_0)$. Moreover,  bounded continua satisfy the summation formula, see the formula (3.6) in \cite{danryb}.

We emphasize that if the bifurcation index $\ds \eta((u_{0},0),\lambda_0)  \in U(\sone)$ is nontrivial then the Brouwer index of the isolated critical point $(u_{0},0)$ of Hamiltonian $H$, denoted by  $\ib((u_{0},0),H') \in \bZ$,   is nonzero. It is clear that  $\ib((u_{0},0),H')=(-1)^N \ib(u_{0},-U')$. Therefore,  nontriviality of $\ds \eta((u_{0},0),\lambda_0)$ implies $\ib(u_{0},-U') \neq 0$.
Hence, it would be desirable to know sufficient conditions for the existence of continua of nonstationary $2 \pi$-periodic solutions of the system \eqref{nsin} emanating from the stationary solution $(u_{0},\lambda_0)$ satisfying $\ib(u_{0},-U')=0$. These results, in the general setting, are  far from being conclusive. However, if we restrict the considered problem to some special situation, we can use more sophisticated tools. In particular additional symmetries of the system \eqref{nsin} allow us to define a more efficient bifurcation index.

That is why  in this article we consider $\sone$-symmetric Newtonian systems of the form \eqref{nsin}. To be more precise, we consider $\bR^N$ as an orthogonal representation of the group $\sone$, assume that $\Omega \subset \bR^N$ is open and $\sone$-invariant and that the potential $U \in C^2(\Omega,\bR)$ is $\sone$-invariant, see Section \ref{sec:formulation}. Since the potential $U$ is $\sone$-invariant, the gradient $U' \in C^1(\Omega,\bR^N)$ is $\sone$-equivariant. Therefore one can define an index $\mathrm{i}_{\nabla}^{\sone}(u_{0},-U') \in U(\sone)$ by   $\mathrm{i}_{\nabla}^{\sone}(u_{0},-U') = \deg^{\nabla}_{\sone}(-U',\Omega_{0})$, where $ \deg^{\nabla}_{\sone}(\cdot,\cdot)$ is the degree for $\sone$-equivariant gradient maps (see Section \ref{degg} as well as \cite{Geba} and \cite{Ryb2005milano} for the definition and properties of this degree) and $ \Omega_{0} \subset \Omega$ is an open $\sone$-invariant subset such that
$\cl (\Omega_{0}) \cap (U')^{-1}(0) = \{u_{0}\}$.
It is known that if $\ib(u_{0},-U')$  is nonzero then $\mathrm{i}_{\nabla}^{\sone}(u_{0},-U')$ is nontrivial. However, the opposite implication is not true.

The main result of our paper concerns the problem of
 finding necessary conditions and sufficient ones for the existence of continua of nonstationary $2 \pi$-periodic  solutions of the $\sone$-symmetric Newtonian system \eqref{nsin} in $C_{2 \pi}([0,2\pi],\Omega) \times (0,+\infty)$.
This result is formulated in Theorem \ref{thm:main}. The basic idea of the proof of this theorem is to apply the degree for equivariant gradient maps.
It is worth pointing out that our result is also applicable in a case when the results of \cite{danryb} cannot be used i.e. when $\ib(u_{0},-U') = 0$,   see  Lemma \ref{lem:exam} and Remark \ref{rembg}.

Let us emphasize once again  that in our paper we have replaced the Brouwer index $\ib(u_{0},-U') \in \bZ$ used in \cite{danryb} by the index $\mathrm{i}_{\nabla}^{\sone}(u_{0},-U') \in U(\sone)$ defined for the class of $\sone$-equivariant gradient maps.
The choice of the degree for equivariant gradient maps seems to be the best adapted to our theory.

It would be interesting to consider also $\Gamma$-symmetric Newtonian and Hamiltonian systems, where $\Gamma$ is an arbitrary compact Lie group. However, this topic exceeds the scope of this paper.

After this introduction our paper is organized as follows.

In Section \ref{preres} we introduce a notion of an orthogonal representation of a compact Lie group $G$, see Subsection \ref{equivthe}, and discuss some properties of orthogonal representations of the group $\sone$ and the torus $T^2$, see Subsections \ref{repsone}, \ref{reptor}, respectively. Some properties of the Euler ring $U(T^2)$ are discussed in Subsection \ref{sec:Euler}. We finish this section with Subsection \ref{degg} containing some properties of the degree for $G$-equivariant gradient maps, with special emphasis on groups $\sone, T^2$.

 In Section \ref{sec:main} we formulate and prove the main result of this article. More precisely speaking, in Subsection \ref{sec:formulation} we formulate the main result of our paper, see Theorem \ref{thm:main}. In Subsection \ref{sec:var} we define a $T^2$-symmetric $C^2$-functional $\Phi \colon \bH^1_{2\pi} \times (0,+\infty) \to \bR$ whose critical points are in one-to-one correspondence with $2 \pi$-periodic solutions of the $\sone$-symmetric Newtonian system \eqref{nsin}. Subsection \ref{sec:glob} contains a discussion of global bifurcation of $T^2$-orbits of solutions of the equation $\nabla_u \Phi(u,\lambda) = 0$. In Subsection \ref{sec:proofmain} we present a proof of Theorem \ref{thm:main}.

The discussion of the results is presented in Section \ref{frc}. In Subsection \ref{ceqb} we define a particular $\sone$-invariant potential by the formula \eqref{eq:exampU}. We show that this potential satisfies the assumptions of our main theorem, whereas it does not fulfill the assumptions of Theorem 3.3 of  \cite{danryb}. In Subsection \ref{ncco} we look more closely at continua of nonstationary $2\pi$-periodic solutions of the $\sone$-symmetric Newtonian system \eqref{nsin} with the potential $U$ defined by the formula \eqref{eq:exampU}. We prove that all the continua are not compact in $C_{2 \pi}([0,2\pi],\Omega)  \times (0,+\infty)$.
We finish the article with a theorem concerning the existence of non-compact continua, see Theorem \ref{thm:unbounded}.

\section{Preliminary results}
\label{preres}

In this section we recall basic notion that we will use in this article: we briefly outline basics of representation theory, notion of the Euler ring and of the degree for equivariant gradient maps. Additionally, we prove some auxiliary properties of the Euler ring of torus and a characterization of some spaces as representations of torus.

\subsection{Notation and elements of equivariant theory}
\label{equivthe}

We begin by recalling some terminology and facts concerning group actions and group representations. Let $G$ be a compact Lie group and let $e$ be its neutral element.

We say that $G$ acts on a topological space $\bX$  (or $\bX$ is a $G$-space) if there is a continuous map $\mu\colon G \times \bX \to \bX$ such that
\be
\item $\mu(e,x)=x$,
\item  $\mu(g_1,\mu(g_2,x)) = \mu (g_1 g_2,x)$,
\ee
for all $g_1, g_2 \in G, x \in \bX$. In what follows, for simplicity of notation, we write $gx$ instead of $\mu(g,x)$.

For a given $x \in \bX$ the subgroup $G_x=\{g \in G \colon gx=x\}$ is called the isotropy group (or the stabilizer) of $x$ and the set $G(x) = \{gx \colon g \in G\}$ is called the orbit of $x$. A subset $\Omega \subset \bX$ is said to be $G$-invariant, if $gx \in \Omega$ for all $g \in G, x \in \Omega$ i.e. $G(x) \subset \Omega$ for any $x \in \Omega$.

Let $\bX, \bY$ be two $G$-spaces. We say that a continuous map $f\colon \bX \to \bY$ is $G$-equivariant (or $f$ is a $G$-map) if $f(gx)=gf(x)$ for all $g \in G, x \in \bX$. If $\bY$ is a trivial $G$-space, i.e. $gy=y$ for all $g \in G, y \in \bY$, then a $G$-equivariant map $f\colon \bX  \to \bY$ is said to be $G$-invariant.

An orthogonal representation of $G$ (or a $G$-representation) is a pair $\bV = (\bV_0, \rho_{\bV})$, where $\bV_0$ is a real, linear space and $\rho_{\bV}\colon  G \to O(\bV_0)$ is a continuous homomorphism from $G$ into the group of all linear, orthogonal automorphisms of $\bV_0$. If $\bV_0=\bR^n$, we use the standard notation $O(n,\bR)$ for $O(\bR^n)$.  We often do not distinguish  between $\bV$ and  $\bV_0$ using the same letter for the representation and the underlying linear space.

\begin{Remark}\label{rem:actrepr}
Note that if $\bV $ is a $G$-representation then letting $g v = \rho_{\bV}(g) v$ we obtain a $G$-action  on $\bV$. On the other hand, if $\bV$ is a linear space with an inner product, the $G$-representation can be understood as an action of $G$ on $\bV$ such that for each $g \in G$ the map $v \mapsto gv$ is a linear, orthogonal map, since such an action defines a homomorphism  $\rho_{\bV}\colon  G \to O(\bV)$ given by $\rho_{\bV}(g)(v)=gv$ (see \cite{BroDie}).
\end{Remark}

Two $G$-representations $\bV, \bW$ are called equivalent if there is a $G$-equivariant linear isomorphism $T\colon \bV \to \bW$. Given two $G$-representations  we denote by $\bV \oplus \bW$ the direct sum of $\bV$ and $\bW$ i.e. the direct sum of linear spaces with the linear group action defined by $g(v,w)=(gv, gw)$.

A $G$-representation $\bV$ is called irreducible if it  has no nontrivial invariant subspaces.

Let $B_\delta(v_0,\bV)=\{v \in \bV_0 \colon \|v-v_0\| < \delta\}.$ For simplicity of notation we write  $B_\delta(\bV)$ instead of $B_\delta(0,\bV)$.

\subsection{$S^1$-Representations}
\label{repsone}

Consider the group $S^1=\{e^{i\phi}\colon \phi\in\bR\}$ and fix $m \in \bN$. Define a homomorphism $\rho_m \colon S^1 \to O(2,\bR)$ by
\begin{equation}\label{eq:rhom}
\rho_m(e^{i\phi})=\left[
\begin{array}{rr} \cos(m \phi) &-\sin(m \phi)\\
\sin(m \phi) &\cos(m \phi)
\end{array}
\right]
\end{equation}
and $\rho_0\colon S^1\to O(1,\bR)=\{\pm1\}$ by $\rho_0(e^{i \phi}) = 1.$

Denote by $\bR[1,m]$ the real two-dimensional $\sone$-representation $(\bR^2,\rho_m)$ and by $\bR[1,0]$ the real one-dimensional $\sone$-representation $(\bR,\rho_0).$ %Moreover, denote by $\bR[0,0]$ the $0$-dimensional $\sone$-representation.
Additionally, denote by $\bR[k,m]$ (by $\bR[k,0]$, respectively) the direct sum of $k$ copies of $\bR[1,m]$ (of $\bR[1,0]$, respectively). The following fact is well-known, see \cite{Adams}.

\begin{Fact}
Any real irreducible $S^1$-representation  is equivalent to $\bR[1,m]$, for $m \in \bN$, or $\bR[1,0]$. Moreover, representations $\bR[1,m]$ and $\bR[1,m']$ are equivalent iff $m=m'$.
\end{Fact}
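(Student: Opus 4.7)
The plan is to exploit that $\sone$ is a compact connected abelian Lie group, and to analyse any orthogonal representation $\rho\colon \sone \to O(\bV)$ through its infinitesimal generator. First, any continuous homomorphism between Lie groups is automatically smooth, so $\rho$ is determined by a single skew-symmetric operator $A \in \mathfrak{o}(\bV)$ via $\rho(e^{i\phi}) = \exp(\phi A)$. The periodicity constraint $\rho(e^{2\pi i}) = \mathrm{id}$ then forces the spectrum of $A$ to lie in $i\bZ$.

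Next, being real skew-symmetric, $A$ is diagonalizable over $\bC$ with purely imaginary spectrum, so $\bV$ decomposes orthogonally as $\ker A \oplus \bigoplus_{m \geq 1} \bV_m$, where $\bV_m$ is the real form of $\ker(A - im\,\mathrm{id}) \oplus \ker(A + im\,\mathrm{id})$. Each $\bV_m$ further decomposes as an orthogonal sum of $A$-invariant real planes on which, in a suitable orthonormal basis, $A$ has the matrix $\bigl(\begin{smallmatrix} 0 & -m \\ m & 0 \end{smallmatrix}\bigr)$; on such a plane $\rho$ takes exactly the form \eqref{eq:rhom}, identifying the plane with $\bR[1,m]$. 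The pivotal point is that every $A$-invariant subspace is $\sone$-invariant (since $\rho(e^{i\phi})$ is the exponential of $\phi A$), and conversely every $\sone$-invariant subspace is $A$-invariant (obtained by differentiating at $\phi = 0$). Consequently, irreducibility of $\bV$ forces it to be either a single line in $\ker A$, giving $\bV \cong \bR[1,0]$, or a single eigenplane, giving $\bV \cong \bR[1,m]$ for some $m \geq 1$.

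For the uniqueness claim the cleanest tool is the character: $\trace \rho_m(e^{i\phi}) = 2\cos(m\phi)$ for $m \geq 1$ and $\trace \rho_0 \equiv 1$. Since equivalent representations share the same trace as a function on $\sone$, and the functions $\phi \mapsto 2\cos(m\phi)$ are pairwise distinct for different $m \geq 1$ and differ from the constant $1$, distinct values of $m \in \bN$ give inequivalent representations. The main subtlety I anticipate is the simultaneous identification of $A$-invariance with $\sone$-invariance; this bridge between the Lie-algebra and Lie-group pictures is what lets the spectral decomposition of $A$ automatically classify the $\sone$-invariant subspaces of $\bV$, and without it the argument would not close.
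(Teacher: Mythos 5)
The paper does not actually prove this Fact; it records it as well known and cites Adams's \emph{Lectures on Lie groups} for it, so there is no in-text argument to compare against. Your proof is, however, correct and complete, and it goes via a route that is worth commenting on.

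Your argument is the Lie-algebra route: continuity forces smoothness, you pass to the infinitesimal generator $A\in\mathfrak{o}(\bV)$, the periodicity $\exp(2\pi A)=\mathrm{id}$ pins the spectrum to $i\bZ$, the real skew-symmetric normal form produces the invariant planes carrying $\bigl(\begin{smallmatrix}0&-m\\ m&0\end{smallmatrix}\bigr)$, and the observation that closed $A$-invariant subspaces coincide with $\sone$-invariant subspaces closes the classification; characters then separate the $\bR[1,m]$ for distinct $m$. All of these steps are sound, and the one you flag as the pivot — the equivalence of $A$-invariance and $\sone$-invariance — is indeed the load-bearing link, justified correctly in both directions (exponentiation, resp.\ differentiation at $\phi=0$). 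The standard textbook treatment (e.g.\ Adams, or Br\"ocker--tom Dieck) instead typically complexifies, invokes Schur's lemma over $\bC$ to see that every irreducible complex $\sone$-representation is a one-dimensional character $e^{i\phi}\mapsto e^{im\phi}$, and then descends to real irreducibles by examining real forms and the type (real/complex) of each character. That approach buys uniformity — the same machinery classifies irreducibles for any compact abelian group — while your approach is more elementary in that it avoids complex representation theory altogether and works directly with the spectral normal form of a skew-symmetric matrix, at the cost of being tied to $\sone$ rather than a general torus. One cosmetic gap: you do not explicitly check that the $2$-dimensional planes $\bR[1,m]$, $m\geq1$, are themselves irreducible (no invariant line since $A$ has no real eigenvectors there); the Fact as stated does not strictly require this, but it is a one-line remark that would make the classification feel closed.
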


As the consequence of the above fact we obtain the well-known description of finite-dimensional orthogonal representations of $S^1$.
\begin{Corollary}\label{cor:s1repr}
Let $\bV=(\bR^N, \rho_{\bV})$ be an orthogonal representation of  $S^1$. Then there exist $r \in \bN\cup \{0\}$ and finite sequences $(k_0, \ldots, k_r),(m_1, \ldots, m_r),$ where $k_1, \ldots, k_r, m_1, \ldots, m_r \in \bN, k_0 \in \bN \cup \{0\}, m_1<\ldots<m_r$ such that $\bV$ is equivalent to  $\bR[k_0,0] \oplus \bR[k_1,m_1]\oplus \ldots \oplus \bR[k_r,m_r].$
%Moreover,
%$$
%\rho_{\bV}=(\underbrace{\rho_0,\ldots,\rho_0}_{k_0},\underbrace{\rho_{m_1},\ldots,\rho_{m_1}}_{k_1},\ldots,\underbrace{\rho_{m_r},\ldots,\rho_{m_r}}_{k_r}).
%$$
\end{Corollary}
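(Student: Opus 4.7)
The plan is to combine two standard ingredients: complete reducibility of finite-dimensional orthogonal representations of a compact Lie group, and the classification of the irreducible $\sone$-representations given by the preceding Fact.

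First I would invoke complete reducibility. Since $\sone$ is compact and the representation $\bV=(\bR^N,\rho_{\bV})$ is orthogonal, any $\rho_{\bV}(\sone)$-invariant subspace $W\subset \bR^N$ admits the orthogonal complement $W^{\perp}$ as an invariant complement. Iterating this argument on a minimal invariant subspace gives a decomposition
\[
\bV \;\cong\; \bigoplus_{j=1}^{s} \bV_j,
\]
where each $\bV_j$ is an irreducible orthogonal $\sone$-representation. (This step is the only non-combinatorial one; if one prefers, it can be replaced by a direct argument using the averaged inner product, but the existence of the invariant inner product is already built into the orthogonality of $\rho_{\bV}$.)

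Next I would identify each summand. By the preceding Fact, every irreducible $\bV_j$ is equivalent to $\bR[1,0]$ or to $\bR[1,m]$ for a unique $m\in\bN$. Let $k_0$ be the number of indices $j$ for which $\bV_j\cong \bR[1,0]$, and for each $m\in\bN$ let $k(m)$ be the number of $j$ for which $\bV_j\cong \bR[1,m]$. Grouping equivalent summands and using that direct sum commutes with $\sone$-equivariant isomorphism,
\[
\bV \;\cong\; \bR[k_0,0] \oplus \bigoplus_{m\in\bN,\; k(m)>0} \bR[k(m),m].
\]

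Finally, since $N<\infty$ only finitely many $k(m)$ are positive. Enumerate the values of $m$ with $k(m)\geq 1$ in increasing order as $m_1<m_2<\cdots<m_r$, and set $k_i=k(m_i)$ for $i=1,\dots,r$. This yields the required form
\[
\bV \;\cong\; \bR[k_0,0] \oplus \bR[k_1,m_1] \oplus \cdots \oplus \bR[k_r,m_r].
\]
No step here presents a genuine obstacle; the only point that requires appeal to a deeper fact is complete reducibility, and the rest is a bookkeeping of isotypical components together with the uniqueness clause of the preceding Fact (which ensures that the counts $k_0,k_1,\dots,k_r$ and the weights $m_1<\cdots<m_r$ are well defined).
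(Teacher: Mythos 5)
Your proof is correct and follows exactly the route the paper intends: the paper states the Corollary as an immediate consequence of the preceding Fact (classification of irreducibles) together with complete reducibility for compact groups, giving no further detail. Your write-up simply spells out the standard decomposition into irreducibles and the bookkeeping of isotypical components, which is what the paper leaves implicit.
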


\subsection{$T^2$-Representations}
\label{reptor}

Denote by $T^2=\{(e^{i\phi_1}, e^{i\phi_2})\in S^1\times S^1\colon \phi_1,\phi_2\in\bR\}$ the 2-dimensional torus.
Fix $(m,n) \in \bZ^2\setminus \{(0,0)\}$ and define a homomorphism $\rho_{(m,n)} \colon T^2 \to O(2,\bR)$ by
\begin{equation*}
\rho_{(m,n)}(e^{i\phi_1}, e^{i\phi_2})=\left[
\begin{array}{rr} \cos(m \phi_1+n\phi_2) &-\sin(m \phi_1+n\phi_2)\\
\sin(m \phi_1+n\phi_2) &\cos(m \phi_1+n\phi_2)
\end{array}
\right].
\end{equation*}
Moreover, put $\rho_{(0,0)}(e^{i\phi_1}, e^{i\phi_2})=1.$ Denote by $\bR[1,(m,n)]$ the real two-dimensional representation $(\bR^2, \rho_{(m,n)})$ and by $\bR[1,(0,0)]$ the real one-dimensional representation $(\bR, \rho_{(0,0)}).$ Additionally, denote by $\bR[k,(m,n)]$ (respectively by $\bR[k,(0,0)])$ the direct sum of $k$ copies of $\bR[1,(m,n)]$ (respectively $\bR[1,(0,0)]$).

The following fact can be found in \cite[Proposition II.8.5]{BroDie}.

\begin{Fact}
Any real irreducible $T^2$-representation  is equivalent to $\bR[1,(m,n)]$ (for $(m,n) \in \bZ^2\setminus \{(0,0)\}$) or $\bR[1,(0,0)]$. Moreover, representations $\bR[1,(m,n)]$ and $\bR[1,(m',n')]$ are equivalent iff $(m,n)=\pm(m',n')$.
\end{Fact}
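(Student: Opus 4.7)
The plan is to reduce the classification to characters of $T^2$ via complexification. First I would form $\bV_{\bC} = \bV \otimes_{\bR} \bC$ from a given real irreducible $T^2$-representation $\bV$. Since $T^2$ is compact and abelian, Schur's lemma together with simultaneous diagonalization of commuting unitary operators forces every irreducible complex $T^2$-representation to be one-dimensional. Hence $\bV_{\bC}$ splits as a direct sum of weight spaces on each of which $T^2$ acts by a character $\chi \colon T^2 \to S^1 \subset \bC^{\times}$.

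The next step is to identify the Pontryagin dual of $T^2$. A continuous homomorphism $S^1 \to S^1$ is of the form $e^{i\phi}\mapsto e^{im\phi}$ for a unique $m\in\bZ$ (a classical computation using the universal cover $\bR \to S^1$), so restricting a character of $T^2$ to each circle factor yields a unique pair $(m,n)\in \bZ^2$ such that
\[
\chi_{(m,n)}(e^{i\phi_1}, e^{i\phi_2}) = e^{i(m\phi_1+n\phi_2)}.
\]
In particular distinct pairs $(m,n)$ give distinct characters.

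Then I would exploit the real structure: complex conjugation $c\colon \bV_{\bC} \to \bV_{\bC}$ is $\bR$-linear and $T^2$-equivariant and sends the $\chi_{(m,n)}$-isotypical component to the $\chi_{(-m,-n)}$-isotypical component. The fixed point set of $c$ is $\bV$ itself, and it is $T^2$-invariant. Since $\bV$ is real irreducible, one of two possibilities occurs. Either $\bV_{\bC}$ is supported in the single weight $(0,0)$, and then $\bV \cong \bR[1,(0,0)]$. Or there exists $(m,n)\neq (0,0)$ with $\bV_{\bC} = \bV_{\bC}^{\chi_{(m,n)}} \oplus \bV_{\bC}^{\chi_{(-m,-n)}}$, each summand one-dimensional; picking a weight vector $z$ and writing $\bar z = c(z)$, the real span of $\{z+\bar z,\, i(z-\bar z)\}$ is a two-dimensional real $T^2$-invariant subspace on which a direct computation with $\chi_{(m,n)}(e^{i\phi_1},e^{i\phi_2})\cdot z = e^{i(m\phi_1+n\phi_2)}z$ gives exactly the matrix $\rho_{(m,n)}$ of Subsection~\ref{reptor}, showing $\bV \cong \bR[1,(m,n)]$.

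Finally, for the equivalence statement, a $T^2$-equivariant isomorphism $\bR[1,(m,n)] \to \bR[1,(m',n')]$ must complexify to an isomorphism of weight decompositions, and hence send $\{\chi_{(m,n)},\chi_{(-m,-n)}\}$ to $\{\chi_{(m',n')},\chi_{(-m',-n')}\}$, giving $(m,n)=\pm(m',n')$. Conversely, reflection $(x,y)\mapsto (x,-y)$ in $\bR^2$ conjugates the rotation by $m\phi_1+n\phi_2$ to the rotation by $-(m\phi_1+n\phi_2)$, providing an explicit equivalence between $\bR[1,(m,n)]$ and $\bR[1,(-m,-n)]$. The only step requiring care is the identification of the real form on the two-dimensional conjugate-pair weight space with the explicit matrix $\rho_{(m,n)}$; everything else is a direct transcription of the standard classification of characters of a compact abelian Lie group.
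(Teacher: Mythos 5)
The paper does not prove this Fact at all --- it simply cites Proposition II.8.5 of Br\"ocker--tom Dieck. Your argument is the standard one that underlies that reference: complexify, decompose into weight spaces for characters $\chi_{(m,n)}$ of the compact abelian group $T^2$, use the real structure $c$ to pair $\chi_{(m,n)}$ with $\chi_{(-m,-n)}$, and read off the real form from the basis $\{z+\bar z,\, i(z-\bar z)\}$. Your computation does produce $\rho_{(m,n)}$ exactly, and the reflection $(x,y)\mapsto(x,-y)$ does intertwine $\rho_{(m,n)}$ with $\rho_{(-m,-n)}$, so the equivalence statement is handled correctly in both directions. So the proposal is correct and matches the approach of the cited source rather than replacing it with something structurally different.

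One place where you could be slightly more explicit is the assertion that each of the two conjugate weight spaces is one-dimensional. The cleanest order of argument is: once a nonzero weight vector $z$ of weight $\chi_{(m,n)}$ with $(m,n)\neq(0,0)$ exists, the real span of $\{z+\bar z,\, i(z-\bar z)\}$ is a nonzero, two-dimensional, $T^2$-invariant, $c$-fixed subspace of $\bV$; irreducibility forces it to be all of $\bV$, so $\dim_{\bR}\bV=2$ and one-dimensionality of each weight space then follows \emph{a posteriori} rather than being assumed. Phrasing it that way avoids appearing to use the conclusion before it is established. Apart from that, the proof is complete, and also correctly handles the degenerate case $(m,n)=(0,0)$, where triviality of the action together with irreducibility gives $\dim_{\bR}\bV=1$ and $\bV\cong\bR[1,(0,0)]$.
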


Using this fact we recall the well-known description of finite-dimensional orthogonal $T^2$-representations, similar to the one for $S^1$-representations given in Corollary \ref{cor:s1repr}.
\begin{Corollary}\label{cor:repT}
Let $\bV=(\bR^N, \rho_{\bV})$ be an orthogonal representation of  $T^2$. Then there exist $r \in \bN\cup \{0\}$, $k_0\in\bN\cup\{0\}$, $k_1, \ldots, k_r\in \bN$ and $(m_1,n_1),\ldots, (m_r,n_r)\in\bZ^2\setminus\{(0,0)\}$ such that $\bV$ is equivalent to $\bR[k_0,(0,0)]\oplus \bR[k_1,(m_1,n_1)]\oplus\ldots \oplus \bR[k_r,(m_r,n_r)].$
%Moreover,
%$$
%\rho_{\bV}=(\underbrace{\rho_{(0,0)},\ldots,\rho_{(0,0)}}_{k_0},\underbrace{\rho_{(m_1,n_1)},\ldots,\rho_{(m_1,n_1)}}_{k_1},\ldots,\underbrace{\rho_{(m_r,n_r)},\ldots,\rho_{(m_r,n_r)}}_{k_r}).
%$$
\end{Corollary}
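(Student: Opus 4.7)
The plan is to reduce the statement to a standard complete-reducibility argument and then apply the preceding classification Fact. First I would invoke the well-known fact that every finite-dimensional orthogonal representation of a compact Lie group is completely reducible, so $\bV$ decomposes as an orthogonal direct sum $\bV \cong \bV^{(1)} \oplus \cdots \oplus \bV^{(s)}$ of $T^2$-invariant subspaces, each of which carries an irreducible orthogonal $T^2$-representation. This step is purely representation-theoretic and follows from the existence of a $T^2$-invariant inner product together with the orthogonal complement argument applied inductively.

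Next, I would apply the preceding Fact to each summand: every $\bV^{(j)}$ is equivalent, as a $T^2$-representation, to either $\bR[1,(0,0)]$ or to some $\bR[1,(m,n)]$ with $(m,n)\in\bZ^2\setminus\{(0,0)\}$. Grouping the summands equivalent to $\bR[1,(0,0)]$ together gives a contribution $\bR[k_0,(0,0)]$ with $k_0\in\bN\cup\{0\}$, and grouping the nontrivial summands according to their isomorphism class yields finitely many pairwise non-equivalent types $\bR[1,(m_i,n_i)]$ appearing with multiplicities $k_i\in\bN$, whose sum is $\bR[k_i,(m_i,n_i)]$. After this regrouping $\bV$ is equivalent to the claimed direct sum.

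The one genuinely delicate point, which I would treat carefully, is the choice of the labels $(m_i,n_i)$: by the second assertion of the preceding Fact, $\bR[1,(m,n)]\cong\bR[1,(m',n')]$ if and only if $(m',n')=\pm(m,n)$, so each isomorphism class of nontrivial irreducibles corresponds to an unordered pair $\{(m,n),-(m,n)\}\subset\bZ^2\setminus\{(0,0)\}$. Choosing one representative from each such pair (this is the analogue of the condition $m_1<\ldots<m_r$ in Corollary \ref{cor:s1repr}) gives well-defined integers $(m_i,n_i)$ and multiplicities $k_i$, and produces the desired decomposition. I expect the main (minor) obstacle to be merely bookkeeping—ensuring that the enumeration of isomorphism classes is unambiguous modulo the sign relation—rather than any substantive analytic or algebraic difficulty, since the heavy lifting is done by complete reducibility and by the already-stated classification of irreducibles.
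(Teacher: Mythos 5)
Your argument is correct and matches the standard route the paper implicitly invokes: the paper states this Corollary without proof, treating it as an immediate consequence of the preceding Fact together with complete reducibility, which is exactly the content of your first two paragraphs. Your third paragraph, flagging that one must choose a representative from each sign-pair $\{(m,n),-(m,n)\}$ when labelling the nontrivial irreducible types, is a legitimate and correctly handled bookkeeping point that the paper leaves silent.
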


In the proofs of our main results we are going to study some function spaces and their structures as $T^2$-representations. In particular, we are interested in the spaces $\h_n$ given by
\begin{equation}\label{eq:defHn}
\h_n =\{u \colon [0, 2 \pi] \to \bR^N; u(t)=a\cos nt+b\sin nt\colon a,b\in\bR^N \}
\end{equation}
for $n\geq 0$, with the inner product defined by $\ds \langle u,v\rangle_{\h_n} = \int_0^{2\pi} (\dot u(t), \dot v(t)) + (u(t),v(t)) \; dt.$  %To our ends it suffices to study the structure of a $T^2$-representation on $\h_n$. This structure is twofold: one is obtained as a so called shift in time, i.e. the group $S^1$ acting on $t$, the other one is obtained from an $S^1$-structure on $\bR^N$. Let us precise it.
Let $\bR^N$ be an $S^1$-representation equivalent to
\begin{equation*}%\label{eq:sotworepr}
\bV= \bR[k_0,0] \oplus \bR[k_1, m_1] \oplus \ldots \oplus \bR[k_r,m_r].
\end{equation*}
Then, we define a $T^2$-action on $\bH_n$ by
\begin{equation}\label{eq:actionHn}
(e^{i\phi_1}, e^{i\phi_2})(u)(t)=\rho_{\bV}(e^{i \phi_1})(u(t+\phi_2)).
\end{equation}
Observe that the space $\bH_n$ with this action is an orthogonal representation of $T^2$, see Remark \ref{rem:actrepr}.

\begin{Remark}\label{rem:H0}
Obviously, in the case $n=0$, the space $\bH_0$ consists of constant functions and can be identified with $\bR^N$. Therefore one can see the space $\bH_0$ in two ways: as an $S^1$-representation $\bV$  and as a $T^2$-representation  with the action given by
\begin{equation*}
(e^{i\phi_1}, e^{i\phi_2})u_0=\rho_{\bV}(e^{i \phi_1})u_0.
\end{equation*}
%i.e. the action \eqref{eq:actionHn} for a constant function.
In particular the isotropy group $T^2_{u_0}$ of $u_0 \in \bH_0$ is $S^1_{u_0} \times S^1$, where $S^1_{u_0}$ is the isotropy group of $u_0 \in \bR^{N}.$
\end{Remark}

\begin{Lemma}\label{Lem:Hn}
Consider the space $\bH_n$ with the $T^2$-action given by the formula \eqref{eq:actionHn}. Then, for $n>0$
\begin{equation}\label{eq:reprHn}
\bH_n \equiv_{T^2} \bR[k_0,(0,n)]\oplus \bigoplus_{j=1}^r\big(\bR[k_j,(m_j,n)] \oplus\bR[k_j, (-m_j,n)]\big).
\end{equation}
Moreover,
\begin{equation}\label{eq:reprH0}
\bH_0 \equiv_{T^2} \bR[k_0,(0,0)] \oplus \bR[k_1,(m_1,0)] \oplus \ldots \oplus \bR[k_r,(m_r,0)].
\end{equation}
\end{Lemma}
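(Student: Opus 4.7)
The plan is to exploit the additivity of the construction $\bV \mapsto \bH_n$ with respect to direct sums: decomposing $\bR^N$ along $\bV = \bR[k_0,0] \oplus \bigoplus_{j=1}^r \bR[k_j,m_j]$ induces a $T^2$-invariant direct sum splitting of $\bH_n$, whose summands are isomorphic to the analogous spaces built from each individual $S^1$-irreducible summand. Thus it suffices to prove the lemma for $\bV = \bR[1,0]$ and for $\bV = \bR[1,m]$ with $m \geq 1$, and then assemble. In the case $n = 0$, the elements of $\bH_0$ are constant functions, so the second factor of $T^2$ (which acts by time-shift) acts trivially; the $T^2$-action reduces to the original $S^1$-action pulled back along the first projection $T^2 \to S^1$, which immediately turns each $\bR[k_j, m_j]$ into $\bR[k_j, (m_j, 0)]$ and yields \eqref{eq:reprH0}.

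Now suppose $n > 0$. For the trivial summand $\bR[1,0]$, I would write $u(t) = a\cos nt + b\sin nt$ with $a, b \in \bR$ and compute $(e^{i\phi_1}, e^{i\phi_2}) \cdot u$ in the basis $(\cos nt, \sin nt)$; the first $S^1$-factor acts trivially, while the time-shift by $\phi_2$ yields a rotation by $-n\phi_2$ in the $(a,b)$-plane. This realises $\bR[1,(0,-n)]$, which by the Fact in Subsection \ref{reptor} is equivalent to $\bR[1,(0,n)]$. For a nontrivial summand $\bR[1,m]$, I would identify $\bR[1,m] \cong \bC$ so that $\rho_m(e^{i\phi_1})$ becomes multiplication by $e^{im\phi_1}$, and reparametrise
\begin{equation*}
u(t) = a\cos nt + b\sin nt = \alpha e^{int} + \beta e^{-int}, \qquad \alpha = \tfrac{1}{2}(a - ib), \ \beta = \tfrac{1}{2}(a + ib).
\end{equation*}
A direct computation then yields
\begin{equation*}
(e^{i\phi_1},e^{i\phi_2}) \cdot (\alpha, \beta) = \bigl(e^{i(m\phi_1 + n\phi_2)}\alpha,\; e^{i(m\phi_1 - n\phi_2)}\beta\bigr),
\end{equation*}
so the $\alpha$- and $\beta$-components realise $\bR[1,(m,n)]$ and $\bR[1,(m,-n)]$, respectively. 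Since $(m,-n) = -(-m,n)$, the Fact again gives $\bR[1,(m,-n)] \equiv \bR[1,(-m,n)]$, and summing over all summands of $\bV$ produces \eqref{eq:reprHn}.

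The step requiring the most care is the verification that the complex reparametrisation $(a,b) \mapsto (\alpha, \beta)$ is a $T^2$-equivariant $\bR$-linear isomorphism (it is \emph{not} $\bC$-linear) and that the output weights are correctly matched modulo the equivalence $(p,q) \sim -(p,q)$ coming from the classification of irreducible $T^2$-representations. Once this bookkeeping is done, the remainder of the argument is merely a direct sum decomposition together with an explicit action computation on each factor.
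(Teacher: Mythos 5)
Your proof is correct and arrives at the same underlying decomposition as the paper's, but derives it by a cleaner route. The paper also reduces to a single irreducible $S^1$-summand $\bR[1,m]$ and sets $E_n = \{a\cos nt + b\sin nt : a,b \in \bR[1,m]\}$, but then simply \emph{writes down} two real subspaces $E_n^1, E_n^2$ of $E_n$ and explicit $T^2$-equivariant isomorphisms $A_1 : \bR[1,(m,n)] \to E_n^1$, $A_2 : \bR[1,(-m,n)] \to E_n^2$, verifying equivariance by hand. Your complex parametrisation $u(t) = \alpha e^{int} + \beta e^{-int}$ with $\alpha = \tfrac12(a-ib)$, $\beta = \tfrac12(a+ib)$ (where $a,b \in \bR[1,m] \cong \bC$) produces exactly these two subspaces — $E_n^1$ is the $\beta = 0$ slice and $E_n^2$ the $\alpha = 0$ slice — but the invariance of each piece and the weights $(m,n)$, $(m,-n)\sim(-m,n)$ fall out of the one-line computation $(e^{i\phi_1},e^{i\phi_2})\cdot(\alpha,\beta) = \bigl(e^{i(m\phi_1+n\phi_2)}\alpha,\; e^{i(m\phi_1-n\phi_2)}\beta\bigr)$ rather than needing to be guessed and then checked. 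The one point you flag as needing care — that $(a,b)\mapsto(\alpha,\beta)$ is only $\bR$-linear, not $\bC$-linear — is indeed the only subtlety, and it is fine: all you need is a $T^2$-equivariant $\bR$-linear isomorphism, which this is. The $m=0$ and $n=0$ cases are handled correctly and match the paper. So: same decomposition, different (and arguably better-motivated) derivation of it.
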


\begin{proof}
Fix $m\geq 0$ and consider the space $E_n=\{a \cos nt + b \sin nt\colon a,b \in \bR[1,m]\}$. Define the action of the group $T^2$ on the elements of $E_n$ by
$$(e^{i\phi_1},e^{i\phi_2})(a \cos nt + b \sin nt)=(\rho_m(e^{i\phi_1}) a) \cos n(t+\phi_2)+(\rho_m(e^{i\phi_1}) b) \sin n(t+\phi_2),$$
where $\rho_m$ is given by the formula \eqref{eq:rhom}. Observe that this action coincides with the formula \eqref{eq:actionHn} for $\bV=\bR[1,m]$.  Note that in the case $m>0$ the space $E_n$ can be decomposed as a  direct sum of two subspaces $E_n^1,E_n^2 \subset E_n$ given by
\begin{equation*}
\begin{split}
&E_n^1=\left\{\left(\begin{array}{c}a_1\\
a_2\end{array}\right) \cos nt + \left(\begin{array}{r}-a_2\\ a_1\end{array}\right) \sin nt \colon a_1,a_2\in\bR \right\},\\
&E_n^2=\left\{\left(\begin{array}{r}a_1\\ -a_2\end{array}\right) \cos nt + \left(\begin{array}{r}-a_2\\
-a_1\end{array}\right) \sin nt \colon a_1,a_2\in\bR\right\}.
\end{split}
\end{equation*}
It is easy to observe that
$A_1\colon \bR[1,(m,n)] \to E_n^1$
and $A_2\colon \bR[1,(-m,n)] \to E_n^2,$
given by
\begin{equation*}
\begin{split}
&A_1(a_1,a_2)=\left(\begin{array}{c}a_1\\ a_2\end{array}\right) \cos nt + \left(\begin{array}{r}-a_2\\ a_1\end{array}\right) \sin nt,\\
&A_2(a_1,a_2)=\left(\begin{array}{r}a_1\\ -a_2\end{array}\right) \cos nt + \left(\begin{array}{r}-a_2\\ -a_1\end{array}\right) \sin nt,
\end{split}
\end{equation*}
are $T^2$-equivariant isomorphisms.
Since $E_n=E_n^1 \oplus E_n^2$, we obtain that  $E_n\equiv_{T^2} \bR[1,(m,n)]\oplus \bR[1,(-m,n)].$ Analogously, in the case $m=0$ the isomorphism $A_0(a,b)=a \cos nt-b \sin nt$ gives the equivalence between  $\bR[1,(0,n)]$ and $E_n$. This proves the equivalence \eqref{eq:reprHn}. Analogously one can prove the equivalence \eqref{eq:reprH0}.
\end{proof}

\subsection{The Euler ring $U(T^2)$}
\label{sec:Euler}

Now we proceed to recall basic facts about the Euler ring of the torus $T^2$, needed in the proofs of our main results. For a more complete exposition and the general definition of the Euler ring $U(G)$ for any compact Lie group $G$ see for instance \cite{TomDieck1979}, \cite{TomDieck}. In our paper we restrict our attention to the case of the torus, following the notation given in \cite{GarRyb}.

Denote by $\sub(T^2)$ the set of all closed subgroups of $T^2$. For $H \in \sub(T^2)$ let $T^2/H$ be the orbit space of the $H$-action $H\times T^2\to T^2$ given by $(h,g)\mapsto gh^{-1}$ and $T^2/H^+$ be the disjoint union of $T^2/H$ and the base point $\{\star\}$. It is known that $T^2/H^+$ is a pointed $T^2$-CW-complex with only one 0-cell. Denote by $\chi_{T^2}(T^2/H^+)$ its equivariant Euler characteristic, see Definition 2.3 in \cite{GarRyb}. We define $U(T^2)$ as a $\bZ$-module generated by all elements $\chi_{T^2}(T^2/H^+)$ for $H \in \sub(T^2)$. Observe that this definition allows to understand $U(T^2)$ as  generated by all closed subgroups of $T^2$.

In the general case of any compact Lie group $G$ one can define the multiplication structure in $U(G)$ with the use of smash products of $G$-CW-complexes, see \cite{TomDieck1979}, \cite{TomDieck}. With this multiplication, the module $U(G)$ becomes a ring, called the Euler ring of the group $G$. In the case of $G=T^2$, we can use the result from \cite{GarRyb}, characterizing the multiplication in the Euler ring of a torus, instead of the general definition. Let us recall this result.
For $H_1, H_2\in \sub(T^2)$ put $H_0=H_1\cap H_2$. We have
\begin{equation}\label{eq:UT2multiplication}
\chi_{T^2}(T^2/H_1^+)\star\chi_{T^2}(T^2/H_2^+)=
\left\{\begin{array}{cl}
\chi_{T^2}(T^2/H_0^+), &\text{ if }\dim H_1+\dim H_2 =2+ \dim H_0, \\
\Theta & \text{ otherwise. }
\end{array}
\right.
\end{equation}
We denote by $\Theta$ the zero element and by $\bI=\chi_{T^2}(T^2/{T^2}^+)$ the identity in $U(T^2)$.

\begin{Remark}\label{rem:decomp}
By \eqref{eq:UT2multiplication}, for any $H_1, H_2\in \sub(T^2) \setminus \{T^2\}$ the product $\chi_{T^2}(T^2/H_1^+)\star\chi_{T^2}(T^2/H_2^+)$ is nonzero if and only if $\dim H_1=\dim H_2=1$ and $\dim H_1\cap H_2=0$. Let us reformulate it.
Put
$\ds U_i (T^2)=\bigoplus_{H\in\sub(T^2), \dim H=i} \bZ$ for $i=0,1,2$ and consider the decomposition
\[
U(T^2)= U_2(T^2)\oplus U_1 (T^2)\oplus U_0 (T^2).
\]
Then, the multiplication restricted to these subgroups has the following properties:
\begin{enumerate}[(i)]
\item $\star \colon  U_1 (T^2) \times  U_1 (T^2) \to  U_0 (T^2)$,
\item $\star \colon U_1 (T^2) \times  U_0 (T^2) \to  \{\Theta\}$,
\item $\star \colon  U_0 (T^2) \times  U_0 (T^2) \to  \{\Theta\}$.
\end{enumerate}
\end{Remark}

%In the proof of our main result we use formula \eqref{eq:UT2multiplication} for particular subgroups $H \in \sub(T^2)$. Therefore, in the following we describe some elements of the set $\sub(T^2)$ and the multiplication for corresponding generators.

%Therefore, in the following we describe some elements of the set $\sub(T^2)$ and the multiplication for corresponding generators.

%Now we proceed to study in more details the formula \eqref{eq:UT2multiplication} for particular subgroups $H \in \sub(T^2)$, which will later appear in the proofs of our main results. Let us start with recalling the classification of subgroups of the torus.
Let $(m,n) \in \bZ^2\setminus\{(0,0)\}$ and put
\begin{equation}\label{eq:defHmn}
H_{(m,n)}=\{(e^{i\phi_1}, e^{i\phi_2})\in T^2\colon e^{i(m\phi_1+n\phi_2)}=1\}.
\end{equation}
Note that if $v\in\bR[1,(m,n)]$ and $v\neq 0$, its isotropy group is $H_{(m,n)}$.

\begin{Remark}
Any subgroup $H \in \sub(T^2)\setminus\{T^2\}$ is of the form $H=H_{(m_1,n_1)} \cap \ldots \cap H_{(m_r,n_r)}$ for some $(m_1,n_1), \ldots, (m_r,n_r)\in \bZ^2\setminus\{(0,0)\}$. Indeed, it is known that for every closed subgroup $H$ of a compact Lie group $G$ there is a finite dimensional orthogonal representation $\bV$ and $v\in\bV$ such that the isotropy group of $v$ is equal to $H$, see Corollary 4.6.7 of \cite{DuisKolk}. Therefore, the characterization of closed subgroups of $T^2$ follows directly from the classification of finite dimensional orthogonal $T^2$-representations given in Corollary \ref{cor:repT}.
\end{Remark}

In the following lemmas we collect some properties of intersections of subgroups of $T^2$ and of the multiplication in $U(T^2)$, which we will need in the proofs of our main results.
%\textcolor{red}{For our later purposes, it is sufficient to restrict the considerations to elements of the form $\chi_{T^2}(T^2/H_{(m, n)}^+)$ for $m, n \geq 0$.}

\begin{Lemma}\label{lem:intersections1}
For any $k,n\in\bN$, $m\in\bZ$,
$$H_{(k, 0)}\cap H_{(m, n)}=\{(x,y)\in\bZ_k\times S^1\colon y^n=x^{-m}\}.$$%=\bigcup_{x\in\bZ_k}\left(\{x\}\times\{y\in  S^1\colon y^n=x^{-m}\}\right).$$
\end{Lemma}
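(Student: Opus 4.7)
The proof is essentially a direct unwinding of the definitions, so my plan is just to compute both subgroups explicitly and intersect them.

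First, I will describe $H_{(k,0)}$. By \eqref{eq:defHmn}, an element $(e^{i\phi_1}, e^{i\phi_2}) \in T^2$ lies in $H_{(k,0)}$ if and only if $e^{ik\phi_1} = 1$, equivalently $e^{i\phi_1}$ is a $k$-th root of unity, while $e^{i\phi_2}$ is unconstrained. Hence $H_{(k,0)} = \mathbb{Z}_k \times S^1$, where $\mathbb{Z}_k$ denotes the group of $k$-th roots of unity in $S^1$.

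Next, I will rewrite the condition defining $H_{(m,n)}$ multiplicatively. Setting $x = e^{i\phi_1}$ and $y = e^{i\phi_2}$, the condition $e^{i(m\phi_1 + n\phi_2)} = 1$ is exactly $x^m y^n = 1$, i.e.\ $y^n = x^{-m}$ (which makes sense for any $m \in \mathbb{Z}$, since $x^{-1}$ is well defined in $S^1$).

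Finally, intersecting the two descriptions, $(x,y) \in H_{(k,0)} \cap H_{(m,n)}$ iff $x \in \mathbb{Z}_k$, $y \in S^1$, and $y^n = x^{-m}$, which is exactly the asserted equality. There is no real obstacle here; the only thing to be slightly careful about is that $m$ may be negative or zero, but the expression $x^{-m}$ for $x \in S^1$ remains well defined in all cases, and the argument goes through without modification.
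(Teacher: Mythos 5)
Your proof is correct and follows the same route as the paper's, which simply states that the lemma follows immediately from the formula \eqref{eq:defHmn}; you have merely spelled out the routine unwinding of that formula. Nothing further is needed.
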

\begin{proof} The lemma follows immediately from the formula \eqref{eq:defHmn}.
%Let $(x,y)=(e^{i\phi_1},e^{i\phi_2})\in H_{(k, 0)}\cap H_{(m, n)}$. Then, from the definition of $H_{(k, 0)}$, $x^k=1$, i.e. $x\in \bZ_k$. Furthermore,
 %   $    x^m\cdot y^n=e^{i(m\phi_1+n\phi_2)}=1.
  %  $
  %  Thus $y^n=x^{-m}$.
\end{proof}

\begin{Remark}\label{rem:twisted}
For a fixed $x\in \bZ_k$, the set $\{y\in S^1\colon y^n=x^{-m}\} $ is in fact the set of all complex $n$th roots of $x^{-m}$. In particular $H_{(k, 0)}\cap H_{(m, n)}$ contains $k\cdot n$ elements. Moreover, this intersection can be seen as a twisted subgroup of $\bZ_k\times S^1$ in the sense of \cite{BKS}. More precisely, if we consider the homomorphism $\phi_m\colon\bZ_k\to S^1$ given by $\phi_m(x)=x^{-m}$, then this intersection is the $n$-folded subgroup of $\bZ_k\times S^1$ twisted by the homomorphism $\phi_m$.
\end{Remark}

\begin{Lemma}\label{lem:intersections2}
If for some $n_1, n_2, k_1,k_2\in \bN$, $m_1,m_2\in\bZ$
$$H_{(k_1, 0)}\cap H_{(m_1, n_1)} =H_{(k_2, 0)}\cap H_{(m_2,n_2)},$$
then $n_1=n_2$, $k_1=k_2$ and $m_2=m_1+k_1 \cdot j$ for some $j\in \bZ$.
\end{Lemma}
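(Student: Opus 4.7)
The plan is to start from the explicit description of the intersections given by Lemma \ref{lem:intersections1}, namely
\[
H_{(k_i, 0)} \cap H_{(m_i, n_i)} = \{(x,y) \in \bZ_{k_i} \times S^1 \colon y^{n_i} = x^{-m_i}\},
\]
and then extract the three required equalities from the set-theoretic equality of these two subsets of $\bZ_{k_1} \cdot \bZ_{k_2} \times S^1 \subset T^2$.

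First I would compare the projections onto the first coordinate. For a fixed $x \in \bZ_{k_i}$, the equation $y^{n_i} = x^{-m_i}$ always admits $n_i$ solutions in $S^1$ (the $n_i$-th roots of $x^{-m_i}$), as indicated in Remark \ref{rem:twisted}. Hence the projection of $H_{(k_i, 0)} \cap H_{(m_i, n_i)}$ onto the first coordinate of $T^2$ is exactly the cyclic group $\bZ_{k_i}$. From the assumed equality of the two intersections these projections must coincide, which yields $\bZ_{k_1} = \bZ_{k_2}$ and therefore $k_1 = k_2 =: k$. Counting cardinalities next, the total number of elements is $k_i \cdot n_i$ by Remark \ref{rem:twisted}, so $k_1 n_1 = k_2 n_2$, and together with $k_1 = k_2$ this gives $n_1 = n_2 =: n$.

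It remains to determine the relation between $m_1$ and $m_2$. With $k$ and $n$ identified, fix an arbitrary $x \in \bZ_k$ and consider the fiber over $x$ in each intersection: these are $\{y \in S^1 \colon y^n = x^{-m_1}\}$ and $\{y \in S^1 \colon y^n = x^{-m_2}\}$, both nonempty sets of $n$-th roots. Two sets of $n$-th roots in $S^1$ coincide iff the values they are roots of coincide, hence $x^{-m_1} = x^{-m_2}$, i.e.\ $x^{m_2 - m_1} = 1$. Taking $x$ to be a generator of $\bZ_k$, say $x = e^{2\pi i / k}$, we conclude $k \mid (m_2 - m_1)$, so $m_2 = m_1 + k \cdot j$ for some $j \in \bZ$, as claimed.

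The argument is essentially bookkeeping; the only delicate point is to notice that equality of two fibers $\{y^n = \alpha\}$ and $\{y^n = \beta\}$ in $S^1$ forces $\alpha = \beta$ (not merely that one is a rotation of the other), which is immediate because the two sets are cosets of the subgroup of $n$-th roots of unity and both contain all of their own fiber. No further topological or equivariant machinery is needed.
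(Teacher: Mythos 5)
Your proof is correct and follows essentially the same approach as the paper's: start from the explicit description in Lemma \ref{lem:intersections1}, deduce $k_1=k_2$ by comparing the $\bZ_{k_i}$ parts, get $n_1=n_2$ by the cardinality count from Remark \ref{rem:twisted}, and then extract the divisibility $k \mid (m_2-m_1)$ by comparing fibers over elements of $\bZ_k$. The paper reaches the last step by picking an arbitrary pair in the common intersection and concluding that $\bZ_k$ sits inside the kernel of $x \mapsto x^{m_1-m_2}$, while you take a generator of $\bZ_k$ directly; these are equivalent. Your extra remarks on the projection onto the first coordinate and on why equality of root-sets forces equality of the values merely make explicit steps that the paper leaves implicit.
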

\begin{proof}
From Lemma \ref{lem:intersections1} we get
$$H_{(k_i, 0)}\cap H_{(m_i, n_i)}=\{(x,y)\in\bZ_{k_i}\times S^1\colon y^{n_i}=x^{-m_i}\}$$
for $i=1,2$.
This yields $\bZ_{k_1}=\bZ_{k_2}$ and consequently $k_1=k_2$.
By Remark \ref{rem:twisted}, $k_1 n_1=k_2 n_2$ and hence $n_1=n_2$.
Putting now $k_0=k_1=k_2$ and $n_0=n_1=n_2$ and taking a pair
$(x,y)\in H_{(k_0, 0)}\cap H_{(m_i, n_0)}$ for $i=1,2$, we have $x\in\bZ_{k_0}$ and
$y^{n_0}=x^{-m_1}=x^{-m_2}$. Hence $x^{m_1-m_2}=x^{m_2-m_1}=1$ and $x\in \bZ_{|m_1-m_2|}$. Therefore $\bZ_{k_0}$ is a subgroup of $\bZ_{|m_1-m_2|}$ and consequently $k_0|(m_1-m_2)$, which proves the assertion.
\end{proof}

\begin{Lemma}\label{lem:intersections3}
Let
$$A=\sum_{k\in\bN} a_k \chi_{T^2}(T^2/H_{(k, 0)}^+),\ B=\sum_{(m,n)\in\bZ\times\bN} b_{(m,n)} \chi_{T^2}(T^2/H_{(m, n)}^+)\in U_1(T^2)\setminus\{\Theta\}.$$
%with finitely many nonzero $a_k$, $b_{(m,n)}$.
%Then $A\star B\in U_0(T^2)$.
If all the nonzero coefficients $b_{(m,n)}$ are of the same sign, then $A\star B \neq \Theta$.
\end{Lemma}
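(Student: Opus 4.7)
The plan is to expand $A\star B$ bilinearly, group the resulting terms according to which $0$-dimensional subgroup they produce, and exhibit one such subgroup whose aggregated coefficient is forced to be nonzero by the sign hypothesis.

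First I would observe that for $k,n\in\bN$ and $m\in\bZ$ the subgroups $H_{(k,0)}$ and $H_{(m,n)}$ are both $1$-dimensional, while by Lemma \ref{lem:intersections1} their intersection is finite, hence $0$-dimensional. Consequently the nontrivial case of the multiplication formula \eqref{eq:UT2multiplication} applies to every pair of summands, yielding
\[
A\star B \;=\; \sum_{k\in\bN}\sum_{(m,n)\in\bZ\times\bN} a_k\, b_{(m,n)}\, \chi_{T^2}\bigl(T^2/(H_{(k,0)}\cap H_{(m,n)})^+\bigr),
\]
which is in fact a \emph{finite} sum since $A$ and $B$ have finite support in the free $\bZ$-module $U(T^2)$.

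Next, I would fix $k_0\in\bN$ with $a_{k_0}\neq 0$ and $(m_0,n_0)\in\bZ\times\bN$ with $b_{(m_0,n_0)}\neq 0$; such choices exist because $A,B\neq\Theta$. Put $K=H_{(k_0,0)}\cap H_{(m_0,n_0)}$. By Lemma \ref{lem:intersections2}, the equality $H_{(k,0)}\cap H_{(m,n)}=K$ holds if and only if $k=k_0$, $n=n_0$, and $m\in m_0+k_0\bZ$. Using $\bZ$-linear independence of the generators $\chi_{T^2}(T^2/H^+)$, $H\in\sub(T^2)$, of $U(T^2)$, the coefficient of $\chi_{T^2}(T^2/K^+)$ in $A\star B$ is therefore
\[
c_K \;=\; a_{k_0}\sum_{j\in\bZ} b_{(m_0+k_0 j,\, n_0)}.
\]
The sign hypothesis now finishes the argument: the inner sum contains the nonzero term $b_{(m_0,n_0)}$ and every other nonzero summand shares the same sign as it, so no cancellation can occur, the sum is nonzero, and consequently $c_K\neq 0$, whence $A\star B\neq\Theta$.

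The delicate point I anticipate is the exhaustiveness guaranteed by Lemma \ref{lem:intersections2}, namely that every triple $(k,m,n)$ producing the same intersection $K$ must lie in the arithmetic progression described above, with no ``hidden'' coincidences that could contribute an extra term of the opposite sign to $c_K$. Once this one-to-one correspondence between contributing pairs and residue classes modulo $k_0$ is established, the assumption on the signs of $b_{(m,n)}$ trivially prevents cancellation and the conclusion follows at once.
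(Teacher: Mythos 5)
Your proposal is correct and follows essentially the same route as the paper: fix a pair of indices with nonzero coefficients, form the intersection $H_0$, invoke Lemma~\ref{lem:intersections2} to identify exactly which pairs contribute to the coefficient $c_{H_0}$, and conclude that $c_{H_0}=a_{k_0}\sum_j b_{(m_0+k_0 j,n_0)}$ cannot vanish under the sign hypothesis. The paper's argument is the same, down to the formula \eqref{eq:ch0}.
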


\begin{proof}
Fix $H_{(k_0, 0)}, H_{(m_0, n_0)}$ such that the corresponding coefficients $a_{k_0}, b_{(m_0,n_0)}$ are nonzero and put $H_0=H_{(k_0, 0)}\cap H_{(m_0, n_0)}$. Since $\dim H_0=0$ (see Lemma \ref{lem:intersections1}) and $\dim H_{(k_0, 0)}=\dim H_{(m_0, n_0)}=1$, using the formula \eqref{eq:UT2multiplication} we get
$$\chi_{T^2}(T^2/H_{(k_0, 0)}^+)\star \chi_{T^2}(T^2/H_{(m_0, n_0)}^+)=\chi_{T^2}(T^2/H_0^+) \neq \Theta.$$
Moreover,
\[
A\star B= c_{H_0}\chi_{T^2}(T^2/H_0^+)+\sum_{H\neq H_0} c_H \chi_{T^2}(T^2/H^+)\in U_0(T^2).
\]
%for $c_H\in\bZ$.
Note that $c_{H_0}=\sum  a_{k} \cdot b_{(m,n)}$, where the sum is over all $k$, $m$, $n$ such that $H_{(k, 0)}\cap H_{(m, n)}=H_0$.
By Lemma \ref{lem:intersections2}, if
$H_{(k, 0)}\cap H_{(m, n)} =H_0,$
then  $n=n_0$, $k=k_0$ and $m=m_0+k_0\cdot j$ for some $j\in \bZ$. Hence
\begin{equation}\label{eq:ch0}
c_{H_0}=\sum_{j\in\bZ}  a_{k_0} \cdot b_{(m_0+k_0\cdot j,n_0)}=a_{k_0} \cdot \sum_{j\in\bZ}  b_{(m_0+k_0\cdot j,n_0)}.
\end{equation}
Since $b_{(m_0,n_0)}\neq 0$ and $b_{(m_0+k_0\cdot j,n_0)}$ have the same sign, $c_{H_0}\neq 0$. This finishes the proof.
\end{proof}

\begin{Corollary}\label{cor:intersections4}
Under the assumptions of Lemma \ref{lem:intersections3}, using the formula \eqref{eq:ch0}, we immediately obtain that if $b_{(m,n)}\leq 0$ for all $(m,n)$ and
\begin{enumerate}[(i)]
\item  $a_k\geq 0$ for all $k$, then all the nonzero coefficients of $A\star B$ are negative,
\item $a_k\leq 0$ for all $k$, then all the nonzero coefficients of $A\star B$ are positive.
\end{enumerate}
\end{Corollary}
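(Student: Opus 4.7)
The plan is to apply the sign analysis directly to the key identity \eqref{eq:ch0} derived in the proof of Lemma \ref{lem:intersections3}. The point is that every nonzero coefficient of $A\star B$ has the form
$$c_{H_0}=a_{k_0}\cdot \sum_{j\in\bZ} b_{(m_0+k_0\cdot j,\,n_0)}$$
for some $k_0\in\bN$ and $(m_0,n_0)\in\bZ\times\bN$ realizing $H_0=H_{(k_0,0)}\cap H_{(m_0,n_0)}$. Once this is in hand, both conclusions reduce to the sign of a product.

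First I would note that this expression for $c_{H_0}$ is not specific to the one zero-dimensional subgroup singled out in the proof of Lemma \ref{lem:intersections3}, but applies to every zero-dimensional $H_0\in\sub(T^2)$ appearing with nonzero coefficient in $A\star B$. Indeed, by Remark \ref{rem:decomp} we have $A\star B\in U_0(T^2)$, and for any such $H_0$ Lemma \ref{lem:intersections2} lists all triples $(k,m,n)$ with $H_{(k,0)}\cap H_{(m,n)}=H_0$ as exactly those with $k=k_0$, $n=n_0$, and $m=m_0+k_0\cdot j$ for $j\in\bZ$; combined with the multiplication rule \eqref{eq:UT2multiplication}, this reproduces \eqref{eq:ch0} verbatim for $H_0$.

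The sign check is then immediate. Under the hypothesis $b_{(m,n)}\leq 0$ for all $(m,n)$, the inner sum $\sum_{j\in\bZ} b_{(m_0+k_0\cdot j,\,n_0)}$ is nonpositive. In case (i), multiplying by $a_{k_0}\geq 0$ gives $c_{H_0}\leq 0$, so any nonzero $c_{H_0}$ is strictly negative; in case (ii), multiplying by $a_{k_0}\leq 0$ gives $c_{H_0}\geq 0$, so any nonzero $c_{H_0}$ is strictly positive. There is essentially no obstacle: the corollary is a purely bookkeeping consequence of \eqref{eq:ch0}, and the only thing worth making explicit is that \eqref{eq:ch0} holds for an arbitrary zero-dimensional $H_0$ in the support of $A\star B$, not merely for a distinguished one.
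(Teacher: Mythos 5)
Your proof is correct and matches the paper's intent: the corollary is stated there without proof as an immediate consequence of formula \eqref{eq:ch0}, and your write-up is exactly the sign bookkeeping the authors have in mind. The one thing you usefully make explicit — that \eqref{eq:ch0} describes the coefficient of \emph{every} zero-dimensional $H_0$ in the support of $A\star B$, via Lemma \ref{lem:intersections2} and the multiplication rule \eqref{eq:UT2multiplication}, not just the distinguished $H_0$ in the proof of Lemma \ref{lem:intersections3} — is precisely the detail the paper leaves to the reader.
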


\begin{Remark}\label{rem:Usone}
The main algebraic structure used in our paper is the Euler ring of the torus $T^2$. However, some of our assumptions are given in the terms of another ring, the Euler ring of the group $S^1$, denoted $U(S^1)$, which has a simpler structure. Similarly as in the case of a torus, the module $U(S^1)$ is generated by the $S^1$-equivariant Euler characteristics $\chi_{S^1}(S^1/H^+)$ with $H\in \sub(S^1)=\{S^1,\bZ_1,\bZ_2,\ldots\}$.
The elements of  $U(S^1)$ are of the form
$$ \alpha_0 \chi_{S^1}(S^1/S^{1+})+\alpha_1 \chi_{S^1}(S^1/\bZ_1^+)+\ldots +\alpha_k \chi_{S^1}(S^1/\bZ_k^+)+\ldots$$
where $\alpha_i \in\bZ$ and only finitely many of them are nonzero. More details about that ring, as well as about a more general ring of the $n$-dimensional torus $U(T^n)$, can be found for example in \cite{GarRyb}.
\end{Remark}

\subsection{The degree for equivariant gradient maps}
\label{degg}

Fix a compact Lie group $G$. Note that in our paper we are interested in the cases $G=T^2$ and $G=S^1$, but in this subsection, for the simplicity of notation, we describe the general case. Let $\bV$ be a finite dimensional, orthogonal representation of $G$, $\varphi \in C^1(\bV, \bR)$ a $G$-invariant function, and $\Omega \subset \bV$ an open, bounded, $G$-invariant set such that $\varphi$ is $\Omega$-admissible, i.e. $\partial \Omega \cap (\nabla \varphi)^{-1}(0) = \emptyset.$ For such a pair $(\varphi, \Omega)$ Gęba has defined in \cite{Geba} the degree for $G$-equivariant gradient maps, being an element of the Euler ring $U(G).$  We denote this degree by $\dg(\nabla \varphi, \Omega)$. This degree has analogs of the properties of the Brouwer degree, i.e. the properties of excision, additivity, linearisation and homotopy invariance. The precise formulations of these properties can be found in \cite{Geba}, \cite{Ryb2005milano}. Moreover in the proofs of our results we use the so called product formula, allowing to compute the degree of a product mapping. We formulate this property below.

\begin{Fact}\label{fact:multiplication} (Theorem 3.1 of \cite{GolRyb2013})
Let $\Omega_i \subset \bV_i$ be open, bounded and $G$-invariant subsets of finite dimensional $G$-representations $\bV_i$ and let $\varphi_i \in C^1(\bV_i, \bR)$ be $G$-invariant, $\Omega_i$-admissible functions for $i=1,2$. Then
\begin{equation}\label{eq:multiplication}
\dg((\nabla \varphi_1, \nabla \varphi_2), \Omega_1 \times \Omega_2)=\dg(\nabla \varphi_1, \Omega_1) \star \dg(\nabla \varphi_2, \Omega_2).
\end{equation}
\end{Fact}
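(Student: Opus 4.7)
The plan is to prove the product formula by reducing to a local contribution at each isolated critical $G$-orbit and then comparing normal forms. Using the homotopy invariance and approximation property of $\dg$, I would first replace each $\varphi_i$ by a nearby $G$-invariant $C^2$ function whose gradient has only finitely many nondegenerate critical $G$-orbits in $\Omega_i$, without changing the value of $\dg(\nabla\varphi_i, \Omega_i)$. By the additivity axiom one obtains
$$\dg(\nabla \varphi_i, \Omega_i) = \sum_j \dg(\nabla\varphi_i, U_i^j),$$
where the $U_i^j$ are disjoint $G$-invariant isolating neighbourhoods of the critical orbits $G(x_i^j) \subset \Omega_i$.

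Next, under the diagonal $G$-action on $\Omega_1\times\Omega_2$, the critical set of $(\nabla\varphi_1, \nabla\varphi_2)$ is the disjoint union of the products $G(x_1^j)\times G(x_2^k)$. Applying additivity on the product side together with $\bZ$-bilinearity of $\star$ on $U(G)$, the proof reduces to the local identity
$$\dg\bigl((\nabla\varphi_1, \nabla\varphi_2), U_1\times U_2\bigr) = \dg(\nabla\varphi_1, U_1)\star \dg(\nabla\varphi_2, U_2) \qquad (\ast)$$
for a single pair of isolated critical orbits $G(x_1), G(x_2)$. To establish $(\ast)$, I would use the equivariant slice theorem together with an equivariant Morse lemma to bring $\varphi_i$ into the normal form $\varphi_i(x_i) + Q_i(y_i)$ on a $G$-tube around $G(x_i)$, with $Q_i$ a $G_{x_i}$-invariant nondegenerate quadratic form on the slice. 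In this form the local degree reads
$$\dg(\nabla\varphi_i, U_i) = (-1)^{m_i^-}\chig(G/G_{x_i}^+),$$
where $m_i^-$ is the Morse index of the negative part of $Q_i$; this is the computation indicated in \cite{Geba, Ryb2005milano}.

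The isotropy of $(x_1,x_2)$ under the diagonal action is $G_{x_1}\cap G_{x_2}$. When $G(x_1)\times G(x_2)$ is itself a single nondegenerate $G$-Morse--Bott orbit, its slice equals the direct sum of the factor slices, and the local computation above gives
$$(-1)^{m_1^-+m_2^-}\chig\bigl(G/(G_{x_1}\cap G_{x_2})^+\bigr),$$
which agrees with the right-hand side of $(\ast)$ via \eqref{eq:UT2multiplication}. When instead $G(x_1)\times G(x_2)$ contains extra $G$-tangential directions, the product Hessian is degenerate along them; a small $G$-equivariant gradient perturbation destroys the critical set entirely, so both sides of $(\ast)$ vanish, in accordance with the $\Theta$ case of \eqref{eq:UT2multiplication}.

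The main obstacle in this plan is exactly this degenerate case: one has to confirm that the perturbation removing the extra tangential directions can be realised as the gradient of a $G$-invariant function, so that the homotopy invariance of $\dg$ applies within the gradient class rather than within the broader class of $G$-equivariant maps. The equivariant Morse lemma in the required $C^2$ setting (as opposed to $C^\infty$) also deserves care. Once both technical points are settled, the additivity axiom assembles the local contributions into the claimed global identity \eqref{eq:multiplication}.
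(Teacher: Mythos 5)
The paper does not prove this statement: it is imported verbatim as Theorem~3.1 of \cite{GolRyb2013}, whose argument goes through the equivariant Conley index --- the equivariant gradient degree is identified (up to sign conventions) with the $G$-equivariant Euler characteristic of the Conley index of the associated negative gradient flow, the Conley index of a product flow is the smash product of the indices, and the Euler-ring multiplication in $U(G)$ is by definition induced by smash products of pointed $G$-CW-complexes. So the ``paper's own proof'' is a categorical/Conley-theoretic one, quite different from your direct genericity scheme.

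Your plan --- approximate by a $G$-invariant function with finitely many nondegenerate critical $G$-orbits, invoke additivity and $\bZ$-bilinearity of $\star$, and reduce to a local identity $(\ast)$ for a single pair of orbits --- is a legitimate alternative route, and the first half is sound. The gap is in the treatment of $(\ast)$ when $G(x_1)\times G(x_2)$ is not a single diagonal $G$-orbit. In that case the product critical manifold is Morse--Bott but not a $G$-orbit, and a generic $G$-invariant perturbation does \emph{not} ``destroy the critical set entirely''; it breaks $G(x_1)\times G(x_2)$ into a finite union of nondegenerate critical $G$-orbits whose signed contributions must then be summed. That alternating sum equals $\chi_G(G/G_{x_1}^+)\star\chi_G(G/G_{x_2}^+)$ essentially by the definition of the Euler-ring product (via the $G$-CW structure on $G/G_{x_1}\times G/G_{x_2}$), and this is what actually closes the argument; asserting that both sides simply vanish is wrong. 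Moreover, the $\Theta$-or-not dichotomy you invoke is the formula \eqref{eq:UT2multiplication}, which is specific to $G=T^2$; the Fact is stated for an arbitrary compact Lie group $G$, where $\chi_G(G/H^+)\star\chi_G(G/K^+)$ need not be zero even when $G/H\times G/K$ has excess dimension. So the ``degenerate'' case requires the orbit-type decomposition of $G/G_{x_1}\times G/G_{x_2}$, not a vanishing claim. Your flagged worries (equivariant Morse lemma in class $C^2$, realisability of the perturbation inside the gradient class) are real but secondary by comparison; the main missing step is the one above.
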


Using the description of the Euler ring given in Section \ref{sec:Euler},  we write $$\degttwo(\nabla \varphi, \Omega) = \sum_{H\in\sub(T^2)} n_H \cdot \chi_{T^2}(T^2/H^+),$$ and analogously, $$\degsone(\nabla \varphi, \Omega)=\sum_{H\in\sub(S^1)} n_H \cdot \chi_{S^1}(S^1/H^+).$$ Taking into account Remark \ref{rem:Usone}, we also use the notation
$$\degsone(\nabla \varphi, \Omega)=\alpha_0 \chi_{S^1}(S^1/S^{1+})+\alpha_1 \chi_{S^1}(S^1/\bZ_1^+)+\ldots +\alpha_k \chi_{S^1}(S^1/\bZ_k^+)+\ldots.$$

In our results we consider mainly $G=T^2$ and mappings acting on the representations $\bH_n$ given by the formula \eqref{eq:defHn}. The special case of such a  representation is obtained for $n=0$. In this situation we have the following relation of the degrees on the $T^2$-representation $\bH_0$ and on the $S^1$-representation $\bR^N$:

\begin{Remark}\label{lem:coefficients}
Let $\bR^N$ be an $S^1$-representation, $\Omega \subset \bR^N$ an $S^1$-invariant subset and $\varphi \in C^1(\bV, \bR)$ an $S^1$-invariant function such that $\partial \Omega \cap (\nabla \varphi)^{-1}(0)=\emptyset$. Consider the $T^2$-representation $\bH_0$ given by \eqref{eq:defHn}. Without loss of generality we can consider $\Omega$ as a  $T^2$-invariant subset of $\bH_0$ and $\varphi \in C^1(\bH_0, \bR)$ as a $T^2$-invariant function. From Remark \ref{rem:H0} and the definition of the degree for equivariant gradient maps it follows that if
$$\degsone(\nabla \varphi, \Omega) = \alpha_0 \chi_{S^1}(S^1/S^{1+})+\alpha_1 \chi_{S^1}(S^1/\bZ_1^+)+\ldots +\alpha_k \chi_{S^1}(S^1/\bZ_k^+)+\ldots$$ then
$$\degttwo(\nabla \varphi, \Omega) = \alpha_0 \chittwo(T^2/T^{2+})+\alpha_1 \chittwo(T^2/H_{(1,0)}^+)+\ldots +\alpha_k \chittwo(T^2/H_{(k,0)}^+)+\ldots.$$

%Let $\bV$ be a $T^2$-representation such that $\bV^{\{e\} \times S^1}=\bV$, i.e. $\bV \approx_{T^2}\bR[k_0,(0,0)] \oplus \bR[k_1,(m_1,0)] \oplus \ldots \oplus \bR[k_r,(m_r,0)].$ Additionally, let $\Omega \subset \bV$ be a $T^2$-invariant subset and $\varphi \in C^1(\bV, \bR)$ a $T^2$-invariant function. Then $\bV$ is also an $S^1$-representation equivalent to $\bR[k_0,0] \oplus \bR[k_1,m_1]\oplus \ldots \oplus\bR[k_r,m_r].$ Moreover if
%$$\degsone(\nabla \varphi, \Omega) = \alpha_0 \chi_{S^1}(S^1/S^{1+})+\ldots +\alpha_k \chi_{S^1}(S^1/\bZ_k^+)+\ldots$$ then
%$$\degttwo(\nabla \varphi, \Omega) = \alpha_0 \chittwo(T^2/T^{2+})+\ldots +\alpha_k \chittwo(T^2/H_{(k,0)}^+)+\ldots.$$
%Let $\Omega$ be such that $\Omega^{\{e\}\times S^1}=\Omega$ and $\varphi \in C^1(\bV, \bR)$ be a $T^2$-invariant function. Then
%\begin{equation*}
%\degttwo_{(\bZ_k \times S^1)}(\nabla \varphi, \Omega)=
%\mathrm{deg}^{\nabla}_{T^2}sotwo_{(\bZ_k)}(\nabla \varphi, \Omega).
%\end{equation*}
%\begin{equation*}
%\mathrm{deg}^{\nabla}_{T^2,(\bZ_k \times S^1)}(\nabla \varphi, \Omega)=
%\mathrm{deg}^{\nabla}_{S^1,(\bZ_k)}(\nabla \varphi, \Omega).
%\end{equation*}
\end{Remark}
We are particularly interested in computing the degree in the case of an isomorphism. It is known, see the formula (4.1) of \cite{Ryb2005milano}, that such computations can be reduced to computing the degree of a minus identity. On the other hand, for $G$ being a torus, using Theorem 4.3 of \cite{Ryb2005milano} and Theorem 3.2 of \cite{GarRyb}, we can obtain an explicit formula for such a degree. In particular, for $G=T^2$, if $\bV=\bR[k_0,(0,0)]\oplus \bR[k_1,(m_1,n_1)]\oplus\ldots \oplus \bR[k_r,(m_r,n_r)]$ we have
\begin{equation}\label{eq:degminusid}
\begin{split}
\degttwo(-Id, B_{\alpha}(\bV))=&(-1)^{k_0}\left(\chi_{T^2}(T^2/T^{2+})-\sum_{i=1}^r  k_i \cdot \chi_{T^2}(T^2/H_{(m_i,n_i)}^+)\right)+\\&+\sum_{H \in \{H  \in \sub(T^2)\colon \dim H =0\}} k_{H}\cdot \chi_{T^2}(T^2/H^+).
\end{split}
\end{equation}

\begin{Remark}
Note that in the case of the irreducible representation $\bR[1,(m,n)]$ for $(m,n) \neq(0,0)$, the above formula can be computed directly from the definition of the degree for equivariant gradient maps. In this case we obtain
\begin{equation}\label{eq:degminusid2}
\degttwo(-Id, B_{\alpha}(\bR[1,(m,n)]))=\chi_{T^2}(T^2/T^{2+})-\chi_{T^2}(T^2/H_{(m,n)}^+).
\end{equation}
\end{Remark}

The degree for equivariant gradient maps has its generalization in infinite-dimensional case. Let $\bH$ be an infinite-dimensional, separable Hilbert space which is an orthogonal representation of $G$ and $\Phi \in C^1(\bH, \bR)$ be a $G$-invariant functional such that
\begin{equation}\label{eq:formofPhi}
\nabla \Phi(u)=u-\nabla \eta(u),
\end{equation}
where $\nabla \eta \colon \bH \to \bH$ is a $G$-equivariant completely continuous operator. Moreover, let $\Omega \subset \bH$ be an open, bounded and a $G$-invariant subset such that  $\partial \Omega \cap (\nabla \Phi)^{-1}(0) = \emptyset.$ For such a pair $(\Phi, \Omega)$, one can define the infinite-dimensional version of the degree for equivariant gradient maps, see \cite{Ryb2005milano}. For simplicity of notation, we denote this degree by the same symbol as in finite dimensional case, i.e. $\dg(\nabla \Phi, \Omega)$. This degree has properties of excision, additivity, linearization and homotopy invariance, see Theorem 4.5 of \cite{Ryb2005milano}.
%\begin{Remark}\label{rem:degizo}
%In particular, the property of homotopy invariance allows us to simplify computations of the degree for linear isomorphism. More precisely, let $K \colon \bH \to \bH$ be the completely continuous linear isomorphism. Then there exists the $G$-representation $\bV^-$ such that
%$$\dg(Id-K, B_{\delta}(\bH))=\dg(-Id, B_{\delta}(\bV^-)).$$
%Note that from the spectral properties of completely continuous perturbations of identity it follows that $\bV^-$ is finite-dimensional.
%\end{Remark}

\begin{Remark}\label{rem:multiplication}
The product formula stated in Fact \ref{fact:multiplication} holds also when one or both of the mappings $\varphi_i$ in formula \eqref{eq:multiplication} are of the form \eqref{eq:formofPhi}. This is a direct consequence of Fact \ref{fact:multiplication} and the definition of the infinite-dimensional version of the degree for equivariant gradient maps.
\end{Remark}

\section{Periodic solutions of Newtonian systems}\label{sec:main}

Now we turn our attention to the main result of the article on the existence of connected sets of solutions of some Newtonian systems. We start with formulating the main theorem and then, throughout the remainder of the section, we give its proof.

\subsection{Formulation of the main result} \label{sec:formulation}
Our aim is to study properties of continua (i.e. closed, connected sets) of $2\pi$-periodic solutions of a family of autonomous Newtonian systems of the form:
\begin{equation}\label{eq:newtonian}
\ddot{u}(t)=-\lambda^2 U'(u(t)),
\end{equation}
where  $\lambda >0$. %More precisely, we investigate the phenomenon of global bifurcation of nonstationary $2\pi$-periodic solutions of the system \eqref{eq:newtonian}.
We assume that
\begin{enumerate}
\item[(a1)] $\bR^N$ is an orthogonal representation of the group $S^1$,  $\Omega\subset\bR^N$ is an open, $S^1$-invariant subset and the potential  $U \in C^2(\Omega, \bR)$ is $S^1$-invariant. Moreover,
there exist $C>0$ and $s\in[1,+\infty)$ such that $|U'(u)|\leq C(1+|u|^{s})$,
\item[(a2)] $(U')^{-1}(0)$ is finite,
\item[(a3)]  there exists $u_0\in (U')^{-1}(0)$ such that
\begin{enumerate}
\item[(a3.1)] $\sigma(U''(u_0))\cap (0,+\infty)\neq \emptyset$,
%\textcolor{red}{Czy nie powinno być $u_{i_0}$? Dlaczego robimy założenia tylko w pierwszym punkcjie? W innych nie można? DALEJ POJAWIA SIĘ $u_{i_0}$!!!}
\item[(a3.2)] $\deg^{\nabla}_{\sone}(-U',B_{\delta}(u_0,\bR^N))\neq \Theta\in U(S^1)$ for $\delta>0$ such that $B_{\delta}(u_0,\bR^N)\cap(U')^{-1}(0) =\{u_0\}.$
\end{enumerate}
\end{enumerate}

%\begin{Remark}\label{rem:degree_comp}
%The assumption (a2) implies that $(U')^{-1}(0)\subset \Omega^{S^1}$ and in particular the orbit of each critical point of $U$ consists of only this point. On the other hand, the assumption (a3.1) means that there exists at least one positive eigenvalue of $U''(u_0)$ and the assumption (a3.2) reads that for
%\begin{equation}\label{eq:froma32}
%\deg^{\nabla}_{S^1}(-U',B_{\delta}(u_0,\bR^N))=
%\mathfrak{n}_0 \cdot \chi_{S^1}(S^1/S^{1 +}) +\sum_{i=1}^{\infty} \mathfrak{n}_{i} \cdot \chi_{S^1}(S^1/\bZ_{i}^+),
%\end{equation}
%at least one of the numbers $\mathfrak{n}_i$, $i=0,1,2,\ldots$, is nonzero.
%
%From the definition of the degree for $S^1$-equivariant gradient maps,
%$$\mathfrak{n}_0=\deg_{B}((-U')_{|\Omega^{S^1}},B_{\delta}(u_0,\bR^N)\cap\Omega^{S^1},0),$$
%see \cite{Geba}. Next, from \cite[Lemma 5.1]{Rabier},
%\begin{equation}\label{eq:rem21}
%\deg_{B}((-U')_{|\Omega^{S^1}},B_{\delta}(u_0,\bR^N)\cap\Omega^{S^1},0)=\deg_{B}(-U',B_{\delta}(u_0,\bR^N),0)
%\end{equation}
%and hence $\mathfrak{n}_0=\deg_{B}(-U',B_{\delta}(u_0,\bR^N),0)$.
%Furthermore, if $\bR^N$ is a trivial $S^1$-representation then
%$$\deg^{\nabla}_{S^1}(-U',B_{\delta}(u_0,\bR^N))=
%\deg_{B}(-U',B_{\delta}(u_0,\bR^N),0)\cdot \chi_{S^1}(S^1/S^{1 +})
%$$
%and thus in that case the assumption (a3.2) means that $\deg_{B}(-U',B_{\delta}(u_0,\bR^N),0)\neq 0$.
%\end{Remark}

Below we formulate the main result of our paper. We start with introducing the notation.
Let $C_{2 \pi}([0,2\pi],\Omega) =\{u \in C([0,2\pi], \Omega)\colon u(0)=u(2\pi)\}.$ It is clear that $C_{2 \pi}([0,2\pi],\Omega) $ is an open subset of $C_{2\pi}([0,2\pi],\bR^N)$. We consider $2\pi$-periodic solutions of the system \eqref{eq:newtonian} as elements  $(u, \lambda) \in C_{2 \pi}([0,2\pi],\Omega) \times (0, +\infty).$
Let $(U')^{-1}(0)=\{u_0, \ldots, u_q\}$. Identifying the points $u_0, \ldots, u_q\in \Omega$ with constant functions in $C_{2 \pi}([0,2\pi],\Omega) $, we define two subsets $\cT, \cN\cT\subset C_{2 \pi}([0,2\pi],\Omega)  \times (0,+\infty)$ as follows
%Moreover, identifying points $u_0, \ldots, u_q$ with constant functions in $C_{2 \pi}([0,2\pi],\Omega) $, we define two subsets $\cT, \cN\cT\subset C_{2 \pi}([0,2\pi],\Omega)  \times (0,+\infty)$ as follows
\begin{equation*}
\begin{split}
&\cT=\{u_0, \ldots, u_q\}\times(0, +\infty),\\
&\cN\cT=\{(u,\lambda)\colon (u,\lambda) \text{ is a nonstationary } 2\pi\text{-periodic solution of the system \eqref{eq:newtonian}}\}.
\end{split}
\end{equation*}

The elements of $\cT$ are usually called trivial solutions of the system \eqref{eq:newtonian} and elements of $\cN\cT$ nontrivial ones. In other words, in this terminology the stationary solutions are treated as the trivial ones.

Assume that $\lambda_0\in (0, +\infty)$ is such that $(u_0, \lambda_0) \in \cl(\cN\cT)$ and denote by $\cC(u_{0}, \lambda_0)$ the continuum of $\cl(\cN\cT)$ containing $(u_{0}, \lambda_0)$, where the closure of $\cN\cT$ is taken in $C_{2 \pi}([0,2\pi],\Omega)  \times (0, +\infty).$

Observe that the assumption (a3.1) means that there exists at least one positive eigenvalue of $U''(u_0)$.   Put $\Lambda(u_0) = \{\frac{k}{\beta}\colon k \in \bN, \beta \in (0,+\infty) \text{ and } \beta^2 \in \sigma(U''(u_0))\}.$

We claim that for $\lambda_0 \in \Lambda(u_0)$, under the above assumptions, from the stationary solution $(u_0, \lambda_0)$ emanates a connected family of nonstationary $2\pi$-periodic solutions of the system \eqref{eq:newtonian}. More precisely, we are going to prove the following theorem.

\begin{Theorem}
\label{thm:main}
Consider the system \eqref{eq:newtonian} with the potential $U$ and $u_0 \in (U')^{-1}(0)$ satisfying the assumptions (a1)-(a3). Then for $\lambda_0 \in \Lambda(u_0)$  we have $(u_0, \lambda_0) \in \cT \cap \cl(\cN\cT)$. Moreover $\cC(u_0, \lambda_0) \neq \{(u_0, \lambda_0)\}$ and either this set is not compact in $C_{2 \pi}([0,2\pi],\Omega)  \times (0, +\infty)$  or it is compact and $\cC(u_0, \lambda_0) \cap (\cT \setminus \{(u_0, \lambda_0)\}) \neq \emptyset.$
\end{Theorem}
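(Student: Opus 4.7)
My plan is to recast the problem variationally in a Hilbert space with a $T^2$-action, to detect local bifurcation at $(u_0,\lambda_0)$ via a jump of the $T^2$-equivariant gradient degree, and then to upgrade this to the global alternative via a Rabinowitz-type theorem for that degree.

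First I work on the Sobolev space $\hone = H^1([0,2\pi],\bR^N)$ of $2\pi$-periodic functions with inner product $\langle u,v\rangle = \int_0^{2\pi}(\dot u\cdot\dot v + u\cdot v)\,dt$, and define
\[
\Phi(u,\lambda) = \tfrac{1}{2}\|u\|^2 - \int_0^{2\pi}\!\Bigl(\lambda^2 U(u(t)) + \tfrac{1}{2}|u(t)|^2\Bigr)\,dt.
\]
The growth condition in (a1) combined with the compact embedding $\hone\hookrightarrow C([0,2\pi],\bR^N)$ yields $\nabla_u\Phi(u,\lambda) = u - \nabla\eta(u,\lambda)$ with $\nabla\eta$ compact, so that the infinite-dimensional gradient degree of Subsection \ref{degg} applies. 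Critical points of $\Phi(\cdot,\lambda)$ are exactly the classical $2\pi$-periodic solutions of \eqref{eq:newtonian}; the trivial ones are the constants in $(U')^{-1}(0)$. Giving $\hone$ the natural $T^2$-action combining the pointwise $\sone$-action on $\bR^N$ with translation in time, the functional $\Phi(\cdot,\lambda)$ becomes $T^2$-invariant.

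Next, I decompose $\hone = \bigoplus_{n\geq 0}\bH_n$ into Fourier modes. By Lemma \ref{Lem:Hn} each $\bH_n$ is a $T^2$-subrepresentation, and $\nabla_u^2\Phi(u_0,\lambda)$ acts diagonally, being on $\bH_n$ a compact self-adjoint perturbation of the identity whose kernel is nontrivial precisely when $\lambda^2\mu = n^2$ for some $\mu\in\sigma(U''(u_0))$, that is precisely when $\lambda\in\Lambda(u_0)$. Hence for small $\delta,\varepsilon>0$ the degrees $D_{\pm} := \degttwo(\nabla_u\Phi(\cdot,\lambda_0\pm\varepsilon),B_\delta(u_0,\hone))$ are well defined, and by the product formula (Fact \ref{fact:multiplication} together with Remark \ref{rem:multiplication}) each factorizes as a $\star$-product of $\bH_n$-degrees. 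The $\bH_0$-factor coincides, by Remark \ref{lem:coefficients}, with the image in $U(T^2)$ of $\degsone(-U',B_\delta(u_0,\bR^N))$, which is nontrivial by (a3.2) and is supported on $\{T^2\}\cup\{H_{(k,0)} : k\in\bN\}$. The finitely many $\bH_n$-factors that differ between $D_+$ and $D_-$ can be computed via formulas \eqref{eq:degminusid}--\eqref{eq:degminusid2} combined with the identification \eqref{eq:reprHn}: because on each crossing mode the eigenvalues $\frac{n^2-\lambda^2\mu_j}{n^2+1}$ move monotonically in $\lambda$, their jumps produce integer combinations of the classes $\chittwo(T^2/H_{(\pm m_j,n)}^+)$ whose nonzero coefficients have a uniform sign. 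Lemma \ref{lem:intersections3} together with Corollary \ref{cor:intersections4}, applied to the $U_1(T^2)$-parts of the $\bH_0$-factor and of the jump, then produces a nonzero $U_0(T^2)$-component in the bifurcation index $\cI := D_+ - D_-$.

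Finally, non-triviality of $\cI$ forces $(u_0,\lambda_0)\in\cT\cap\cl(\cN\cT)$ and $\cC(u_0,\lambda_0)\neq\{(u_0,\lambda_0)\}$; a standard Rabinowitz-type global theorem for the $T^2$-equivariant gradient degree, to be established in Subsection \ref{sec:glob} and applied to the $T^2$-orbits of nontrivial critical points of $\Phi$, then yields that $\cC(u_0,\lambda_0)$ either is non-compact in $\hone\times(0,+\infty)$ or meets $\cT\setminus\{(u_0,\lambda_0)\}$; standard ODE bootstrap transfers both the continuum and the dichotomy to $C_{2\pi}([0,2\pi],\Omega)\times(0,+\infty)$, finishing the proof. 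The main obstacle is the algebraic step in the preceding paragraph: the $\star$-product of the $\bH_0$-contribution (sitting in $U_1(T^2)\oplus U_2(T^2)$) with the jumps on the crossing modes could a priori collapse to $\Theta\in U(T^2)$, and the whole intersection calculus of Subsection \ref{sec:Euler}—particularly the uniform-sign hypothesis of Lemma \ref{lem:intersections3}—is introduced precisely to certify that it does not.
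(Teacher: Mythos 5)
Your overall strategy matches the paper's: recast the problem as critical points of a $T^2$-invariant functional $\Phi$ on $\hone$, compute the two equivariant gradient degrees $D_{\pm}$, show their difference is nontrivial in $U(T^2)$, and conclude by an equivariant Rabinowitz alternative transferred to $C_{2\pi}([0,2\pi],\Omega)\times(0,+\infty)$. However, two steps in your outline contain genuine gaps.

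First, your claim that $D_{\pm}$ ``factorizes as a $\star$-product of $\bH_n$-degrees'' is not justified as stated. On $\bH_0$ the linearization equals $-(\lambda_0\pm\varepsilon)^2 U''(u_0)$, whose kernel is $\ker U''(u_0)$ for \emph{every} $\lambda>0$; so if $0\in\sigma(U''(u_0))$ the critical point is degenerate for all $\lambda$, contrary to your assertion that the kernel is nontrivial precisely when $\lambda\in\Lambda(u_0)$ (that is correct only for $n\geq 1$). Consequently you cannot linearize, and since the higher-order terms of $\nabla_u\Phi$ mix the Fourier modes, the map is not of product form across $\bH_n$, so Fact \ref{fact:multiplication} does not apply directly. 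The paper resolves this with the Splitting Lemma: it homotopes $\nabla\Phi_\pm$ near $u_0$ to a product of a nonlinear gradient on $\cN=\ker\cL_\pm\subset\bH_0$ and the linear part $(\cL_\pm)|_{\cR}$, then after a further split of $\cR$ and a second homotopy restricted to $\bH_0$ recovers the factor $\deg^{\nabla}_{T^2}((\nabla\Phi_\pm)|_{\bH_0},B_\delta(\bH_0))$. That intermediate deformation is essential and is missing from your sketch.

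Second, your algebraic conclusion is incomplete. You assert that Lemma \ref{lem:intersections3}, applied to the $U_1$-parts of the $\bH_0$-factor and of the jump, ``produces a nonzero $U_0(T^2)$-component.'' But the $U_1$-part of the $\bH_0$-factor is $\sum_{i\geq 1}\mathfrak{n}_i\,\chi_{T^2}(T^2/H_{(i,0)}^+)$, and assumption (a3.2) does not force this to be nonzero: one can have $\mathfrak{n}_0\neq 0$ and $\mathfrak{n}_i=0$ for all $i\geq 1$. In that case Lemma \ref{lem:intersections3} is inapplicable, and the $U_0$-component of the bifurcation index may actually vanish (for instance, when $\cV$ is a single irreducible $\bR[1,(m,n)]$, formula \eqref{eq:degminusid2} gives $\deg^{\nabla}_{T^2}(-Id,B_\delta(\cV))-\bI=-\chi_{T^2}(T^2/H_{(m,n)}^+)\in U_1(T^2)$, with trivial $U_0$-part). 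Nontriviality must then be read off the $U_1$-component: when $\mathfrak{n}_0\neq 0$ one has $\pi_1$ of the product equal to $\mathfrak{n}_0\cdot\pi_1\bigl(\deg^{\nabla}_{T^2}(-Id,B_\delta(\cV))-\bI\bigr)\neq\Theta$. The paper's proof therefore splits into the cases $\mathfrak{n}_0\neq 0$ (argue via $U_1$) and $\mathfrak{n}_0=0$ (argue via $U_0$ with Lemma \ref{lem:intersections3}); your argument covers only the latter.
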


The proof of this theorem is given in Sections \ref{sec:var}-\ref{sec:proofmain}. The idea of this proof is as follows. We consider periodic solutions of the system \eqref{eq:newtonian} as critical points of some associated functional. The definition and properties of this functional are given in Section \ref{sec:var}. To study the set of critical points of such a functional we use the theory of global bifurcation. In Section \ref{sec:glob} we give the definition of this phenomenon and recall the necessary condition for its occurrence. We also define a bifurcation index being an element of $U(T^2)$. Using an equivariant global bifurcation theorem we show that to prove Theorem \ref{thm:main} it is enough to prove the nontriviality of such an index. This final step of the proof is done in Section \ref{sec:proofmain}.

\subsection{Variational setting}
\label{sec:var}

Define
$$\h^1_{2\pi} = \{u\colon [0,2\pi] \rightarrow \bR^N\colon  u \text{ is absolutely continuous, } u(0)=u(2\pi), \dot u \in L^2([0,2\pi],\bR^N)\}.$$
It is well-known that $\h^1_{2\pi}$ with the inner product
$$\ds \langle u,v\rangle_{\h^1_{2\pi}} = \int_0^{2\pi} (\dot u(t), \dot v(t)) + (u(t),v(t)) \; dt$$
is a separable Hilbert space.
Consider an open subset $\h^1_{2\pi}(\Omega)\subset\h^1_{2\pi}$ defined by $\h^1_{2\pi}(\Omega)=\{u\in \h^1_{2\pi}\colon u([0,2\pi])\subset\Omega\}$.
Define a $C^2$-functional $\Phi \colon\h^1_{2\pi}(\Omega) \times (0,+\infty) \to \bR$ by
\begin{equation}\label{eq:Phi}
\Phi(u,\lambda) = \int_0^{2\pi} \left( \frac{1}{2} \| \dot u(t) \|^2 -  \lambda^2U(u(t)) \right) \; dt.
\end{equation}

It is known that the gradient $\nabla \Phi$ is of the form of a completely continuous perturbation of the identity, see \cite{Rab}. Moreover, its zeros are in one-to-one correspondence with $2\pi$-periodic solutions of the system \eqref{eq:newtonian}.

Since we assume the $S^1$-symmetry of the system, the associated functional $\Phi$ is $T^2$-symmetric.
Namely, recall that, by (a1), $\bR^N$ is an $S^1$-representation which, by Corollary \ref{cor:s1repr}, can be assumed to be equivalent to
\begin{equation}\label{eq:sotworepr}
\bV=\bR[k_0,0] \oplus \bR[k_1, m_1] \oplus \ldots \oplus \bR[k_r,m_r],
\end{equation}
for some $k_1, \ldots, k_r, m_1, \ldots, m_r \in \bN, k_0 \in \bN\cup \{0\}.$
Therefore the space $\hone$ is an orthogonal representation of the group $T^2=S^1 \times S^1$ with the action given by
\begin{equation}\label{eq:action}
(e^{i \phi_1}, e^{i\phi_2})(u)(t)=\rho_{\bV}(e^{i\phi_1}) \left(u(t+\phi_2)\right).
\end{equation}
It easily follows that $\h^1_{2\pi}(\Omega)$ is a $T^2$-invariant subset of $\h^1_{2\pi}$ and $\Phi$ is $T^2$-invariant.

 It is known that
 $$\hone = \overline{\h_0 \oplus \bigoplus_{n=1}^{\infty} \h_n},$$
where  $\h_n =\{u(t)=a\cos nt+b\sin nt\colon a,b\in\bR^N\}$, for $n \geq 0$. Then, from the formula \eqref{eq:sotworepr} and Lemma \ref{Lem:Hn},
\begin{equation}\label{eq:opisHn}
\begin{split}
&\bH_0 \equiv_{T^2} \bR[k_0,(0,0)] \oplus \bR[k_1,(m_1,0)] \oplus \ldots \oplus \bR[k_r,(m_r,0)],\\
&\bH_n \equiv_{T^2} \bR[k_0,(0,n)]\oplus \bigoplus_{j=1}^{r}\big(\bR[k_j,(m_j,n)] \oplus  \bR[k_j,(-m_j,n)]\big).
\end{split}
\end{equation}

Denote by $\alpha_1, \ldots, \alpha_q$ the distinct eigenvalues of $A=U''(u_0)$ and let  $V_A(\alpha_1), \ldots, V_A(\alpha_q)$ be the corresponding eigenspaces. Thus  $\bR^N=V_A(\alpha_1) \oplus \ldots \oplus V_A(\alpha_q)$. Since $U$ is $S^1$-invariant, all these eigenspaces are $S^1$-representations.

As $\bR^N$ decomposes to a direct sum of $V_A(\alpha_i)$, we decompose $\bH_n$ to $\cV_A^n(\alpha_i)$, where
$\cV_A^n(\alpha_i)=\{a_n \cos nt +b_n \sin nt\colon a_n, b_n \in V_A(\alpha_i)\} \subset \bH_n.$ Taking into consideration the formula \eqref{eq:action}, the subspaces $\cV_A^n(\alpha_i)$ are $T^2$-subrepresentations of $\bH_n$.
More precisely, by the formula \eqref{eq:sotworepr}, there are $k_0(\alpha_i)\in \bN \cup \{0\}$, $k_1(\alpha_i)\ldots, k_{s_i}(\alpha_i) \in \bN$, $m_1(\alpha_i),\ldots, m_{s_i}(\alpha_i)\in \{m_1,\ldots,m_r\}$ such that
\begin{equation}\label{eq:opisVA0}
V_A(\alpha_i)\equiv_{S^1} \bR[k_0(\alpha_i),0]\oplus\bigoplus_{j=1}^{s_i} \bR[k_j(\alpha_i),m_j(\alpha_i)].
\end{equation}
Therefore, by Lemma \ref{Lem:Hn}, each $\cV_A^n(\alpha_i)$ can be decomposed as a $T^2$-representation as:
\begin{equation}\label{eq:opisVA}
\cV_A^n(\alpha_i)\equiv_{T^2} \bR[k_0(\alpha_i),(0,n)]\oplus\bigoplus_{j=1}^{s_i} (\bR[k_j(\alpha_i),(m_j(\alpha_i),n)] \oplus \bR[k_j(\alpha_i),(-m_j(\alpha_i),n)]).
\end{equation}

In our computations we will use the formulas characterizing the actions of the linear operators $\nabla^2\Phi(u_0,\lambda)$ on the representations $\bH_n$.
From Lemma 5.1.1 in \cite{FRR} it follows that for $u(t) = a_n \cos nt+b_n \sin nt \in \bH_n$, where $n\geq 0$, we have
\begin{equation}\label{eq:hesjan}
\nabla^2\Phi(u_0,\lambda)(u)(t)=\left(\frac{n^2}{n^2+1}Id - \frac{\lambda^2}{n^2+1}A\right) a_n \cos nt+ \left(\frac{n^2}{n^2+1}Id - \frac{\lambda^2}{n^2+1}A\right)b_n \sin nt.
\end{equation}

\subsection{Global bifurcation}\label{sec:glob}
Recall that $2\pi$-periodic solutions of the system \eqref{eq:newtonian} are in one-to-one correspondence with critical points of the functional $\Phi$ given by the formula \eqref{eq:Phi}. In other words, instead of solutions of the system \eqref{eq:newtonian} we consider
solutions of the equation
\begin{equation}\label{eq:crit}
\nabla_u \Phi(u, \lambda)=0.
\end{equation}

%From assumption (a2) it follows that $(U')^{-1}(0)\cap \Omega=\{u_0,u_1,\ldots, u_q\}$. Observe that since this set is finite, it is contained in $\Omega^{S^1}.$ By abuse of notation we denote by $u_i$ the constant function with the value $u_i.$  Therefore  $(\nabla_u\Phi)^{-1}(0) \cap \h^1_{2\pi}(\Omega)^{T^2} = \{u_0,u_1, \ldots, u_q\} \times (0, +\infty).$  We denote this set by $\cT$ and call its elements the trivial solutions of the equation \eqref{eq:crit}, whereas the elements of

Consider the sets $\cT, \cN\cT$ defined in Section \ref{sec:formulation}. Observe that $\cT=\{u_0, \ldots, u_q\} \times (0, +\infty)\subset \h^1_{2\pi}(\Omega)\times (0, +\infty)$ and
$$\cN\cT=\{(u, \lambda) \in \hone(\Omega) \times (0, +\infty) \colon \nabla_u\Phi(u,\lambda)=0, (u,\lambda)\notin \cT\}.$$
%are called the nontrivial solutions.
In this section we want to study the bifurcation of
nontrivial solutions of the equation \eqref{eq:crit} from the set of trivial ones. For this purpose we formulate the following definition:

\begin{Definition}\label{def:globbif}
 A point $(u_{i_0},\lambda_0)\in \cT$ is called a point of a global bifurcation of nontrivial solutions of the equation \eqref{eq:crit} if there is a closed connected component $\cC(u_{i_0},\lambda_0)\subset \hone(\Omega) \times (0, +\infty)$ of $\cl(\cN\cT)$ containing $(u_{i_0},\lambda_0)$ and such that either $\cC(u_{i_0},\lambda_0)$ is not compact or $\cC(u_{i_0},\lambda_0)$ is compact and $\cC(u_{i_0},\lambda_0)\cap  (\cT\setminus \{(u_{i_0},\lambda_0)\}) \neq \emptyset$.
\end{Definition}

Below we formulate known conditions for a global bifurcation. We start with the necessary one. To formulate this condition we observe that a bifurcation from $(u_i, \lambda)$ can occur only when this point is an accumulation point of nontrivial solutions of \eqref{eq:crit}. Put $$\Lambda(u_i) = \left\{\frac{k}{\beta}\colon k \in \bN, \beta \in (0,+\infty) \text{ and } \beta^2 \in \sigma(U''(u_i))\right\}$$ for $i=0,\ldots, q$. Then, from Theorem 3.2.1 of \cite{PRS2} we have:

\begin{Fact}\label{fact:necessary}
If $(u_i, \lambda) \in \cl(\cN\cT)$, then $\lambda \in \Lambda(u_i).$
\end{Fact}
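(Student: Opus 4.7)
The plan is to argue by contraposition: assuming $\lambda \notin \Lambda(u_i)$, I would show that $(u_i, \lambda)$ cannot be an accumulation point of $\cN\cT$. The strategy is to linearize $\nabla_u \Phi$ at $u_i$, establish invertibility on the subspace of non-constant loops, perform a Lyapunov--Schmidt reduction to $\bH_0$, and invoke (a2) to eliminate nonstationary solutions locally.

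By \eqref{eq:hesjan}, the Hessian $L := \nabla^2\Phi(u_i, \lambda)$ restricted to each $\bH_n$ with $n \geq 1$ acts on the Fourier coefficients as $\frac{1}{n^2+1}(n^2 Id - \lambda^2 A)$, where $A = U''(u_i)$. This operator is invertible iff $n^2 \neq \lambda^2 \alpha$ for every $\alpha \in \sigma(A)$. For $\alpha \leq 0$ this holds automatically, while for $\alpha > 0$ it is exactly the hypothesis $\lambda \notin \Lambda(u_i)$. Therefore $L$ is an isomorphism on the closed, $T^2$-invariant complement $\bH_0^\perp := \overline{\bigoplus_{n \geq 1} \bH_n}$ of the constant loops in $\hone$, irrespective of whether $A$ itself is invertible on $\bH_0 \cong \bR^N$.

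Next I would carry out a Lyapunov--Schmidt reduction along the $T^2$-splitting $\hone = \bH_0 \oplus \bH_0^\perp$. Writing $u = u_i + v_0 + v_\perp$ with $v_0 \in \bH_0$, $v_\perp \in \bH_0^\perp$ and projecting $\nabla_u \Phi(u, \mu) = 0$ by the orthogonal projections $P_0, P_\perp$, the invertibility of $L|_{\bH_0^\perp}$ together with the implicit function theorem yields a $C^1$-map $v_\perp = V(v_0, \mu)$ defined near $(0, \lambda)$ with $V(0, \lambda) = 0$ solving the $\bH_0^\perp$-component. A short computation in the $\hone$-inner product gives $\nabla_u \Phi(u_i + v_0, \mu) = -\mu^2 U'(u_i + v_0)$ (viewed as a constant in $\bH_0$) for every $v_0 \in \bH_0$, so the ansatz $v_\perp \equiv 0$ already solves the $\bH_0^\perp$-equation. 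Local uniqueness of the reduction forces $V(v_0, \mu) \equiv 0$ on a neighborhood of $0$, and the residual bifurcation equation becomes simply $U'(u_i + v_0) = 0$. By (a2), $u_i$ is isolated in $(U')^{-1}(0)$, so $v_0 = 0$ is the only small solution; tracing back, every zero of $\nabla_u \Phi(\cdot, \mu)$ sufficiently close to $u_i$ is a constant, hence stationary. No sequence from $\cN\cT$ can therefore converge to $(u_i, \lambda)$, contradicting $(u_i, \lambda) \in \cl(\cN\cT)$.

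The step requiring the most care is establishing $V(v_0, \mu) \equiv 0$ on an entire $\bH_0$-neighborhood, not merely at $v_0 = 0$. This rests on the observation that the $\hone$-gradient of $\Phi$ evaluated at any constant loop has no non-constant Fourier modes (an immediate consequence of $\dot u \equiv 0$ eliminating the kinetic term), followed by the uniqueness clause of the implicit function theorem applied to the $P_\perp$-equation. The whole construction is compatible with the $T^2$-action in \eqref{eq:action} since the splitting $\bH_0 \oplus \bH_0^\perp$ and the projections $P_0, P_\perp$ are $T^2$-equivariant.
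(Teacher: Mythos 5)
Your argument is correct. Note, however, that the paper does not actually supply a proof of this Fact: it simply cites Theorem~3.2.1 of \cite{PRS2}. So the comparison here is between your self-contained proof and an external reference rather than a proof appearing in the text.

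Your proof is the standard linearization/Lyapunov--Schmidt argument, and each step checks out. The observation that $\lambda\notin\Lambda(u_i)$ is exactly the condition $n^2\neq\lambda^2\alpha$ for all $n\geq 1$ and $\alpha\in\sigma(U''(u_i))$ (trivially satisfied for $\alpha\leq 0$) is precisely what makes $\cL=\nabla^2\Phi(u_i,\lambda)$ an isomorphism of $\overline{\bigoplus_{n\geq 1}\bH_n}$; since $\cL$ is a compact perturbation of the identity preserving each $\bH_n$, triviality of the kernel on $\bH_0^{\perp}$ does give invertibility there. The key simplification --- that $\nabla_u\Phi(u_i+v_0,\mu)$ lies entirely in $\bH_0$ whenever $u_i+v_0$ is constant, because $\int_0^{2\pi}(c,w(t))\,dt=0$ for constant $c$ and $w\in\bH_n$, $n\geq 1$ --- is exactly what collapses the implicit-function branch to $V\equiv 0$, reducing the bifurcation equation to $U'(u_i+v_0)=0$, which by (a2) has $v_0=0$ as its only small solution. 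This rules out nonstationary solutions in an $\bH^1_{2\pi}$-neighborhood of $(u_i,\lambda)$, which is the right topology here since at the point in the paper where this Fact is stated, $\cN\cT$ and its closure are taken in $\bH^1_{2\pi}(\Omega)\times(0,+\infty)$. (The remark on $T^2$-equivariance of the splitting is true but not needed for this necessary condition.) In short, you have replaced a citation by a complete, correct proof using precisely the machinery (the Hessian formula \eqref{eq:hesjan} and the $\bH_0\oplus\bH_0^{\perp}$ splitting) that the paper sets up in Section~\ref{sec:var}.
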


As a consequence of the above fact we obtain the following necessary condition for a global bifurcation:
\begin{Corollary}
 If a global bifurcation occurs from $(u_i, \lambda)$, then $\lambda \in \Lambda(u_i).$
\end{Corollary}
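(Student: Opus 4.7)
The plan is to derive this directly from Fact \ref{fact:necessary} by unpacking Definition \ref{def:globbif}. Suppose a global bifurcation of nontrivial solutions of \eqref{eq:crit} occurs from $(u_i,\lambda)\in\cT$. By definition this produces a closed connected component $\cC(u_i,\lambda)\subset \hone(\Omega)\times(0,+\infty)$ of $\cl(\cN\cT)$ that contains the point $(u_i,\lambda)$. In particular, and this is the only feature of the definition we actually need, the bifurcation point itself lies in $\cl(\cN\cT)$.

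Once that membership is recorded, Fact \ref{fact:necessary} applies verbatim to $(u_i,\lambda)$ and yields $\lambda\in\Lambda(u_i)$, which is the conclusion sought. No further analysis of the component $\cC(u_i,\lambda)$ (compactness, intersection with $\cT\setminus\{(u_i,\lambda)\}$, etc.) is required; those ingredients of Definition \ref{def:globbif} are used elsewhere but are irrelevant for the present implication.

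There is essentially no obstacle: the corollary is a pure reformulation that translates the preceding necessary condition for accumulation of nontrivial solutions into the language of global bifurcation points. The only thing to be a bit careful about is ensuring that the parameter $\lambda$ lies in $(0,+\infty)$, so that Fact \ref{fact:necessary} is indeed applicable as stated, but this is built into the ambient space $\hone(\Omega)\times(0,+\infty)$ used in Definition \ref{def:globbif}. Hence the proof amounts to two lines: \emph{global bifurcation at $(u_i,\lambda)$ $\Rightarrow$ $(u_i,\lambda)\in\cl(\cN\cT)$ $\Rightarrow$ $\lambda\in\Lambda(u_i)$}.
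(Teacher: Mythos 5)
Your proof is correct and matches the paper's (implicit) reasoning exactly: Definition \ref{def:globbif} forces $(u_i,\lambda)\in\cl(\cN\cT)$, and Fact \ref{fact:necessary} then gives $\lambda\in\Lambda(u_i)$. The paper offers no further proof beyond labeling this a consequence of Fact \ref{fact:necessary}, so your unpacking is precisely the intended argument.
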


To obtain the sufficient condition we are going to apply the degree for $T^2$-equivariant gradient maps. Let $u_{i_0}\in (U')^{-1}(0)$ be such that $\Lambda(u_{i_0})\neq \emptyset$. Since the set $\Lambda(u_{i_0})$ does not have finite accumulation points,  there exist $\lambda_0 \in \Lambda(u_{i_0})$ and $\varepsilon>0$ such that $[\lambda_0 - \varepsilon, \lambda_0+\varepsilon]\cap \Lambda(u_{i_0}) = \{\lambda_0\}.$
Using Fact \ref{fact:necessary} we observe that the condition $\lambda_0 \pm \varepsilon \notin \Lambda(u_{i_0})$ implies that there is $\delta>0$ such that  $\nabla_u \Phi(\cdot, \lambda_0 \pm \varepsilon)^{-1}(0)\cap B_{\delta}(u_{i_0}, \bH^1_{2\pi}(\Omega))=\{u_{i_0}\}$. Hence the degrees $\degttwo(\nabla_u \Phi(\cdot, \lambda_0 \pm \varepsilon), B_{\delta}(u_{i_0}, \bH^1_{2\pi}(\Omega)))$ are well-defined. For such  $\delta$, we have   $B_{\delta}(u_{i_0}, \bH^1_{2\pi}(\Omega))=B_{\delta}(u_{i_0}, \bH^1_{2\pi}).$ Therefore we can define the bifurcation index $\bif_{T^2} (u_{i_0},\lambda_0)\in U(T^2)$ by:
\begin{equation}\label{eq:indbif}
\begin{split}
&\bif_{T^2} (u_{i_0},\lambda_0)=\\
&=\mathrm{deg}^{\nabla}_{T^2}(\nabla_u \Phi(\cdot,\lambda_0+\varepsilon), B_{\delta}(u_{i_0},\hone))-\mathrm{deg}^{\nabla}_{T^2}(\nabla_u \Phi(\cdot,\lambda_0-\varepsilon), B_{\delta}(u_{i_0},\hone)).
\end{split}
\end{equation}

Nontriviality of this bifurcation index implies the global bifurcation. More precisely, we have the following version of the Rabinowitz global bifurcation theorem, see Theorem 4.9 of \cite{Ryb2005milano}:
\begin{Theorem}\label{thm:biF_in_H}
If $\bif_{T^2} (u_{i_0},\lambda_0)\neq \Theta\in U(T^2)$, then a global bifurcation of solutions of the equation \eqref{eq:crit} occurs from $(u_{i_0},\lambda_0)$.
\end{Theorem}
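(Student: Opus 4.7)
The plan is the classical Rabinowitz alternative argument, adapted to the $T^2$-equivariant gradient setting, proceeding by contradiction. Suppose no global bifurcation occurs at $(u_{i_0},\lambda_0)$. By Definition \ref{def:globbif} this means that either $(u_{i_0},\lambda_0)\notin\cl(\cN\cT)$, or the component $\cC:=\cC(u_{i_0},\lambda_0)$ is compact in $\hone(\Omega)\times(0,+\infty)$ and disjoint from $\cT\setminus\{(u_{i_0},\lambda_0)\}$. In both situations I would produce an open, bounded, $T^2$-invariant neighborhood $\cU\subset\hone(\Omega)\times(0,+\infty)$ of $\cC$ satisfying: (i) $\partial\cU\cap\cl(\cN\cT)=\emptyset$, and (ii) $\cU\cap\cT=\{u_{i_0}\}\times J$ for some open interval $J\ni\lambda_0$ with $[\lambda_0-\varepsilon,\lambda_0+\varepsilon]\subset J$ and $J\cap\Lambda(u_{i_0})=\{\lambda_0\}$.

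The first case is immediate: a small cylinder $B_\delta(u_{i_0},\hone)\times J$ does the job. In the second case I would invoke a Whyburn-type separation. Because $\nabla_u\Phi(\cdot,\lambda)$ is a completely continuous perturbation of the identity, any closed bounded subset of $\cl(\cN\cT)$ is compact, so $\cl(\cN\cT)$ is locally compact in $\hone(\Omega)\times(0,+\infty)$. The compact connected component $\cC$ of the closed set $\cl(\cN\cT)\cup\cT$ therefore admits a relatively clopen isolating neighborhood inside a suitable compact superset. Taking its $T^2$-saturation preserves property (i) thanks to the $T^2$-invariance of $\cl(\cN\cT)$; intersecting with a slab $\hone(\Omega)\times J$ secures property (ii), using that $\Lambda(u_{i_0})$ is discrete and that $\cC$ is isolated from the other trivial branches.

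With $\cU$ in hand, I pass to the slices $\cU_\lambda:=\{u\in\hone(\Omega):(u,\lambda)\in\cU\}$ for $\lambda\in J$. Property (i) guarantees that $\nabla_u\Phi(\cdot,\lambda)$ has no zeros on $\partial\cU_\lambda$, so $\degttwo(\nabla_u\Phi(\cdot,\lambda),\cU_\lambda)$ is well-defined and, by the homotopy invariance of the $T^2$-equivariant gradient degree (recalled in Subsection \ref{degg}), independent of $\lambda\in J$. At $\lambda=\lambda_0\pm\varepsilon$, Fact \ref{fact:necessary} together with property (ii) forces $u_{i_0}$ to be the only zero of $\nabla_u\Phi(\cdot,\lambda_0\pm\varepsilon)$ in $\cU_{\lambda_0\pm\varepsilon}$. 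Excision therefore gives
\[
\degttwo(\nabla_u\Phi(\cdot,\lambda_0\pm\varepsilon),\cU_{\lambda_0\pm\varepsilon})=\degttwo(\nabla_u\Phi(\cdot,\lambda_0\pm\varepsilon),B_\delta(u_{i_0},\hone))
\]
for sufficiently small $\delta$. Equating the two endpoint values and substituting into the definition \eqref{eq:indbif} yields $\bif_{T^2}(u_{i_0},\lambda_0)=\Theta$, contradicting the hypothesis.

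The only non-mechanical step, and the one I expect to be the main obstacle, is the Whyburn-type construction of $\cU$: producing a $T^2$-invariant isolating neighborhood whose boundary is completely free of elements of $\cl(\cN\cT)$ and whose intersection with $\cT$ is exactly a single branch $\{u_{i_0}\}\times J$. Local compactness coming from complete continuity of the nonlinear part of $\nabla_u\Phi$ and $T^2$-invariance of $\cl(\cN\cT)$ are precisely what make this separation, followed by symmetrisation, possible. After that, the chain of excisions and homotopies is a direct application of the abstract properties of $\degttwo$.
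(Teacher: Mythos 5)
The paper does not reprove this statement; it simply cites Theorem~4.9 of \cite{Ryb2005milano}, so any self-contained argument is a welcome addition. Your general strategy (Whyburn-type isolation of the compact component, local compactness from complete continuity, $T^2$-saturation, then a degree-theoretic continuation along the cylinder) is the right one, and the reduction in the non-bifurcation case to the small cylinder is fine. However, there is a genuine gap in the last step.

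The flaw is in the sentence ``At $\lambda=\lambda_0\pm\varepsilon$, Fact~\ref{fact:necessary} together with property~(ii) forces $u_{i_0}$ to be the only zero of $\nabla_u\Phi(\cdot,\lambda_0\pm\varepsilon)$ in $\cU_{\lambda_0\pm\varepsilon}$,'' and in the excision that follows from it. Property~(ii) excludes other \emph{trivial} zeros, and Fact~\ref{fact:necessary} only tells you that no \emph{nontrivial} solutions cluster at $u_{i_0}$ when $\lambda_0\pm\varepsilon\notin\Lambda(u_{i_0})$; neither rules out nontrivial zeros of $\nabla_u\Phi(\cdot,\lambda_0\pm\varepsilon)$ inside $\cU_{\lambda_0\pm\varepsilon}$ but bounded away from $u_{i_0}$. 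This happens as soon as the continuum $\cC$ reaches past the level $\lambda_0\pm\varepsilon$ (e.g.\ if $\cC$ is a closed loop through $(u_{i_0},\lambda_0)$ extending on both sides of $\lambda_0$). In that case $\cU_{\lambda_0\pm\varepsilon}$ contains extra zeros, the degrees $\degttwo(\nabla_u\Phi(\cdot,\lambda_0\pm\varepsilon),\cU_{\lambda_0\pm\varepsilon})$ and $\degttwo(\nabla_u\Phi(\cdot,\lambda_0\pm\varepsilon),B_\delta(u_{i_0},\hone))$ differ by a nontrivial additivity term, and the proposed excision identity fails.

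The standard way to repair this: notice that the compact set $\Sigma_0:=\cl(\cN\cT)\cap\cl(\cU)$ lies strictly inside the slab $\hone(\Omega)\times J$, so its $\lambda$-projection is contained in some $[\tilde a,\tilde b]\Subset J$. For $\lambda\in J$ below $\tilde a$ (and similarly above $\tilde b$), $\cU_\lambda$ contains no nontrivial solutions at all, and then excision honestly gives $\degttwo(\nabla_u\Phi(\cdot,\lambda),\cU_\lambda)=\degttwo(\nabla_u\Phi(\cdot,\lambda),B_\delta(u_{i_0},\hone))$ at \emph{those} levels. Combining this with (a) the constancy of $\degttwo(\nabla_u\Phi(\cdot,\lambda),\cU_\lambda)$ over $J$ which you already established, and (b) the constancy of the local $T^2$-gradient index at $u_{i_0}$ on each of the two connected components of $J\setminus\{\lambda_0\}$ (which holds because these subintervals are disjoint from $\Lambda(u_{i_0})$ and hence, via local compactness, the local index is well defined and locally constant there), you conclude $\degttwo(\nabla_u\Phi(\cdot,\lambda_0+\varepsilon),B_\delta(u_{i_0},\hone))=\degttwo(\nabla_u\Phi(\cdot,\lambda_0-\varepsilon),B_\delta(u_{i_0},\hone))$, hence $\bif_{T^2}(u_{i_0},\lambda_0)=\Theta$, the desired contradiction. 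So the idea is salvageable, but the excision must be performed at $\lambda$ below/above the $\lambda$-support of $\Sigma_0$, not at $\lambda_0\pm\varepsilon$; omitting that displacement is the missing step.
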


Note that the bifurcation result given in Theorem \ref{thm:biF_in_H} is obtained in the space $\bH^1_{2\pi}(\Omega) \times (0, +\infty)$. To increase its readability, we reformulate it in the space of continuous functions.

\begin{Corollary}\label{thm:bif_in_cont}
If $\bif_{T^2} (u_{i_0},\lambda_0)\neq \Theta\in U(T^2)$, then the global bifurcation of solutions of the system \eqref{eq:newtonian} occurring from $(u_{i_0},\lambda_0)$, takes place in $C_{2 \pi}([0,2\pi],\Omega) \times (0,+\infty)$.
More precisely, there is a closed connected component $\cC(u_{i_0},\lambda_0)\subset C_{2 \pi}([0,2\pi],\Omega) \times (0,+\infty)$ of $\cl(\cN\cT)$ containing $(u_{i_0},\lambda_0)$ and such that either $\cC(u_{i_0},\lambda_0)$ is not compact in $C_{2 \pi}([0,2\pi],\Omega) \times (0,+\infty)$ or $\cC(u_{i_0},\lambda_0)$ is compact and $\cC(u_{i_0},\lambda_0)\cap  (\cT\setminus \{(u_{i_0},\lambda_0)\}) \neq \emptyset$.
\end{Corollary}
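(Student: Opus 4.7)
The plan is to lift the global bifurcation statement from Theorem \ref{thm:biF_in_H}, which is phrased in $\bH^1_{2\pi}(\Omega)\times(0,+\infty)$, to the continuous function setting $C_{2\pi}([0,2\pi],\Omega)\times(0,+\infty)$. The two tools are the continuous (in fact compact) Sobolev inclusion $j\colon\bH^1_{2\pi}\hookrightarrow C_{2\pi}([0,2\pi],\bR^N)$ coming from $\|u\|_{C^0}\leq c\|u\|_{\bH^1_{2\pi}}$, and an ODE-based regularity upgrade allowing $C^0$ estimates to be turned into $\bH^1_{2\pi}$ estimates on the solution set.

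First I would apply Theorem \ref{thm:biF_in_H} to get a closed connected component $\cC^{(H)}(u_{i_0},\lambda_0)\subset\bH^1_{2\pi}(\Omega)\times(0,+\infty)$ of $\cl(\cN\cT)$ (in the $\bH^1$-setting) that contains $(u_{i_0},\lambda_0)$ and satisfies the Rabinowitz alternative. Setting $J=j\times\mathrm{id}$, the image $J(\cC^{(H)}(u_{i_0},\lambda_0))$ is connected in $C_{2\pi}([0,2\pi],\Omega)\times(0,+\infty)$. Every $(u,\lambda)$ therein is a classical $2\pi$-periodic solution of \eqref{eq:newtonian}: since $\nabla_u\Phi(u,\lambda)=0$ with $u\in\bH^1_{2\pi}(\Omega)$, a standard bootstrap via $\ddot u=-\lambda^2 U'(u)$ gives $u\in C^2$. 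I then let $\cC(u_{i_0},\lambda_0)$ be the connected component of $\cl(\cN\cT)$ in $C_{2\pi}([0,2\pi],\Omega)\times(0,+\infty)$ through $(u_{i_0},\lambda_0)$; by maximality it contains $J(\cC^{(H)}(u_{i_0},\lambda_0))$.

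The Rabinowitz alternative for $\cC^{(H)}(u_{i_0},\lambda_0)$ must now be transported. If $\cC^{(H)}(u_{i_0},\lambda_0)\cap(\cT\setminus\{(u_{i_0},\lambda_0)\})\neq\emptyset$, then so does $\cC(u_{i_0},\lambda_0)$, because $\cT$ is defined identically in both function spaces and $J$ is the identity on constants. The hard part is non-compactness: suppose, towards a contradiction, that $\cC(u_{i_0},\lambda_0)$ is compact in $C_{2\pi}([0,2\pi],\Omega)\times(0,+\infty)$ while $\cC^{(H)}(u_{i_0},\lambda_0)$ is non-compact. Then the projection of $\cC(u_{i_0},\lambda_0)$ onto $(0,+\infty)$ is a compact subset $[\lambda_{\min},\lambda_{\max}]\subset(0,+\infty)$, and evaluation on the compact set $\cC(u_{i_0},\lambda_0)\times[0,2\pi]$ produces a compact set $\tilde K\subset\Omega$ containing all values $u(t)$. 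The ODE then yields $\|\ddot u\|_{C^0}\leq\lambda_{\max}^2\sup_{\tilde K}|U'|$, so the functions in $\cC(u_{i_0},\lambda_0)$ are uniformly bounded in $C^2$.

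By Arzelà--Ascoli the function-component of $\cC(u_{i_0},\lambda_0)$ is relatively compact in $C^1$, hence in $\bH^1_{2\pi}$. Because the $C^0$-topology is weaker than the $\bH^1_{2\pi}$-topology, $\cC(u_{i_0},\lambda_0)$ is also closed in $\bH^1_{2\pi}(\Omega)\times(0,+\infty)$, so it is compact there. Since $\cC^{(H)}(u_{i_0},\lambda_0)$ is a closed subset of $\cC(u_{i_0},\lambda_0)$ in the $\bH^1$-topology, it would be compact too, contradicting Theorem \ref{thm:biF_in_H}. This completes the transfer of the alternative and yields the corollary. The main obstacle is precisely this last step: a priori a continuum can be bounded in $C^0$ yet unbounded in $\bH^1_{2\pi}$, and it is the ODE combined with Arzelà--Ascoli that reconciles the two notions of compactness on the solution set.
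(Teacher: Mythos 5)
Your proof is correct but takes a genuinely different route from the paper. The paper establishes the corollary by showing directly that the $C^0$- and $\bH^1_{2\pi}$-topologies \emph{coincide} on the solution set $\cN\cT\cup\cT$: it proves that a $C^0$-convergent sequence of solutions also converges in $\bH^1_{2\pi}$ via an $L^2$ energy estimate, integrating by parts in $\|\dot u_n - \dot{\bar u}\|^2_{L^2}$ against the weak formulation of $\ddot u = -\lambda^2 U'(u)$ and using $\|u_n-\bar u\|_\infty\to 0$. Once the two notions of convergence are identified on the solution set, closed sets, compact sets, and connected components are the same in both norms and the corollary is immediate. You instead argue by contradiction and use the ODE pointwise: if the $C^0$-continuum were compact, its $\lambda$-range and the set of values $u(t)$ would be compact, so $\ddot u=-\lambda^2 U'(u)$ gives uniform $C^2$ bounds, Arzel\`a--Ascoli gives relative $C^1$ (hence $\bH^1_{2\pi}$) compactness, and the $\bH^1_{2\pi}$-continuum, being closed, would be compact, contradicting the assumed non-compactness. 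Both approaches hinge on the regularizing effect of the equation; the paper's is ``variational'' (energy estimate from the weak formulation), yours is ``ODE plus compactness'' (classical regularity plus Arzel\`a--Ascoli). The paper's identification of topologies is a stronger statement and makes the alternative transfer trivially, while your contradiction argument is slightly more roundabout but equally valid.

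A few minor points you should spell out. First, to get the uniform $C^1$ bound from the uniform bound on $\ddot u$ you need periodicity: each component of $\dot u$ has mean zero over $[0,2\pi]$, hence vanishes somewhere, so $\|\dot u\|_\infty\le 2\pi\|\ddot u\|_\infty$. Second, you only argue that elements of $J(\cC^{(H)})$ are classical solutions, but the Arzel\`a--Ascoli step requires this for all of $\cC(u_{i_0},\lambda_0)\subset \cl_{C^0}(\cN\cT)$; you should record that a uniform limit of solutions of \eqref{eq:newtonian} is again a solution (this is exactly the observation the paper makes before its estimate), and then apply the bootstrap. Third, the final sentence should say the contradiction is with the assumed non-compactness of $\cC^{(H)}(u_{i_0},\lambda_0)$, not with Theorem \ref{thm:biF_in_H} itself, which only supplies the Rabinowitz alternative in $\bH^1_{2\pi}$.
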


\begin{proof}

%First of all note that the closure of $\cN\cT$ is the same in $C_{2\pi}( \Omega)\times (0,+\infty)$ as in $\bH^1_{2\pi}(\Omega) \times (0,+\infty)$ and it is contained in $\cN\cT\cup\cT$, since the limit of a uniformly convergent sequence of solutions is a solution.

The fact that closed and compact sets in $\cN\cT\cup\cT$ are the same in the norms $\|\cdot\|_{\infty}$ and  $\|\cdot\|_{\bH^1_{2\pi}}$ follows from the observation that the convergence in $\cN\cT\cup\cT$ in $\|\cdot\|_{\infty}$  is equivalent to the convergence in  $\|\cdot\|_{\bH^1_{2\pi}}$.

To prove this equivalence, recall first  that $\bH^1_{2\pi}(\Omega)$ is continuously embedded in $C_{2 \pi}([0,2\pi],\Omega)$ (see Proposition 1.1 of \cite{MawWil}).
From this we immediately obtain that the convergence in $\bH^1_{2\pi}(\Omega) \times (0,+\infty)$ implies the convergence in $C_{2 \pi}([0,2\pi],\Omega)  \times (0,+\infty)$.

%Since $\bif_{T^2} (u_{i_0},\lambda_0)\neq \Theta$, applying Theorem \ref{thm:biF_in_H} we obtain a bifurcation of a continuum $\cC(u_{i_0}, \lambda_0)\subset \bH^1_{2\pi}(\Omega) \times (0,+\infty)\subset C_{2\pi}( \Omega)\times (0,+\infty)$.
%To finish the proof we will show that the compactness of $\cC(u_{i_0}, \lambda_0)$ in $\bH^1_{2\pi}(\Omega) \times (0,+\infty)$ is equivalent to its compactness in $C_{2\pi}( \Omega) \times (0,+\infty)$. To that end, recall first  that $\bH^1_{2\pi}(\Omega)$ is continuously embedded in $C_{2\pi}( \Omega)$ (see Proposition 1.1 of \cite{MawWil}). From this we immediately obtain that if the continuum $\cC(u_{i_0}, \lambda_0)$ is compact in $\bH^1_{2\pi}(\Omega) \times (0,+\infty)$, it is also compact in $C_{2\pi}( \Omega) \times (0,+\infty)$.

To show the opposite, suppose that $\{(u_n,\lambda_n)\}\subset\cN\cT\cup\cT\subset C_{2 \pi}([0,2\pi],\Omega)  \times (0,+\infty)$ is convergent to $(\bar{u},\bar{\lambda})$. Since the limit of a uniformly convergent sequence of solutions of the system \eqref{eq:newtonian} is also a solution, $(\bar{u},\bar{\lambda})\in \bH^1_{2\pi}(\Omega) \times (0,+\infty)\cap (\cN\cT\cup\cT)$.

%Then, as we have already stated, the limit $(\bar{u},\bar{\lambda})$ also solves the system.

What is left is to show that the sequence $\{(u_n,\lambda_n)\}$ also converges to $(\bar{u},\bar{\lambda})$ in $\bH^1_{2\pi}(\Omega) \times (0,+\infty)$.
A standard argument shows that
\begin{equation}\label{eq:convinC}
\|u_n-\bar{u}\|_{L^2}\leq 2 \pi \|u_n-\bar{u}\|_{\infty}.
\end{equation}
Moreover, since $u_n$, $\bar{u}$ are weak solutions of the system \eqref{eq:newtonian}, we get
\begin{equation*}
\begin{split}
  \|\dot u_n-\dot{\bar{u}}\|_{L^2}^2=&
\int_0^{2\pi} \left(\dot u_n(t)-\dot{\bar{u}}(t),\dot u_n(t)-\dot{\bar{u}}(t)\right)  dt\\
 = &\int_0^{2\pi}\left(u_n(t)-\bar{u}(t), \lambda_n^2U'( u_n(t))- \bar{\lambda}^2U'(\bar{u}(t))\right) dt  \\ \leq&
\int_0^{2\pi}  | u_n(t)-\bar{u}(t)| \cdot |\lambda_n^2U'( u_n(t))-\bar{\lambda}^2 U'(\bar{u}(t))| dt \\
\leq &  \|u_n-\bar{u}\|_{\infty} \int_0^{2\pi} |U'( u_n(t))|\cdot|\lambda_n^2-\bar{\lambda}^2|+\bar{\lambda}^2 |U'( u_n(t))-U'(\bar{u}(t))| dt.
\end{split}
\end{equation*}
Since $U'$ is continuous and $\{u_n\}$ is uniformly convergent to $\bar{u}$, and in particular the sequence $\{(|U'( u_n(t))|)\}$ is bounded, the integrand converges to 0. Combining this with the equation \eqref{eq:convinC}, we obtain $(u_n,\lambda_n)\to (\bar{u},\bar{\lambda})$ in $\bH^1_{2\pi}(\Omega) \times (0,+\infty)$. This finishes the proof.
\end{proof}

\begin{Remark}
Suppose that $\cC(u_{i_0},\lambda_0)\subset \hone(\Omega) \times (0, +\infty)$ is a continuum of solutions of the equation \eqref{eq:crit} bifurcating from $(u_{i_0},\lambda_0)$ and
assume that it is not compact. Reasoning as in Step 2 of the proof of Theorem 3.3 of \cite{danryb}, one can show that $\cC(u_{i_0},\lambda_0)$ cannot reach the level $\lambda=0$. More precisely, if $(u_n,\lambda_n)\in\hone(\Omega) \times (0, +\infty)$ is a uniformly bounded sequence of solutions, then  $\lambda_n$ cannot converge to $0$ as $n\to\infty$.
\end{Remark}

\subsection{Proof of Theorem \ref{thm:main}}\label{sec:proofmain}

In this subsection we give the proof of Theorem \ref{thm:main}.

First of all observe that the assumption (a3.1) implies that $\Lambda(u_0) \neq \emptyset.$ Fix $\lambda_0 \in \Lambda(u_0)$ %From the definition of $\Lambda(u_0)$ it follows that this set does not contain finite accumulation points and therefore there exists
and $\varepsilon >0$ such that $[\lambda_0 - \varepsilon, \lambda_0+ \varepsilon] \cap \Lambda(u_0) = \{\lambda_0\}.$ Therefore, the bifurcation index $\bifttwo(u_0, \lambda_0)\in U(T^2)$ given by the formula \eqref{eq:indbif} is well-defined.
From Corollary \ref{thm:bif_in_cont} it follows that to prove the assertion it is enough to show the nontriviality of this index, i.e. to show
\begin{equation*}
%\begin{split}
\bif_{T^2} (u_{0},\lambda_0)
=\mathrm{deg}^{\nabla}_{T^2}(\nabla \Phi_+, B_{\delta}(u_0,\hone))-\mathrm{deg}^{\nabla}_{T^2}(\nabla \Phi_-, B_{\delta}(u_0,\hone))
 \neq \Theta\in U(T^2),
% \end{split}
 \end{equation*}
where $\Phi_{\pm}(u)=\Phi(u, \lambda_0 \pm \varepsilon)$.

To that end we are going to compute these degrees. Without loss of generality, we assume that $u_0=0$. In particular, all the considered balls will be centered at $u_0=0$.

The critical point $u_{0}$ of $\Phi_{\pm}$ does not have to be nondegenerate. That is why to compute the degrees $\degttwo(\nabla \Phi_{\pm}, B_{\delta}( \hone ))$ we will apply Splitting Lemma, see Lemma 3.2 of \cite{FRR}. %in order to simplify the computation. Let us precise it.
Put $\cL_{\pm}=\nabla^2\Phi_{\pm}(u_0) \colon \bH^1_{2\pi} \to \bH^1_{2\pi}.$
Since $\lambda_0 \pm \varepsilon \notin \Lambda(u_0)$, by the formula \eqref{eq:hesjan} we obtain $\ker(\cL_+)=\ker(\cL_-)\subset \bH_0$ and consequently, by self-adjointness of $\cL_{\pm}$, $\im(\cL _+)=\im(\cL_-).$ We denote such kernel and image by $\cN$ and $\cR$, respectively.
 From Splitting Lemma we obtain the existence of $\gamma>0$ and $T^2$-equivariant, $B_{\gamma}(\cN) \times B_{\gamma}(\cR)$-admissible homotopies connecting operators $\nabla \Phi_{\pm}$ with product mappings  $(\nabla \varphi_{\pm},(\cL_{\pm})_{|\cR})\colon B_{\gamma}(\cN)\times\cR \to \cN\times\cR$, where $\nabla \varphi_{\pm}$ are some $T^2$-equivariant gradient maps. Without loss of generality we assume that $\gamma = \delta$. The homotopy invariance property of the degree implies that
$$\mathrm{deg}^{\nabla}_{T^2}(\nabla \Phi_{\pm}, B_{\delta}(\hone)) = \mathrm{deg}^{\nabla}_{T^2}((\nabla \varphi_{\pm}, (\cL_{\pm})_{|\cR}), B_{\delta}(\cN) \times B_{\delta}(\cR)).$$
Put $\cR_1=\bH_0\cap\cN^{\bot}$ and $\cR_2 = \cR \cap \cR_1^{\bot}$. Since $(\cL_{\pm})_{|\cR}$ are isomorphisms, and $(\nabla \varphi_{\pm}, (\cL_{\pm})_{|\cR})$ are $B_{\delta}(\cN) \times B_{\delta}(\cR)$-admissible, it is easy to observe that $(\nabla \varphi_{\pm},(\cL_{\pm})_{|\cR_1})$ is $ B_{\delta}(\bH_0)$-admissible and therefore the product formula for the degree for equivariant gradient maps, see Remark \ref{rem:multiplication}, implies that
\begin{equation}\label{eq:splitted}
\mathrm{deg}^{\nabla}_{T^2}(\nabla \Phi_{\pm}, B_{\delta}(\bH^1_{2\pi})) =\mathrm{deg}^{\nabla}_{T^2}((\nabla \varphi_{\pm},(\cL_{\pm})_{|\cR_1}), B_{\delta}(\bH_0)) \star\mathrm{deg}^{\nabla}_{T^2}((\cL_{\pm})_{|\cR_2}, B_{\delta}(\cR_2)).
\end{equation}
 It is known that
$\nabla ({\Phi_{\pm}}_{|\bH_0})=(\nabla {\Phi_{\pm}})_{|\bH_0}$.
Using again the homotopy given by the splitting lemma, restricted to $\bH_0$, we obtain
$$\mathrm{deg}^{\nabla}_{T^2}((\nabla \varphi_{\pm},(\cL_{\pm})_{|\cR_1}), B_{\delta}(\bH_0)) = \degttwo((\nabla {\Phi_{\pm}})_{|\bH_0}, B_{\delta}( \bH_0)).$$

From the definition of $\Phi$, we obtain $(\nabla {\Phi_{\pm}})_{|\bH_0}(u)(t)=-(\lambda_0 \pm \varepsilon)^2 U'(u(t))$. Moreover, by the assumption (a3.2), there exist numbers $\mathfrak{n}_i \in\bZ$, $i=0,1,2,\ldots,$ and at least one of them is nonzero, such that
\begin{equation}\label{eq:froma32}
\deg^{\nabla}_{S^1}(-U',B_{\delta}(\bR^N))=
\mathfrak{n}_0 \cdot \chi_{S^1}(S^1/S^{1 +}) +\sum_{i=1}^{\infty} \mathfrak{n}_{i} \cdot \chi_{S^1}(S^1/\bZ_{i}^+) \in U(S^1)
\end{equation}
and, consequently, by Remark \ref{lem:coefficients},
\begin{equation}\label{eq:deg1}
\degttwo((\nabla {\Phi_{\pm}})_{|\bH_0}, B_{\delta}( \bH_0)) = \mathfrak{n}_0 \cdot \chi_{T^2}(T^2/T^{2 +}) +\sum_{i=1}^{\infty} \mathfrak{n}_{i} \cdot \chi_{T^2}(T^2/H_{(i,0)}^+)\in U_2(T^2)\oplus U_1(T^2).
\end{equation}

%at least one of the numbers $\mathfrak{n}_i$, $i=0,1,2,\ldots$, is nonzero.}
%Therefore, formula \eqref{eq:froma32} and Lemma \ref{lem:coefficients} imply that
%\begin{equation}\label{eq:deg1}
%\degttwo((\nabla \varphi_{\pm},(\cL_{\pm})_{|\cR_1}), B_{\delta}(u_0,\bH_0))=\mathfrak{n}_0 \cdot \chi_{T^2}(T^2/T^{2 +}) +\sum_{i=1}^{\infty} \mathfrak{n}_{i} \cdot \chi_{T^2}(T^2/H_{(i,0)}^+)
%\end{equation}
%and at least one of the numbers $\mathfrak{n}_0, \mathfrak{n}_1, \ldots$ is nonzero.

To compute the latter factor of \eqref{eq:splitted} we use the properties of the degree of an isomorphism. It is known, see Theorem 4.7 of \cite{Ryb2005milano}, that
$$\mathrm{deg}^{\nabla}_{T^2}((\cL_{\pm})_{|\cR_2}, B_{\delta}(\cR_2))= \mathrm{deg}^{\nabla}_{T^2}(-Id, B_{\delta}(\bV^-_{\pm})),$$
where $\bV_{\pm}^-$ is the direct sum of eigenspaces corresponding to negative eigenvalues of $\cL_{\pm}$.
Note that from the spectral properties of completely continuous perturbations of the identity it follows that the $T^2$-representations $\bV_{\pm}^-$ are finite-dimensional.

The precise description of the spaces $\bV_{\pm}^-$ can be obtained from the  characterization of the action of the linear operators $\cL_{\pm}$ on the representations $\bH_n$, see the formula \eqref{eq:hesjan}.
This implies that the eigenvalues of $\cL_{\pm}$ on $\bH_n$ are of the form $\frac{n^2-(\lambda_0 \pm \varepsilon)^2 \alpha}{n^2+1}$, for $\alpha$ being the eigenvalues of $A=U''(u_0)$, and the corresponding eigenspaces are $\cV_A^n(\alpha).$ Therefore
\begin{equation}\label{eq:v-+}
\bV_{\pm}^-=\bigoplus_{n=1}^{\infty} \bigoplus_{\alpha>\frac{n^2}{(\lambda_0 \pm \varepsilon)^2}} \cV_A^n (\alpha).
\end{equation}
Obviously, for $(\lambda_0 \pm \varepsilon)$ and $\alpha$ fixed, only finite number of $n$ satisfies $n^2-(\lambda_0 \pm \varepsilon)^2 \alpha<0$.
Hence there exists $\tilde{n}$ such that $\bV_{\pm}^-=\bigoplus_{n=1}^{\tilde{n}} \bigoplus_{\alpha>\frac{n^2}{(\lambda_0 \pm \varepsilon)^2}} \cV_A^n (\alpha).$

Taking into consideration the choice of $\lambda_0 \pm \varepsilon$ and the definition of the set $\Lambda(u_0)$ we obtain
\begin{equation*}
\bV_+^- = \bV^-_- \oplus \bigoplus_{n=1}^{\tilde{n}}\bigoplus_{\alpha=\frac{n^2}{\lambda^2_{0}}}\cV_A^n(\alpha)=\colon\bV^-_-\oplus\cV.
\end{equation*}
Therefore
\begin{equation}\label{eq:bifind4}
\begin{split}
&\bif_{T^2} (u_0,\lambda_0)=\\
&=\degttwo((\nabla {\Phi_{\pm}})_{|\bH_0}, B_{\delta}( \bH_0)) \star \mathrm{deg}^{\nabla}_{T^2}(-Id, B_{\delta}(\bV^-_-)) \star(\mathrm{deg}^{\nabla}_{T^2}(-Id, B_{\delta}(\cV))-\bI).\end{split}
\end{equation}
Since $\mathrm{deg}^{\nabla}_{T^2}(-Id, B_{\delta}(\bV^-_-))$ is invertible (see \cite[Theorem 3.11]{GeRy} or \cite[Theorem 2.1]{GolRyb1}), to prove the nontriviality of $\bif_{T^2} (u_0,\lambda_0)$ it is enough to show that
\begin{equation}\label{eq:reducedbif}
\degttwo((\nabla {\Phi_{\pm}})_{|\bH_0}, B_{\delta}( \bH_0)) \star(\mathrm{deg}^{\nabla}_{T^2}(-Id, B_{\delta}(\cV))-\bI)\neq\Theta.
\end{equation}
To that end, note first that since the dimension of $\cV$ is even, by the formula \eqref{eq:degminusid} we obtain
\begin{equation}\label{eq:degidminusid}
\mathrm{deg}^{\nabla}_{T^2}(-Id, B_{\delta}(\cV))-\bI\in U_1(T^2)\oplus U_0(T^2).
\end{equation}
Consider the natural projections
$\pi_i\colon U(T^2)=U_2(T^2)\oplus U_1(T^2)\oplus U_0(T^2) \to U_i(T^2)$ for $i=0,1$.
Using Remark \ref{rem:decomp}(ii) and the formulas \eqref{eq:deg1}, \eqref{eq:degidminusid}, we obtain
\begin{equation*}
\begin{split}
&\degttwo((\nabla {\Phi_{\pm}})_{|\bH_0}, B_{\delta}( \bH_0)) \star(\mathrm{deg}^{\nabla}_{T^2}(-Id, B_{\delta}(\cV))-\bI)=\\&=
\left(\mathfrak{n}_0 \cdot \chi_{T^2}(T^2/T^{2+}) +\sum_{i=1}^{\infty} \mathfrak{n}_{i} \cdot \chi_{T^2}(T^2/H_{(i,0)}^+)\right) \star\\
&\star\left(\pi_1\left(\mathrm{deg}^{\nabla}_{T^2}(-Id, B_{\delta}(\cV))-\bI\right)+\pi_0\left(\mathrm{deg}^{\nabla}_{T^2}(-Id, B_{\delta}(\cV))-\bI\right)\right)=\\&=
\mathfrak{n}_0 \cdot\pi_1\left(\mathrm{deg}^{\nabla}_{T^2}(-Id, B_{\delta}(\cV))-\bI\right)+
\mathfrak{n}_0 \cdot\pi_0\left(\mathrm{deg}^{\nabla}_{T^2}(-Id, B_{\delta}(\cV))-\bI\right)+\\
&+\left(\sum_{i=1}^{\infty} \mathfrak{n}_{i} \cdot \chi_{T^2}(T^2/H_{(i,0)}^+)\right)\star\pi_1\left(\mathrm{deg}^{\nabla}_{T^2}(-Id, B_{\delta}(\cV))-\bI\right).
\end{split}
\end{equation*}
Moreover, using the nontriviality of the representation $\cV$, we get
\begin{equation}\label{eq:nontrivU1}
\pi_1\left(\mathrm{deg}^{\nabla}_{T^2}(-Id, B_{\delta}(\cV))-\bI\right)\neq\Theta.
\end{equation}
Note that
$$
\mathfrak{n}_0 \cdot\pi_0\left(\mathrm{deg}^{\nabla}_{T^2}(-Id, B_{\delta}(\cV))-\bI\right)+\left(\sum_{i=1}^{\infty} \mathfrak{n}_{i} \cdot \chi_{T^2}(T^2/H_{(i,0)}^+)\right)\star\pi_1\left(\mathrm{deg}^{\nabla}_{T^2}(-Id, B_{\delta}(\cV))-\bI\right)
$$
is an element of $U_0(T^2)$ by Remark \ref{rem:decomp}(i). Therefore, if  $\mathfrak{n}_0 \neq 0$, then
$$
\pi_1(\degttwo((\nabla {\Phi_{\pm}})_{|\bH_0}, B_{\delta}( \bH_0)) \star(\mathrm{deg}^{\nabla}_{T^2}(-Id, B_{\delta}(\cV))-\bI))=\mathfrak{n}_0 \cdot\pi_1\left(\mathrm{deg}^{\nabla}_{T^2}(-Id, B_{\delta}(\cV))-\bI\right)\neq \Theta
$$
by the formula \eqref{eq:nontrivU1}. This implies the formula \eqref{eq:reducedbif}.

If, on the other hand, $\mathfrak{n}_0=0$, then the nontriviality of the bifurcation index is equivalent to the nontriviality of
$$
\left(\sum_{i=1}^{\infty} \mathfrak{n}_{i} \cdot \chi_{T^2}(T^2/H_{(i,0)}^+)\right)\star\pi_1\left(\mathrm{deg}^{\nabla}_{T^2}(-Id, B_{\delta}(\cV))-\bI\right).
$$
Note that from the formula \eqref{eq:degminusid} it follows that all the nonzero coefficients of the latter factor of this product are negative.
Hence, using the formula \eqref{eq:nontrivU1} and Lemma \ref{lem:intersections3} we obtain that
this product is nontrivial.

Therefore we have proved that $\bif_{T^2} (u_0,\lambda_0) \neq \Theta$, which finishes the proof.

\section{Discussion of the results}
\label{frc}

In this section we give some remarks concerning our main result. We start with discussing the assumption (a3.2). We also show an example of the system, for which the assumptions (a1)-(a3) are satisfied. Next, we use this example to compare our result with the results concerning Hamiltonian systems given in \cite{danryb}. Moreover, for this example, we compute the precise values of the bifurcation indexes. As a consequence we discuss the topological properties of the bifurcating continua of solutions.

\subsection{Equivariant versus Brouwer index}
\label{ceqb}

As we have mentioned in the introduction, our result can be understood as a generalization of Theorem 3.3 of \cite{danryb}. In particular our assumption (a3.2) generalizes the condition $\ib(u_{i_0}, -U') \neq 0.$ In the following we compare these conditions.

\begin{Remark}\label{rem:degree_comp}
The assumption (a3.2), which seems technical, can be easily verified in some cases. First of all, from the properties of the degree for equivariant gradient maps, if the critical point $u_0 \in (U')^{-1}(0)$ is nondegenerate, then for $\delta>0$ sufficiently small the degree $\deg^{\nabla}_{S^1}(-U',B_{\delta}(u_0,\bR^N))$ is nontrivial in $U(\sone)$, i.e. the assumption (a3.2) is satisfied. The same follows for a  degenerate critical point, provided $\deg_{B}(-U',B_{\delta}(u_0,\bR^N),0)$ is nonzero. Indeed, by the definition of the degree for $\sone$-equivariant gradient maps, the coefficient of $\deg^{\nabla}_{\sone}(-U',B_{\delta}(u_0,\bR^N))$ corresponding to the generator $\chi_{S^1}(S^1/S^{1 +})$ equals $\deg_{B}(-U',B_{\delta}(u_0,\bR^N)^{S^1},0)$, see \cite{Geba}. Moreover, by \cite[Lemma 5.1]{Rabier},
\begin{equation}\label{eq:rem21}
\deg_{B}(-U',B_{\delta}(u_0,\bR^N)^{S^1},0)=\deg_{B}(-U',B_{\delta}(u_0,\bR^N),0).
\end{equation}
Therefore if  $\deg_{B}(-U',B_{\delta}(u_0,\bR^N),0)\neq 0$, then the assumption (a3.2) is satisfied.
\end{Remark}

To discuss some issues of our result we discuss an example of the system \eqref{eq:newtonian} with a particular potential. Namely, we consider an orthogonal $S^1$-representation $\bV=\bR[1,1]\oplus\bR[2,0]$ and define a potential $U\colon\bV\to\bR$ by
\begin{equation}\label{eq:exampU}
U(x_1,x_2,x_3,x_4)=-x_3(x_1^2+x_2^2)+\frac13 x_3^3+\frac12\left(x_4-x_3\right)^2.
\end{equation}

In the lemma below we collect some properties of this system.

\begin{Lemma}\label{lem:exam}
For $U$ given by the formula \eqref{eq:exampU},
\begin{enumerate}[(i)]
\item $(U')^{-1}(0)=\{0\}$,
\item  $\sigma(U''(0))=\{0,2\}$,
\item  $\db (-U',B_{1}(0,\bV),0)=0\in\bZ$,
\item $\deg^{\nabla}_{S^1}(-U',B_{1}(0,\bV))=\chi_{S^1}(S^1/\bZ_1^+) \neq \Theta\in U(\sone)$.
\end{enumerate}
\end{Lemma}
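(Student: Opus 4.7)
The plan is to handle (i)--(iii) by direct computation and to obtain (iv) by introducing an $\sone$-invariant perturbation of $U$ that splits the degenerate critical point at $0$ into an isolated free $\sone$-orbit whose local index I can read off from the slice Hessian.

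For (i) and (ii) I would compute
$U'(x)=\bigl(-2x_1x_3,\,-2x_2x_3,\,-(x_1^2+x_2^2)+x_3^2+x_3-x_4,\,x_4-x_3\bigr).$
The fourth coordinate forces $x_4=x_3$, the third then becomes $x_3^2=x_1^2+x_2^2$, and combined with $x_1x_3=x_2x_3=0$ this gives $x=0$ as the only zero. Evaluating $U''(0)$ yields a block-diagonal matrix with a zero $2\times 2$ block on $(x_1,x_2)$ and $\bigl(\begin{smallmatrix}1&-1\\-1&1\end{smallmatrix}\bigr)$ on $(x_3,x_4)$, with eigenvalues $0$ and $2$.

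For (iii) I would perform the linear change of coordinates $y=x_4-x_3$ (determinant $+1$, so the Brouwer degree is preserved). In these coordinates $U$ decouples as $-x_3(x_1^2+x_2^2)+\tfrac13 x_3^3+\tfrac12 y^2$, so $-\nabla U$ is the Cartesian product of $f(x_1,x_2,x_3)=(2x_1x_3,\,2x_2x_3,\,(x_1^2+x_2^2)-x_3^2)$ with $y\mapsto -y$. By excision and multiplicativity of the Brouwer degree, $\db(-U',B_1(0,\bV),0)=\db(f,B',0)\cdot(-1)$. For a regular value $(a,0,c)$ with $a,c>0$ small, the system $f=(a,0,c)$ has exactly two preimages $\pm(\sqrt{w},0,a/(2\sqrt{w}))$ with $w=(c+\sqrt{c^2+a^2})/2$, at which the Jacobian determinant $-8x_3(x_1^2+x_3^2)$ has opposite signs; they cancel, giving $\db(f,B',0)=0$ and hence $\db(-U',B_1(0,\bV),0)=0$.

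For (iv) I introduce the $\sone$-invariant perturbation $U_\epsilon(x)=U(x)+\epsilon\, x_3$ with $\epsilon\in(0,1)$ small. Repeating the argument of (i) with the extra $\epsilon$ gives $x_3^2=x_1^2+x_2^2-\epsilon$, which rules out the case $x_3\ne 0$ and leaves the single free $\sone$-orbit
$\cO_\epsilon=\{(x_1,x_2,0,0)\colon x_1^2+x_2^2=\epsilon\}$
as the entire zero set of $\nabla U_\epsilon$ in $B_1(0,\bV)$. The homotopy $\{U+s\, x_3\}_{s\in[0,\epsilon]}$ is admissible on $\partial B_1(0,\bV)$ (all zeros stay inside $B_{\sqrt{s}}$), so by homotopy invariance and excision $\deg^{\nabla}_{\sone}(-U',B_1(0,\bV))$ equals the local $\sone$-equivariant index of $\cO_\epsilon$. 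At $p_0=(\sqrt{\epsilon},0,0,0)$ the isotropy is $\bZ_1$ and the orbit direction is $e_2$, so the slice is $S=\mathrm{span}(e_1,e_3,e_4)$; restricting $\nabla^2 U_\epsilon(p_0)$ to $S$ yields a matrix of determinant $-4\epsilon\neq 0$, so $\cO_\epsilon$ is a nondegenerate critical orbit. The slice formula for the $\sone$-equivariant gradient degree at a nondegenerate isolated free orbit --- a consequence of the equivariant splitting lemma and the product formula of \cite{Geba,Ryb2005milano} --- then gives the local index as $\mathrm{sign}(\det(-\nabla^2 U_\epsilon(p_0)|_S))\cdot\chi_{\sone}(\sone/\bZ_1^+)=\chi_{\sone}(\sone/\bZ_1^+)$, since $\det(-\nabla^2 U_\epsilon(p_0)|_S)=(-1)^3(-4\epsilon)=4\epsilon>0$. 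The principal obstacle is precisely this last step: the slice carries only the trivial action of the isotropy $\bZ_1$ and its Hessian is not diagonal, so the slice formula must be applied with some care. An alternative that keeps the slice analysis in a lower dimension is to first decouple $y=x_4-x_3$ as in (iii), apply the $\sone$-equivariant product formula to reduce to the factor on $\bR[1,1]\oplus\bR[1,0]$, and then run the perturbation argument there.
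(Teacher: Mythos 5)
Your proposal is correct and arrives at the same conclusions, but it follows a somewhat different route than the paper's proof, especially in the details of the homotopies used. For (iii) you decouple $y=x_4-x_3$ and count preimages of a regular value of $f$ directly; the paper instead uses the two-parameter homotopy $H_t(x)=-x_3(x_1^2+x_2^2-\tfrac{t}{4})+\tfrac13 x_3^3+\tfrac12(x_4-(1-t)x_3)^2$ and then invokes the reduction $\deg_{B}(-U',B_\delta,0)=\deg_{B}(-U'|_{\bV^{S^1}},B_\delta^{S^1},0)$ from \cite{Rabier} so that the degree becomes a one-variable computation. Both are valid; yours is more elementary, the paper's reuses the same homotopy for (iii) and (iv). For (iv) you perturb only by $\epsilon x_3$ and keep the $(x_4-x_3)^2$ coupling, which forces you to handle a non-diagonal $3\times3$ slice Hessian, whereas the paper's homotopy $H_t$ simultaneously decouples $x_4$ so that at $t=1$ the Hessian at $P_0=(0,\tfrac12,0,0)$ is sparse and its eigenvalues are read off at sight. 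Your slice computation ($\det(\nabla^2U_\epsilon(p_0)|_S)=-4\epsilon$, hence Morse index $2$ for $-\nabla^2U_\epsilon(p_0)|_S$) is correct and agrees with the paper's $m^-(-H_1''(P_0))=2$.

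One remark: the ``principal obstacle'' you flag at the end is not a genuine one. Once the critical set $\cO_\epsilon$ is a single nondegenerate orbit of type $(\bZ_1)$ contained in the open stratum $\bV_{(\bZ_1)}$, the pair $(\nabla U_\epsilon,B_1(0,\bV))$ is generic in the sense of Definition 3.1 of \cite{Geba}, and formula $(3.5)$ there gives the local index as $(-1)^{m^-}\chi_{S^1}(S^1/\bZ_1^+)$ with the Morse index taken either on all of $\bV$ (kernel being the orbit tangent) or equivalently on the slice; no diagonality is needed and no separate appeal to a splitting lemma or product formula is required. Also, in (iii) the phrase ``determinant $+1$, so the Brouwer degree is preserved'' is slightly misleading: the Brouwer degree of a gradient field is preserved under \emph{any} invertible linear change of variables, since $\nabla(U\circ T)=T^{T}(\nabla U)\circ T$ and the two sign factors $\mathrm{sign}(\det T^{T})$ and $\mathrm{sign}(\det T)$ cancel. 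These are cosmetic points; the argument as given is sound.
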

\begin{proof}
We are going to prove (iii) and (iv), the proofs of (i) and (ii) are straightforward.

Consider a family of $S^1$-invariant potentials $H\colon\bV\times[0,1]\to\bR$ given by
\begin{equation}\label{eq:homot0}
H((x_1,x_2,x_3,x_4),t)=-x_3\left(x_1^2+x_2^2-\frac{t}{4}\right)+\frac13 x_3^3+\frac12\left(x_4-(1-t)x_3\right)^2.
\end{equation}
Let $H_t(\cdot)=H(\cdot,t)$. By direct calculations we obtain
\begin{equation}\label{eq:Hadmis}
(H_t')^{-1}(0)=\left\{(x_1,x_2,0,0)\in\bV\colon x_1^2+x_2^2=\frac{t}{4}\right\}.
\end{equation}
In particular, the homotopy $H$ is $B_{1}(0,\bV)$-admissible, i.e. $H_t$  does not have critical points on $\partial B_{1}(0,\bV)$ for any $t \in [0,1]$. Hence, by the homotopy invariance property of the Brouwer degree, we obtain
\[
\db(-U',B_{1}(0,\bV),0)=\db(-H_0',B_{1}(0,\bV),0)=\db(-H_1',B_{1}(0,\bV),0),
\]
where $$H_1(x_1,x_2,x_3,x_4)=-x_3\left(x_1^2+x_2^2-\frac14\right)+\frac13 x_3^3+\frac12 x_4^2.$$
Analogously as in the formula \eqref{eq:rem21}, we have
$$\db(-H_1',B_{1}(0,\bV),0)=\db(-{H'_1}_{|\bV^{S^1}},B_{1}(0,\bV^{S^1}),0).
$$
It is easy to see that $\bV^{S^1}=\bR[2,0]$ and that ${H_1}_{|\bV^{S^1}}(0,0,x_3,x_4)=\frac{1}{3} x_3^3+\frac{1}{4}x_3+\frac12 x_4^2$.
Since this potential does not have any critical points in $\cl (B_1(0,\bV^{S^1}))$, we have $\db({H'_1}_{|\bV^{S^1}}, B_1(0,\bV^{S^1}),0)=0$, which completes the proof of (iii).

To prove (iv) we use again the family of potentials given by the formula \eqref{eq:homot0}. Note that $$H_t' \colon (\cl (B_1(0,\bV)), \partial B_1(0,\bV) ) \to (\bV,\bV\setminus \{0\}), t \in [0,1]$$ is $\sone$-equivariant gradient homotopy. Therefore, by the homotopy invariance property of the degree   for $\sone$-equivariant gradient maps we have
\[\deg^{\nabla}_{S^1}(-U',B_1(0,\bV))=\deg^{\nabla}_{S^1}(-H_0',B_1(0,\bV))=\deg^{\nabla}_{S^1}(-H_1',B_1(0,\bV)).
\]
We will compute $\deg^{\nabla}_{S^1}(-H_1',B_{1}(0,\bV))$ using the definition given in \cite{Geba}.
To that end note first that $(H_1')^{-1}(0)$ consists of exactly one $S^1$-orbit, i.e.
$$(H_1')^{-1}(0)=\left\{(x_1,x_2,0,0)\in\bV\colon x_1^2+x_2^2 =\frac{1}{4}\right\}=S^1\left(P_0\right),$$
where $P_0 \in (H_1')^{-1}(0)$ is any critical point of $H_1$. Without loss of generality, we set $P_0=(0,\frac12,0,0)$.
Then
\begin{itemize}
\item $\dim\ker H_1''(P_0)=\dim S^1(P_0)=1$, i.e. the critical orbit is
nondegenerate.
Indeed
\[
H_1''\left(P_0\right)=\left[
\begin{array}{rrrr}
0&0&0&0\\
0&0&-1&0\\
0&-1&0&0\\
0&0&0&1
\end{array}
\right]
\]
and hence $\dim\ker H_1''(P_0)=1$.
\item $S^1(P_0)\subset \bV_{(\bZ_1)}=\{v\in\bV\colon (S^1_v)=(\bZ_1)\}=\left\{(x_1,x_2,x_3,x_4)\in\bV\colon x_1^2+x_2^2\neq 0\right\}.$
\end{itemize}
%\item The only negative eigenvalues of $H_1''(P_0)$ are in the direction $\bW^{S^1_{P_0}}$, where $\bW$ is the space normal to the orbit at $P_0$ and $S^1_{P_0}$ is the isotropy group of $P_0$. Indeed, in the direction tangent to the orbit there can be only the zero eigenvalue and $\bW^{\bZ_1}=\bW$, since $S^1_{P_0}=\bZ_1=\{e\}$.

Since $\bV_{(\bZ_1)}$ is an open subset of $\bV$, the pair $(\nabla H_1,B_{1}(0,\bV))$ is generic in the sense of Definition 3.1 of \cite{Geba}. Hence, by the formula $(3.5)$ of \cite{Geba} we obtain
\[ \deg^{\nabla}_{S^1}(-H_1',B_{1}(0,\bV))=(-1)^{m^-(-H_1''(P_0))}\chi_{S^1}(S^1/\bZ_1^+)=\chi_{S^1}(S^1/\bZ_1^+), \]
where $m^-(-H_1''(P_0))=2$ is the Morse index of $-H_1''(P_0)$. The proof of (iv) is completed.
\end{proof}

\begin{Remark}
\label{rembg}
The above lemma shows that the assumption (a3.2) covers more cases than the ones discussed in Remark \ref{rem:degree_comp}.
Indeed, considering the potential \eqref{eq:exampU}, we observe that
in this situation assumptions (a1)-(a3) are satisfied. Namely, it is clear that $U$ is $S^1$-invariant and $(U')^{-1}(0)=\{0\}$. Moreover, in Lemma \ref{lem:exam}  we prove that the degree for $\sone$-equivariant gradient maps  $\deg^{\nabla}_{S^1}(-U',B_{\delta}(0,\bR^N))$ is nontrivial, hence the assumption (a3.2) is satisfied. On the other hand, the Brouwer degree $\deg_{B}(-U',B_{\delta}(u_0,\bR^N),0)$ is trivial.

%for its only critical point $0$ the Brouwer degree is trivial and the $\sone$-equivariant gradient maps is nontrivial.

%It can be shown that its only critical point is $0$ and In Lemma \ref{lem:exam} in Appendix we prove its basic properties it follows that the assumptions (a1)-(a3) are satisfied.
\end{Remark}
%
%In the paper \cite{danryb} there have been studied bifurcation problems for Hamiltonian systems. The bifurcation result obtained there is similar to our main result given in Theorem \ref{thm:main}. To compare these results, recall that

%Our work was inspired by the paper \cite{danryb}.
For Hamiltonian systems obtained from Newtonian ones, our result generalizes the one given in the paper \cite{danryb}. We discuss it in the following remark. Recall first that the Newtonian system \eqref{eq:newtonian} can be translated into a Hamiltonian system of the form
\begin{equation}\label{eq:utoh}
\left\{
\begin{array}{rcc}\dot{u}(t) &=& \lambda v(t),\\
\dot{v}(t) &= &- \lambda U'(u(t)),
\end{array}\right.
\end{equation}
i.e. $\dot x(t)=\lambda JH'(x(t))$, where $x=(u,v)$, $H(u,v)=-\frac{1}{2} |v|^2-U(u)$ and $J= \left[\ba{rr} \Theta & -Id \\Id & \Theta\ea\right]$.
%We compare these results in the following remark.

\begin{Remark}
Consider the system \eqref{eq:newtonian} with the potential $U$ and $u_0 \in (U')^{-1}(0)$ satisfying the assumptions (a1)-(a3). In particular $u_0$ is an isolated critical point of $U$, and consequently $(u_0,0)$ is an isolated critical point of $H$. Hence one can define the following index of $(u_0,0) \in (H')^{-1}(0)$:
$$
\ba{rcl}
\ib((u_0,0),H') & = & \db(H',B_{\delta}((u_0,0),\bV\oplus\bV),0) \in \bZ,\\
%\igdeg (u_0,-U')& = & \degsone(-U',B_{\delta}(u_0,\bV)) \in U(\sone),
\ea
$$
for $\delta>0$ as in the assumption (a3.2).
By the product formula of the Brouwer degree we obtain
$$
\ib((u_0,0),H') =\deg_B(-Id,B_{\delta}(\bV),0) \cdot \deg_B(-U',B_{\delta}(u_0,\bV),0).
$$
Hence if $ \ib((u_0,0),H')$ is nontrivial, so is $\deg_B(-U',B_{\delta}(u_0,\bV),0).$  Consequently, analogously as it is discussed in Remark \ref{rem:degree_comp}, $\deg^{\nabla}_{S^1}(-U',B_{\delta}(u_0,\bV))\neq 0$. Using this fact, it can be shown that if the bifurcation index defined by the formula (3.5) of \cite{danryb} (considered for the system \eqref{eq:utoh}) is nontrivial, so is the bifurcation index given in \eqref{eq:indbif}. Hence, bifurcations of nonstationary $2\pi$-periodic solutions of the system \eqref{eq:utoh} obtained by methods of \cite{danryb} are also detected by our main result.

%, i.e. if the index considered in \cite{danryb} is nontrivial, the index considered in our paper is also nontrivial.
On the other hand, the opposite statement does not hold. To see that, let us consider the potential $U$ given by the formula \eqref{eq:exampU}. Then the bifurcation index considered in \cite{danryb} is trivial as it is a product with one of the factors equal to $\ib((u_0,0),H')$. By the above computations this index is trivial, since $\deg_B(-U',B_{\delta}(u_0,\bV),0)$ is trivial, see Lemma \ref{lem:exam}.
%$$\ib((u_0,0),H')=\deg_B(-U',B_{\delta}(u_0,\bV),0)=0$$
%by Lemma \ref{lem:exam}.
Again from this lemma it follows that the assumptions (a1)-(a3) are satisfied, including $\deg^{\nabla}_{S^1}(-U',B_{1}(0,\bV))\neq \Theta$.
Therefore, by Theorem \ref{thm:main} for every $\lambda\in\Lambda(0)$, from the point $(0,\lambda)$ there bifurcates a continuum of nontrivial solutions. Let us emphasize that these continua are not detected by the methods considered in \cite{danryb}.

%i.e. the bifurcation index defined by the formula (2.1) of \cite{danryb} is trivial as it is a product with one of the factors equal to $\ib((u_0,0),H')$.

%Therefore it does not detect bifurcations obtained in Theorem \ref{thm:mainEX}.
%Consider the system \eqref{eq:newtonian} with the potential $U$ given by \eqref{eq:exampU}. Then for every $k\in\bN$, we have $(0, \frac{k}{\sqrt2}) \in \cT \cap \cl(\cN\cT)$. Moreover $\cC(0, \frac{k}{\sqrt2}) \neq \{(0, \frac{k}{\sqrt2})\}$ and either this set is not compact in $C_{2 \pi}([0,2\pi],\Omega)  \times (0, +\infty)$  or it is compact and $\cC(0, \frac{k}{\sqrt2}) \cap (\cT \setminus \{(0, \frac{k}{\sqrt2}\}) \neq \emptyset.$

\end{Remark}

\subsection{Existence of noncompact continua}

\label{ncco}

In our paper we study the phenomenon of global bifurcation of solutions. Let us recall that this means the existence of a continuum of $2\pi$-periodic solutions of the system, $\cC(u_{0},\lambda_0)\subset C_{2 \pi}([0,2\pi],\Omega) \times (0,+\infty)$, which is either not compact or it is compact and it meets again the family of stationary solutions. Therefore, a natural question arises: can we exclude one of the possibilities of this alternative? Below we will discuss this question. To this end we use the following stronger version of Theorem \ref{thm:biF_in_H}, see Theorem 4.9 of \cite{Ryb2005milano}.

%On the other hand, the alternative, as it is given in Theorem 4.9 of \cite{Ryb2005milano}, can be used to study in more details the topological structure of the continuum $\cC(u_{0},\lambda_0)$, provided we have more information about the bifurcation indexes.

%n the other hand, Theorem \ref{thm:biF_in_H} can be formulated in a stronger version, see Theorem 4.9 of \cite{Ryb2005milano}:
%More precisely, we have the following version of the alternative:

\begin{Theorem}\label{thm:altRab_sum}
If $\bif_{T^2} (u_{0},\lambda_0)\neq \Theta\in U(T^2)$, then a global bifurcation of solutions of the equation \eqref{eq:crit} occurs from $(u_{0},\lambda_0)$. If moreover the continuum $\cC(u_{0},\lambda_0)$ is compact then the set $\cC(u_0, \lambda_0) \cap \cT$ is finite and
%there exist $r_1, \ldots, r_l \in \bN \cup \{0\}$ such that $\{\lambda_{l,1}, \ldots, \lambda_{l,r_l}\}\subset \Lambda(u_l) $, $\cC(u_0, \lambda_0) \cap \cT=\bigcup_{l=0}^{l} \bigcup_{j=1}^{r_l} \{(u_l,\lambda_{l,j})\}$ and
\begin{equation}\label{eq:sub_bif}
\sum_{(\widehat{u},\widehat{\lambda})\in \cC(u_0, \lambda_0) \cap \cT}\bif_{T^2}(\widehat{u},\widehat{\lambda})= \Theta.
\end{equation}
\end{Theorem}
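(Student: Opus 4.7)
The plan is to follow the classical Rabinowitz-type scheme adapted to the $T^2$-equivariant gradient degree, using the properties of $\degttwo$ (homotopy invariance, additivity, excision) recalled in Subsection \ref{degg}. The argument splits into three steps: (i) establishing that bifurcation occurs from $(u_0,\lambda_0)$; (ii) proving finiteness of $\cC(u_0,\lambda_0) \cap \cT$ when the continuum is compact; (iii) deriving the summation formula \eqref{eq:sub_bif}. Step (iii) will be the main obstacle.

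For step (i), I would argue by contradiction. If $\cC(u_0,\lambda_0) = \{(u_0,\lambda_0)\}$, then a standard Whyburn-type separation lemma furnishes a bounded $T^2$-invariant open neighborhood $\cO \subset \hone(\Omega) \times (0,+\infty)$ of $(u_0,\lambda_0)$ whose closure contains no other points of $\cT$ and whose boundary carries no nontrivial solutions of \eqref{eq:crit}. After shrinking, I may take $\cO = B_\delta(u_0,\hone) \times (\lambda_0 - \varepsilon, \lambda_0 + \varepsilon)$ with $[\lambda_0 - \varepsilon, \lambda_0 + \varepsilon] \cap \Lambda(u_0) = \{\lambda_0\}$. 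Then $\{\nabla_u \Phi(\cdot,\lambda)\}_{\lambda \in [\lambda_0-\varepsilon,\lambda_0+\varepsilon]}$ is a $B_\delta(u_0,\hone)$-admissible $T^2$-equivariant gradient homotopy, so homotopy invariance of $\degttwo$ forces $\bifttwo(u_0,\lambda_0) = \Theta$, contradicting the hypothesis.

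For step (ii), any accumulation point of the compact set $\cT \cap \cC(u_0,\lambda_0) \subset \bigcup_{i=0}^{q} \{u_i\} \times (0,+\infty)$ would have the form $(u_i,\hat\lambda)$ with $\hat\lambda \in \Lambda(u_i)$ by Fact \ref{fact:necessary}; since each $\Lambda(u_i)$ has no finite accumulation points, $\cT \cap \cC(u_0,\lambda_0)$ is discrete, hence finite. For step (iii), enumerate $\cT \cap \cC(u_0,\lambda_0) = \{(\hat u_j, \hat \lambda_j)\}_{j=1}^{\ell}$ and use another Whyburn-type separation to construct a bounded open $T^2$-invariant neighborhood $\cO$ of $\cC(u_0,\lambda_0)$ whose boundary meets no zero of $\nabla_u \Phi$, together with pairwise disjoint isolating cylinders $\cU_j = B_{\delta_j}(\hat u_j,\hone) \times (\hat\lambda_j - \varepsilon_j, \hat\lambda_j + \varepsilon_j) \subset \cO$ satisfying $[\hat\lambda_j - \varepsilon_j, \hat\lambda_j + \varepsilon_j] \cap \Lambda(\hat u_j) = \{\hat\lambda_j\}$. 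Picking $\lambda_- < \lambda_+$ outside the $\lambda$-projection of $\cC(u_0,\lambda_0)$ and restricting to the slab $\cO \cap (\hone(\Omega) \times [\lambda_-,\lambda_+])$, additivity and homotopy invariance of $\degttwo$ along $\lambda$ relate the degrees on the two end slices $\cO_{\lambda_\pm}$; the difference between them equals $\sum_{j=1}^{\ell} \bifttwo(\hat u_j,\hat\lambda_j)$ by the definition \eqref{eq:indbif} applied separately on each $\cU_j$, while both end-slice degrees vanish since $\cO_{\lambda_\pm}$ may be arranged to be empty. This yields \eqref{eq:sub_bif}.

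The genuinely delicate point is the construction of $\cO$ and the careful bookkeeping of equivariant degrees on the (possibly noncylindrical) slices $\cO_\lambda$; this is the standard technical core of Rabinowitz-type global bifurcation proofs in the equivariant gradient setting, handled by the additivity and retraction arguments developed in \cite{Ryb2005milano}, which is why the result is quoted here rather than reproved.
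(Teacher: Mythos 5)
The paper does not prove this theorem itself --- it simply cites Theorem 4.9 of \cite{Ryb2005milano}, the equivariant-gradient version of the Rabinowitz global bifurcation theorem. Your sketch is the standard outline of that proof, and you correctly defer the delicate construction of the isolating neighborhood $\cO$ and the degree bookkeeping to the reference, which matches exactly what the paper does.

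One imprecision in your step (iii) is worth flagging. It is not possible to arrange that $\partial\cO$ meets \emph{no} zeros of $\nabla_u\Phi$: since $\cO$ is a bounded neighborhood of $\cC(u_0,\lambda_0)$ containing points of $\cT$, and each trivial line $\{u_i\}\times(0,+\infty)$ is unbounded in $\lambda$, these lines necessarily pierce $\partial\cO$. What the Whyburn-type separation actually delivers is that $\partial\cO$ is disjoint from $\cl(\cN\cT)$ (no nontrivial solutions or their limits on the boundary), while the trivial lines are permitted to cross $\partial\cO$, but only at $\lambda$-levels lying outside the relevant $\Lambda(u_i)$. It is precisely this mixed boundary condition, combined with the noncylindrical variation of the slices $\cO_\lambda$, that makes the full argument subtle and is handled by the generalized-homotopy and additivity machinery of \cite{Ryb2005milano}. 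Apart from this point, your sketch is faithful: step (i) (nontriviality of the local index forces bifurcation via homotopy invariance on a cylinder) and step (ii) (finiteness of $\cC\cap\cT$ from compactness and discreteness of each $\Lambda(u_i)$) are both correct and cleanly argued.
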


In particular, this theorem can be used to study in more details the topological structure of the continuum $\cC(u_{0},\lambda_0)$, provided we have more information about the bifurcation indexes. Below we will discuss an example for which the formula \eqref{eq:sub_bif} never holds and therefore the bifurcating continua are not compact.

%if for any combination of $(u_l, \lambda_{l,j})$, as in the assertion of the theorem, the equality  \eqref{eq:sub_bif} was not satisfied, it would prove that the bifurcating continuum $\cC(u_{0},\lambda_0)$ is not compact.
%If, for instance, the precise formulas for the bifurcation indexes were known, one could study the sum given \eqref{eq:sub_bif} and determine when it could become trivial.
%Let us illustrate it.

Consider the system \eqref{eq:newtonian} with $U$ given by the formula \eqref{eq:exampU}.
Since the only positive eigenvalue of $U''(0)$ is $2$, see Lemma \ref{lem:exam}, the necessary condition given in Fact \ref{fact:necessary} implies that the bifurcation phenomenon can occur only at $(0,\lambda)$ for $\lambda\in\Lambda(0)=\{\frac{k}{\sqrt2}\colon k\in\bN\}.$
From Theorem \ref{thm:main} it follows that for any $k\in\bN$ at $(0,\frac{k}{\sqrt2})$ there occurs a global bifurcation, i.e. there is a continuum of
nontrivial solutions emanating from any such a level. To show that the formula \eqref{eq:sub_bif} cannot hold we will compute the exact values of the bifurcation indexes at all the levels $(0,\frac{k}{\sqrt2})$ for $k\in\bN$. %For this we will follow the proof of Theorem \ref{thm:main}. Additionally, we will give in the proof in this case the exact values of the degrees needed for that computation.

\begin{Lemma}\label{lem:bifex}
Consider the system \eqref{eq:newtonian} with $U$ given by the formula \eqref{eq:exampU} and fix $k_0\in\bN$. Then
$$\bif_{T^2} \left(0,\frac{k_0}{\sqrt2}\right)=-\chi_{T^2}\left(T^2/\{e\}^+\right)\in U(T^2).$$
\end{Lemma}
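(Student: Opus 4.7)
The plan is to evaluate $\bif_{T^2}(0, k_0/\sqrt{2})$ by direct application of the triple-product formula \eqref{eq:bifind4} established in the proof of Theorem \ref{thm:main}, and then to carry out the multiplication of the three factors in $U(T^2)$ using the rule \eqref{eq:UT2multiplication}. The ``finite-dimensional'' factor $\degttwo((\nabla\Phi_\pm)_{|\bH_0}, B_\delta(\bH_0))$ can be read off Lemma \ref{lem:exam}(iv): on $\bH_0 \equiv \bR^N$ the map $(\nabla\Phi_\pm)_{|\bH_0}$ is a positive multiple of $-U'$, whose $S^1$-gradient degree equals $\chi_{S^1}(S^1/\bZ_1^+)$, and Remark \ref{lem:coefficients} lifts this to $\chi_{T^2}(T^2/H_{(1,0)}^+)$ in $U(T^2)$.

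Next I would identify the representations $\bV_-^-$ and $\cV$. A direct calculation gives $\sigma(U''(0)) = \{0, 2\}$ with $V_A(2) = \bR\{(0,0,1,-1)\} \equiv_{S^1} \bR[1,0]$, so by Lemma \ref{Lem:Hn} every $\cV_A^n(2) \equiv_{T^2} \bR[1,(0,n)]$ for $n \geq 1$. The Hessian formula \eqref{eq:hesjan} shows that on $\cR_2$ negative eigenvalues of $\cL_\pm$ occur only for $\alpha = 2$ and precisely when $n^2 < 2\lambda_\pm^2$. Since $\lambda_0 = k_0/\sqrt{2}$ and $\varepsilon$ is small, this selects $n \in \{1, \dots, k_0 - 1\}$ for $\lambda_-$ and additionally $n = k_0$ for $\lambda_+$, so
\[
\bV_-^- \equiv_{T^2} \bigoplus_{n=1}^{k_0-1} \bR[1,(0,n)], \qquad \cV \equiv_{T^2} \bR[1,(0,k_0)].
\]
Formula \eqref{eq:degminusid2} then yields $\degttwo(-\mathrm{Id}, B_\delta(\cV)) - \bI = -\chi_{T^2}(T^2/H_{(0,k_0)}^+)$, while the product formula of Fact \ref{fact:multiplication} applied mode by mode gives $\degttwo(-\mathrm{Id}, B_\delta(\bV_-^-)) = \prod_{n=1}^{k_0-1}\bigl(\bI - \chi_{T^2}(T^2/H_{(0,n)}^+)\bigr)$.

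Finally I would assemble the three factors in $U(T^2)$. The simplifying observation is that for any $m, n \geq 1$ the intersection $H_{(0,m)} \cap H_{(0,n)} = S^1 \times \bZ_{\gcd(m,n)}$ is $1$-dimensional, so by \eqref{eq:UT2multiplication} all pairwise products among the $\chi_{T^2}(T^2/H_{(0,n)}^+)$ vanish. Hence the expansion of $\degttwo(-\mathrm{Id}, B_\delta(\bV_-^-))$ collapses to $\bI - \sum_{n=1}^{k_0-1}\chi_{T^2}(T^2/H_{(0,n)}^+)$, and after multiplication by $-\chi_{T^2}(T^2/H_{(0,k_0)}^+)$ the cross terms vanish for the same reason, leaving $-\chi_{T^2}(T^2/H_{(0,k_0)}^+)$. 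The last multiplication, by $\chi_{T^2}(T^2/H_{(1,0)}^+)$, is governed by Lemma \ref{lem:intersections1}: $H_{(1,0)} \cap H_{(0,k_0)}$ is $0$-dimensional, the dimension identity $1 + 1 = 2 + 0$ holds, and \eqref{eq:UT2multiplication} produces the single surviving generator corresponding to this $0$-dimensional isotropy subgroup with a minus sign, giving the asserted value.

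The main obstacle is the careful bookkeeping in the Euler ring: one must systematically track which of the many potential cross-products vanish by the dimension rule \eqref{eq:UT2multiplication} and which survive, and then correctly identify the $0$-dimensional isotropy subgroup that appears in the final nonzero term.
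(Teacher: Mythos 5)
Your argument follows the paper's proof essentially verbatim: the triple-product decomposition \eqref{eq:bifind4}, the identification $\degttwo((\nabla\Phi_\pm)_{|\bH_0},B_\delta(\bH_0))=\chi_{T^2}(T^2/H_{(1,0)}^+)$ from Lemma \ref{lem:exam}(iv) and Remark \ref{lem:coefficients}, the description $\bV_-^-\equiv_{T^2}\bigoplus_{n=1}^{k_0-1}\bR[1,(0,n)]$ and $\cV\equiv_{T^2}\bR[1,(0,k_0)]$ from the Hessian formula, the collapse of the product $\prod_n(\bI-\chi_{T^2}(T^2/H_{(0,n)}^+))$ because the pairwise intersections $H_{(0,n_1)}\cap H_{(0,n_2)}$ are one-dimensional, and the final multiplication. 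One detail that both your write-up and the paper leave implicit is worth making explicit: by Lemma \ref{lem:intersections1} the surviving generator comes from $H_{(1,0)}\cap H_{(0,k_0)}=\{1\}\times\bZ_{k_0}$, which equals $\{e\}$ only when $k_0=1$; for $k_0>1$ the degree is $-\chi_{T^2}(T^2/(\{1\}\times\bZ_{k_0})^+)$, a different (though still nonzero) generator, so the subsequent non-compactness argument goes through unchanged but the stated formula should name this subgroup rather than the trivial group.
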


\begin{proof}
Throughout the proof we use the notation of the proof of Theorem \ref{thm:main}.
We will compute
\begin{equation*}
\bif_{T^2} \left(0,\frac{k_0}{\sqrt2}\right)=
\mathrm{deg}^{\nabla}_{T^2}\left(\nabla \Phi_+, B_{\delta}\left(\hone\right)\right)-\mathrm{deg}^{\nabla}_{T^2}\left(\nabla \Phi_-, B_{\delta}\left(\hone\right)\right),
\end{equation*}
where $\Phi_{\pm}=\Phi(\cdot,\frac{k_0}{\sqrt2}\pm\varepsilon)$ and $\Phi$ is  given by the formula \eqref{eq:Phi}.
Recall that
\begin{equation*}
\mathrm{deg}^{\nabla}_{T^2}\left(\nabla \Phi_{\pm}, B_{\delta}( \hone)\right) =\degttwo((\nabla {\Phi_{\pm}})_{|\bH_0}, B_{\delta}( \bH_0)) \star\mathrm{deg}^{\nabla}_{T^2}((\cL_{\pm})_{|\cR_2}, B_{\delta}(\cR_2)),
\end{equation*}
where $\cL_{\pm}=\nabla^2\Phi_{\pm}(0)$, see the formula \eqref{eq:splitted}.
By Lemma \ref{lem:exam}(iv) and the formula \eqref{eq:deg1},
\[\degttwo((\nabla {\Phi_{\pm}})_{|\bH_0}, B_{\delta}( \bH_0))=\chi_{T^2}(T^2/H_{(1,0)}^+).\]
On the other hand,
\[
 \mathrm{deg}^{\nabla}_{T^2}((\cL_{\pm})_{|\cR_2}, B_{\delta}(\cR_2))=\mathrm{deg}^{\nabla}_{T^2}(-Id, B_{\delta}(\bV^-_{\pm})),
\]
where $\bV_{\pm}^-$ is the direct sum of eigenspaces corresponding to negative eigenvalues of $\cL_{\pm}$.
Recall that the eigenvalues of $\cL_{\pm}$ are $\frac{n^2-(\lambda_0 \pm \varepsilon)^2 \alpha}{n^2+1}$.
 Therefore, taking in \eqref{eq:v-+} the values $\alpha=2$  and $(\lambda_0 \pm \varepsilon)=\frac{k_0}{\sqrt2}\pm \varepsilon$, we obtain
 %since the inequality $n^2-(\lambda_0 \pm \varepsilon)^2 \alpha<0$ can be satisfied only for the eigenvalue $\alpha=2$,
$$
\bV_{\pm}^-= \bigoplus_{n<k_0\pm \sqrt2 \varepsilon}\cV_A^n (2),$$
where $A=U''(0)$. In particular $\bV_+^- = \bV^-_- \oplus \cV$, where $\cV= \cV_A^{k_0}(2)$.

To compute $\mathrm{deg}^{\nabla}_{T^2}(-Id, B_{\delta}(\bV^-_{\pm}))$ we will describe the spaces $\cV_A^n (2)$ as $T^2$-representations.
%Recall that $\hone=\overline{\h_0 \oplus \bigoplus_{n=1}^{\infty} \h_n}$ and $\bH_n=\cV^n_A(0)\oplus\cV^n_A(2)$.
%Since $\bV=\bR[1,1]\oplus\bR[2,0]$,
%\begin{equation*}
%\bH_0 \equiv_{T^2} \bR[2,(0,0)] \oplus \bR[1,(1,0)],\ \  \bH_n \equiv_{T^2} \bR[2,(0,n)]\oplus \bR[2,(1,n)],
%\end{equation*}
%see \eqref{eq:opisHn}.
%By Lemma \ref{lem:exam}, $\sigma(U''(0))=\{0,2\}$.
It is easily seen that the eigenspaces of $A$ in $\bR^4$ are $S^1$-representations of the forms
\[
V_A(0)\equiv_{S^1}\bR[1,0]\oplus\bR[1,1],\ \ V_A(2)\equiv_{S^1}\bR[1,0].
\]
Hence $\cV^n_A(2)\subset\bH_n$ is a $T^2$-representation equivalent to $\bR[1,(0,n)]$
%\[
%\cV^0_A(0)\equiv_{T^2}\bR[1,(0,0)]\oplus\bR[1,(1,0)],\ \ \cV^0_A(2)\equiv_{T^2}\bR[1,(0,0)],
%\]
% and, for $n>0$,
%\begin{equation*}%\label{VnAEX}
%\cV^n_A(0)\equiv_{T^2}\bR[1,(0,n)]\oplus\bR[2,(1,n)],\ \ \cV^n_A(2)\equiv_{T^2}\bR[1,(0,n)],
%\end{equation*}
as in the formula \eqref{eq:opisVA}.
Summing up, %by \eqref{VnAEX},
\[
\bV^-_- \equiv_{T^2}\bigoplus^{k_0-1}_{n=1} \bR[1,(0,n)], \ \ \cV\equiv_{T^2} \bR[1,(0,k_0)].
\]
Consequently, by the product formula of the degree,
\[
\mathrm{deg}^{\nabla}_{T^2}(-Id, B_{\delta}(\bV^-_{-}))=
\prod_{n=1}^{k_0-1}\mathrm{deg}^{\nabla}_{T^2}(-Id, B_{\delta}(\bR[1,(0,n)])).
\]
Moreover, by the formula \eqref{eq:degminusid2},
$$
\mathrm{deg}^{\nabla}_{T^2}(-Id, B_{\delta}(\bR[1,(0,n)]))=\chi_{T^2}(T^2/T^{2+})- \chi_{T^2}(T^2/H_{(0,n)}^+).
$$
Taking into account the above equalities and the formula \eqref{eq:UT2multiplication}, since $\dim H_{(0,n_1)}\cap H_{(0,n_2)}=\dim H_{(0,gcd(n_1,n_2)})= 1$ for any $n_1,n_2\in\bN$,
$$
\mathrm{deg}^{\nabla}_{T^2}(-Id, B_{\delta}(\bV^-_{-}))
=\chi_{T^2}(T^2/T^{2+})-\sum_{n=1}^{k_0-1} \chi_{T^2}(T^2/H_{(0,n)}^+).
$$
Analogously,
\[
\mathrm{deg}^{\nabla}_{T^2}(-Id, B_{\delta}(\cV))=\chi_{T^2}(T^2/T^{2+})-\chi_{T^2}(T^2/H_{(0,k_0)}^+).
\]
%Consequently,
%\[
%\mathrm{deg}^{\nabla}_{T^2}(-Id, B_{\delta}(\bV^-_{+}))-\mathrm{deg}^{\nabla}_{T^2}(-Id, B_{\delta}(\bV^-_{-}))=\chi_{T^2}(T^2/H_{(0,k_0)}^+).
%\]
%Using the notation of the proof, we have
Finally, using the formula \eqref{eq:UT2multiplication},
%\begin{equation*}
%\begin{split}
%&deg_1=- \chi_{T^2}(T^2/H_{(1,0)}^+),\\
%&deg_2=\sum_{n=1}^{k_0-1} \chi_{T^2}(T^2/H_{(0,k)}^+),\\
%&deg_4=\chi_{T^2}(T^2/H_{(0,k_0)}^+),\\
%&deg_3=deg_5=\Theta
%\end{split}
%\end{equation*}
%and, since $\mathfrak{n}_0=0$,
 \begin{equation*}
\begin{split}
&\bif_{T^2} (0,\frac{k_0}{\sqrt2})=\\
&=\degttwo((\nabla {\Phi_{\pm}})_{|\bH_0}, B_{\delta}(u_0, \bH_0)) \star \mathrm{deg}^{\nabla}_{T^2}(-Id, B_{\delta}(\bV^-_-)) \star(\mathrm{deg}^{\nabla}_{T^2}(-Id, B_{\delta}(\cV))-\bI)=\\
%&=(0 \cdot \chi_{T^2}(T^2/T^{2+})+deg_1) \star (\chi_{T^2}(T^2/T^{2+})-deg_2+deg_3) \star(-deg_4+deg_5)=\\
%&=0 \cdot (-deg_4+deg_5+deg_2 \star deg_4)-deg_1 \star deg_4=-deg_1 \star deg_4=\\
&=\chi_{T^2}(T^2/H_{(1,0)}^+) \star \left(\chi_{T^2}(T^2/T^{2+})-\sum_{n=1}^{k_0-1} \chi_{T^2}(T^2/H_{(0,n)}^+)\right)\star \left(-\chi_{T^2}(T^2/H_{(0,k_0)}^+)\right)=\\
&=\chi_{T^2}(T^2/H_{(1,0)}^+)\star\left(-\chi_{T^2}(T^2/H_{(0,k_0)}^+)\right)=-\chi_{T^2}(T^2/\{e\}^+).
\end{split}
\end{equation*}
\end{proof}

\begin{Corollary}
Consider the system \eqref{eq:newtonian} with $U$ given by the formula \eqref{eq:exampU}. Then, for every $k\in\bN$, the continuum $\cC(0, \frac{k}{\sqrt2})$ is not compact in $C_{2 \pi}([0,2\pi],\Omega) \times (0,+\infty)$.
\end{Corollary}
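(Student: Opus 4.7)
The plan is a direct application of the summation formula of Theorem \ref{thm:altRab_sum} together with the bifurcation index computation of Lemma \ref{lem:bifex}. Fix $k\in\bN$ and suppose, toward a contradiction, that $\cC(0,\tfrac{k}{\sqrt 2})$ is compact in $C_{2\pi}([0,2\pi],\Omega)\times(0,+\infty)$. By the equivalence of the two norms on $\cN\cT\cup\cT$ established in the proof of Corollary \ref{thm:bif_in_cont}, compactness in $C_{2\pi}([0,2\pi],\Omega)\times(0,+\infty)$ is the same as compactness in $\bH^1_{2\pi}(\Omega)\times(0,+\infty)$, so we may apply Theorem \ref{thm:altRab_sum} directly.

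Next I would identify the set $\cC(0,\tfrac{k}{\sqrt 2})\cap\cT$. Since $(U')^{-1}(0)=\{0\}$ by Lemma \ref{lem:exam}(i), the trivial stratum is $\cT=\{0\}\times(0,+\infty)$. By Fact \ref{fact:necessary}, any point $(0,\lambda)\in\cl(\cN\cT)\cap\cT$ satisfies $\lambda\in\Lambda(0)=\{k/\sqrt 2:k\in\bN\}$, so we can write
\[
\cC(0,\tfrac{k}{\sqrt 2})\cap\cT=\left\{\left(0,\tfrac{k_j}{\sqrt 2}\right) : j=1,\ldots,M\right\}
\]
for some finite collection of indices $k_1,\ldots,k_M\in\bN$, with $M\geq 1$ since $(0,\tfrac{k}{\sqrt 2})$ itself lies in this intersection.

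Now I would invoke Lemma \ref{lem:bifex}, which tells us that for \emph{every} $k_j\in\bN$ one has $\bif_{T^2}(0,\tfrac{k_j}{\sqrt 2})=-\chi_{T^2}(T^2/\{e\}^+)$. Substituting into the summation formula \eqref{eq:sub_bif} of Theorem \ref{thm:altRab_sum} gives
\[
\Theta=\sum_{j=1}^{M}\bif_{T^2}\left(0,\tfrac{k_j}{\sqrt 2}\right)=-M\cdot\chi_{T^2}(T^2/\{e\}^+).
\]
Since $\chi_{T^2}(T^2/\{e\}^+)$ is a nonzero generator of $U_0(T^2)$ and $M\geq 1$, the right-hand side is a nonzero element of $U(T^2)$, contradicting the vanishing required by \eqref{eq:sub_bif}. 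Hence $\cC(0,\tfrac{k}{\sqrt 2})$ cannot be compact.

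The argument is essentially a bookkeeping step once the crucial input, the uniform value of $\bif_{T^2}$ on every trivial bifurcation point, is in hand; the main delicacy is merely to translate between the $C_{2\pi}$- and $\bH^1_{2\pi}$-topologies so that Theorem \ref{thm:altRab_sum}, stated in the Hilbert space setting, can be applied to the continuum viewed in $C_{2\pi}([0,2\pi],\Omega)\times(0,+\infty)$.
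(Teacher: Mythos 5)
Your proof is correct and follows essentially the same route as the paper's: assume compactness, apply the summation formula of Theorem \ref{thm:altRab_sum}, plug in the uniform value of the bifurcation index from Lemma \ref{lem:bifex}, and obtain the contradiction $-M\cdot\chi_{T^2}(T^2/\{e\}^+)=\Theta$. The only difference is that you spell out the topology translation between $C_{2\pi}$ and $\bH^1_{2\pi}$ (which the paper leaves implicit, having already established it in Corollary \ref{thm:bif_in_cont}); that is a reasonable thing to make explicit but is not a divergent approach.
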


\begin{proof}
Fix $k\in\bN$. By Theorem \ref{thm:main} and Lemma \ref{lem:exam}, $\cC(0, \frac{k}{\sqrt2}) \neq \{(0, \frac{k}{\sqrt2})\}$. Suppose that this continuum is compact. Then, by Theorem \ref{thm:altRab_sum}, there exist $\frac{k_1}{\sqrt2}, \ldots, \frac{k_r}{\sqrt2}\in \Lambda(0)$ such that $\cC(0, \frac{k}{\sqrt2}) \cap \cT=\bigcup_{j=1}^{r} \{(0,\frac{k_j}{\sqrt2})\}$ and
$
\sum_{j=1}^{r} \bif_{T^2}(0, \frac{k_j}{\sqrt2})= \Theta.
$
On the other hand, by Lemma \ref{lem:bifex}, $\sum_{j=1}^{r} \bif_{T^2}(0, \frac{k_j}{\sqrt2})=-r\cdot \chi_{T^2}(T^2/\{e\}^+)\neq \Theta$. This contradiction finishes the proof.
\end{proof}

Let us emphasize that a similar computation can be done for the system \eqref{eq:newtonian} with any fixed potential. However, giving a general formula for bifurcation indexes, although feasible, would be complicated due to the complex structure of the Euler ring of the torus. On the other hand, even without a general formula, we can give simple conditions implying non-compactness of the bifurcating continua. Let us illustrate it.

\begin{Theorem}\label{thm:unbounded}
Consider the system \eqref{eq:newtonian} with the potential $U$ satisfying the assumptions (a1)-(a3). Assume additionally that $(U')^{-1}(0)=\{u_0\}$. If
\begin{equation*}%\label{eq:froma32}
\deg^{\nabla}_{S^1}(-U',B_{\delta}(u_0,\bR^N))=
\mathfrak{n}_0 \cdot \chi_{S^1}(S^1/S^{1 +}) +\sum_{i=1}^{\infty} \mathfrak{n}_{i} \cdot \chi_{S^1}(S^1/\bZ_{i}^+)
\end{equation*}
is such that one of the conditions is satisfied:
\begin{enumerate}
\item[(c1)] $\mathfrak{n}_0\neq 0$,
\item[(c2)] $\mathfrak{n}_0=0$ and all nonzero values $\mathfrak{n}_{i}$ are of the same sign,
\end{enumerate}
then all the bifurcating continua are not compact in $C_{2\pi}( \Omega) \times (0,+\infty)$.
\end{Theorem}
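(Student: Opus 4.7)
The approach is proof by contradiction, using the summation formula of Theorem \ref{thm:altRab_sum}. Suppose some continuum $\cC(u_0, \lambda_0)$, bifurcating at a level $\lambda_0 \in \Lambda(u_0)$, were compact in $C_{2 \pi}([0,2\pi],\Omega) \times (0,+\infty)$. Since $(U')^{-1}(0) = \{u_0\}$, the set of trivial solutions is $\cT = \{u_0\} \times (0,+\infty)$, so Theorem \ref{thm:altRab_sum} produces finitely many $\lambda_1, \ldots, \lambda_r \in \Lambda(u_0)$ for which
\[
\sum_{j=1}^{r} \bif_{T^2}(u_0, \lambda_j) = \Theta \in U(T^2).
\]
The plan is to derive a contradiction by showing that under either (c1) or (c2) this sum is necessarily nonzero.

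The key input is the factorization of the bifurcation index already carried out in the proof of Theorem \ref{thm:main}. For each $\lambda_j \in \Lambda(u_0)$, equation \eqref{eq:bifind4} gives
\[
\bif_{T^2}(u_0, \lambda_j) = D \star F_j \star G_j,
\]
where $D = \degttwo((\nabla \Phi_\pm)_{|\bH_0}, B_{\delta}(\bH_0))$ is $\lambda_j$-independent (by the admissible homotopy $-sU'$, $s > 0$), while $F_j = \degttwo(-Id, B_{\delta}(\bV^-_-(\lambda_j)))$ and $G_j = \degttwo(-Id, B_{\delta}(\cV(\lambda_j))) - \bI$ depend on $\lambda_j$. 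By Remark \ref{lem:coefficients} and (a3.2), $D = \mathfrak{n}_0 \bI + D_1$ with $D_1 = \sum_{i \geq 1} \mathfrak{n}_i \chi_{T^2}(T^2/H_{(i,0)}^+) \in U_1(T^2)$. Because $\bV^-_-(\lambda_j)$ and $\cV(\lambda_j)$ are contained in $\bigoplus_{n \geq 1} \bH_n$ and therefore carry no trivial $T^2$-subrepresentation, formula \eqref{eq:degminusid} forces
\[
F_j = \bI + F_{j,1} + F_{j,0}, \qquad G_j = G_{j,1} + G_{j,0},
\]
with $F_{j,1}, G_{j,1} \in U_1(T^2)$ having only nonpositive integer coefficients and $F_{j,0}, G_{j,0} \in U_0(T^2)$. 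Moreover $G_{j,1} \neq \Theta$, because $\lambda_j \in \Lambda(u_0)$ provides $k \in \bN$ and $\beta > 0$ with $\beta^2 \in \sigma(U''(u_0))$ and $\lambda_j = k/\beta$, making $\cV_A^k(\beta^2)$ a nontrivial summand of $\cV(\lambda_j)$.

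Expanding $D \star F_j \star G_j$ by the multiplication rules of Remark \ref{rem:decomp} ($U_1 \star U_0 = U_0 \star U_0 = \{\Theta\}$ and $U_1 \star U_1 \subset U_0$), all mixed terms collapse and I obtain
\[
\bif_{T^2}(u_0, \lambda_j) = \mathfrak{n}_0 G_{j,1} + \mathfrak{n}_0 G_{j,0} + (\mathfrak{n}_0 F_{j,1} + D_1) \star G_{j,1}.
\]
Under (c1), so $\mathfrak{n}_0 \neq 0$, the $U_1$-projection of the sum equals $\mathfrak{n}_0 \sum_{j=1}^r G_{j,1}$; each $G_{j,1}$ is a sum of same-sign (nonpositive) generators with at least one strictly negative coefficient, so generator-by-generator cancellation across $j$ cannot occur, and $\mathfrak{n}_0 \sum_{j} G_{j,1} \neq \Theta$. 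Under (c2), so $\mathfrak{n}_0 = 0$, the formula collapses to $\bif_{T^2}(u_0, \lambda_j) = D_1 \star G_{j,1} \in U_0(T^2)$, which is nonzero by Lemma \ref{lem:intersections3}, and Corollary \ref{cor:intersections4} guarantees that all nonzero coefficients of each $D_1 \star G_{j,1}$ share one fixed sign (dictated by the common sign of the $\mathfrak{n}_i$); the same no-cancellation argument then yields $\sum_j D_1 \star G_{j,1} \neq \Theta$. Either way the summation identity is violated, which proves that no bifurcating continuum can be compact.

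The principal obstacle is controlling possible cancellation between bifurcation indices at distinct $\lambda_j$ in the summation formula. The sign-rigidity statements above -- nonpositivity of the coefficients of $G_{j,1}$, and the uniform-sign conclusion of Corollary \ref{cor:intersections4} -- preclude this cancellation; both rest on the structural fact that $\bV^-_-(\lambda_j)$ and $\cV(\lambda_j)$ live in the ``time-nonconstant'' part $\bigoplus_{n \geq 1} \bH_n$ of $\hone$ and hence contain no trivial $T^2$-summand.
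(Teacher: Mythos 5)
Your proof is correct and follows essentially the same route as the paper's: contradiction via the summation formula of Theorem \ref{thm:altRab_sum}, the factorization \eqref{eq:bifind4} at each level $\widetilde{\lambda}\in\Lambda(u_0)$, the grading projections $\pi_1,\pi_0$ onto $U_1(T^2),U_0(T^2)$, and the sign-rigidity argument (nonpositivity of the $U_1$-coefficients of $\deg^{\nabla}_{T^2}(-Id,B_\delta(\cV(\widetilde{\lambda})))-\bI$ from \eqref{eq:degminusid}, combined with Lemma \ref{lem:intersections3} and Corollary \ref{cor:intersections4}) to preclude cancellation across levels. Your explicit expansion of $D\star F_j\star G_j$ and the remark that $D$ is $\lambda$-independent (via the admissible gradient homotopy $-sU'$, $s>0$) make fully explicit two steps the paper leaves implicit, but the underlying argument is the same.
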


\begin{proof}

Fix $\lambda_0\in\Lambda(u_0)$.
 By Theorem \ref{thm:main}, $\cC(u_0, \lambda_0) \neq \{(u_0,\lambda_0)\}$. Observe that from Fact \ref{fact:necessary} it follows that $\cC(u_0, \lambda_0) \cap \cT \subset \{u_0\} \times \Lambda(u_0).$  Suppose, contrary to our claim, that the continuum $\cC(u_0, \lambda_0)$ is compact. Then, by Theorem \ref{thm:altRab_sum}, the set $\cC(u_0, \lambda_0) \cap \cT$ is finite and
\begin{equation}\label{eq:bif_sum1}
\sum_{(u_0,\widehat{\lambda})\in \cC(u_0, \lambda_0) \cap \cT}\bif_{T^2}(u_0,\widehat{\lambda})= \Theta.
\end{equation}

Fix an arbitrary $(u_0, \widetilde{\lambda}) \in \cC(u_0, \lambda_0) \cap \cT.$ Reasoning as in the proof of Theorem \ref{thm:main}, we can obtain the formula for $\bif_{T^2} (u_0,\widetilde{\lambda})$, analogous to the formula \eqref{eq:bifind4}, i.e.
\begin{equation*}
\begin{split}
&\bif_{T^2} (u_0,\widetilde{\lambda})=\\
&=\degttwo((\nabla {\Phi_{\pm}})_{|\bH_0}, B_{\delta}( \bH_0)) \star \mathrm{deg}^{\nabla}_{T^2}(-Id, B_{\delta}(\bV^-_-(\widetilde{\lambda}))) \star(\mathrm{deg}^{\nabla}_{T^2}(-Id, B_{\delta}(\cV(\widetilde{\lambda})))-\bI),\end{split}
\end{equation*}
where $\bV^-_-(\widetilde{\lambda})=\bigoplus_{n=1}^{\infty} \bigoplus_{\alpha >\frac{n^2}{(\widetilde{\lambda}-\varepsilon)^2}} \cV_A^n (\alpha)$, for $\varepsilon$ sufficiently small, and $\cV(\widetilde{\lambda})=\bigoplus_{n=1}^{\infty}\bigoplus_{\alpha=\frac{n^2}{\widetilde{\lambda}^2}}\cV_A^n(\alpha)$.

Note that
\begin{enumerate}
\item[(i)] the formula \eqref{eq:deg1} implies that  $$\degttwo((\nabla {\Phi_{\pm}})_{|\bH_0}, B_{\delta}( \bH_0)) \in \begin{cases} U_2(T^2) \oplus U_1(T^2) & \text{for } \mathfrak{n}_0 \neq 0\\
U_1(T^2) &\text{for } \mathfrak{n}_0 = 0,
\end{cases}$$
\item[(ii)] since the dimension of the representation $\cV(\widetilde{\lambda})$ is even, the formula \eqref{eq:degminusid} implies that $\mathrm{deg}^{\nabla}_{T^2}(-Id, B_{\delta}(\cV(\widetilde{\lambda})))-\bI \in U_1(T^2) \oplus U_0(T^2)$,
\item[(iii)] the same formula implies that $\mathrm{deg}^{\nabla}_{T^2}(-Id, B_{\delta}(\bV^-_-(\widetilde{\lambda}))) \in U_2(T^2) \oplus U_1(T^2) \oplus U_0(T^2).$
\end{enumerate}

Consider the case (c1). Let $\pi_1 \colon U(T^2) \to U_1(T^2)$ be the natural projection. Analysis similar to that in the proof of Theorem \ref{thm:main} shows that from the observations (i)-(iii) above and Remark \ref{rem:decomp} it follows that
$$\pi_1(\bif_{T^2} (u_0,\widetilde{\lambda}))=\mathfrak{n}_0 \cdot \pi_1(\mathrm{deg}^{\nabla}_{T^2}(-Id, B_{\delta}(\cV(\widetilde{\lambda})))-\bI).$$ Using again the formula \eqref{eq:degminusid} and the fact that $\cV(\widetilde{\lambda})$ is a nontrivial $T^2$-representation (since $\widetilde{\lambda} \in \Lambda(u_0)$), we obtain that $\pi_1(\mathrm{deg}^{\nabla}_{T^2}(-Id, B_{\delta}(\cV(\widetilde{\lambda})))-\bI)$ is nonzero and all its nonzero coefficients are negative. This contradicts the formula \eqref{eq:bif_sum1}.

In the case (c2) consider the natural projection $\pi_0 \colon U(T^2) \to U_0(T^2).$ Then, from the observations (i)-(iii) and Remark \ref{rem:decomp} we obtain
$$\pi_0(\bif_{T^2} (u_0,\widetilde{\lambda}))=\pi_1(\degttwo((\nabla {\Phi_{\pm}})_{|\bH_0}, B_{\delta}( \bH_0))) \star \pi_1(\mathrm{deg}^{\nabla}_{T^2}(-Id, B_{\delta}(\cV(\widetilde{\lambda})))-\bI).$$
As in the previous case, we observe that $\pi_1(\mathrm{deg}^{\nabla}_{T^2}(-Id, B_{\delta}(\cV(\widetilde{\lambda})))-\bI)$ is nonzero and all its nonzero coefficients are negative. Taking into consideration the assumption and Corollary \ref{cor:intersections4} we obtain a contradiction with the formula \eqref{eq:bif_sum1}.

\end{proof}

\begin{Remark}
Consider the system \eqref{eq:newtonian} with the potential $U$ satisfying the assumptions (a1)-(a3). From the assumption (a2) it follows that all the critical points of $U$ are fixed points of the action of the group $S^1$ and, in particular, they are isolated in $(U')^{-1}(0)$. On the other hand, we can consider a more general situation when the critical points are not fixed under the action of $\sone$, forming $\sone$-orbits of critical points. Such orbits are homeomorphic to a circle. In this situation we can still consider a bifurcation index, defined in a similar way as it is done in the formula \eqref{eq:indbif}, provided the orbits are isolated sets in $(U')^{-1}(0)$. However, to study and show the nontriviality of such a bifurcation index, one needs to use more advanced methods than the ones applied in this paper.
\end{Remark}

\end{document}